\newcommand{\fint}{\mathop{\int\makebox(-15,2){\rule[4pt]{.9em}{0.6pt}}\kern-4pt}\nolimits}
\def\XXint#1#2#3{{\setbox0=\hbox{$#1{#2#3}{\int}$}
     \vcenter{\hbox{$#2#3$}}\kern-.5\wd0}}
\def\R{\mathbb R}
\def\RP{\mathbb {RP}}
\def\N{\mathbb N}
\def\Z{\mathbb Z}
\def\H{\mathbb H}
\def\dist{\mathrm{dist}}
\newtheorem{thm}{Theorem}[section]
\newtheorem{lemm}{Lemma}[section]
\newtheorem{cor}{Corollary}[section]
\newtheorem{prop}{Proposition}[section]
\theoremstyle{remark}
\theoremstyle{definition}
\title[]{Conformal currents and the entropy of negatively curved three-manifolds}
\author{Fernando C. Marques and Andr\'e Neves}
\address{Princeton University \\ Fine Hall \\ Princeton NJ 08544 \\ USA}
\email{coda@math.princeton.edu}
\address{University of Chicago \\ Department of Mathematics \\ Chicago IL 60637\\ USA}
\email{aneves@math.uchicago.edu}
\thanks{The first author is partly supported by NSF-DMS-2105557 and a Simons Investigator Grant. The second author is partly supported by NSF-DMS-2005468 and a Simons Investigator Grant.}
\begin{document}

\maketitle

\begin{abstract}
 In this paper, we  describe the intersection  between geodesic and conformal currents  on closed hyperbolic three-manifolds.

We use this to prove some sharp bounds which involve the Liouville entropy of a negatively curved metric, the minimal surface entropy, and the area ratio.  Using these ideas we also give a new proof  of    the Mostow Rigidity Theorem in the three-dimensional case. 
\end{abstract}

\section{Introduction}

Let $\Gamma$ be a  group of isometries which acts  properly discontinuously on the  hyperbolic space $\H^3$, so that $M=\H^3/\Gamma$ is a closed manifold.

 In \cite{bonahon}, Bonahon defined
geodesic currents as   $\Gamma$-invariant Radon measures on the space of  hyperbolic geodesics and, in the two-dimensional case, he defined  the intersection number of geodesic currents. These concepts are  essential objects in the study of geodesics in 
surfaces of negative curvature.  In \cite{labourie}, Labourie defined conformal currents as $\Gamma$-invariant Radon measures  on the space of quasi-circles of $S^2$. 

In this paper, we will use the intersection number between conformal and geodesic currents and derive some applications.

We denote  the space of geodesic currents by $\mathcal{C}(\Gamma)$  and that of  conformal currents by $\mathcal{C}_2(\Gamma)$. The hyperbolic metric on $M$  is denoted by $\overline{g}$ and $g$ denotes a metric   of negative sectional curvature on $M$.

A nontrivial closed curve $\gamma$ in $M$ induces a homotopy class $[\gamma]$ and $l_g([\gamma])$ denotes the least length of curves in $[\gamma]$. If $\Sigma$ is an essential (or incompressible)
surface  in $M$ then it  induces a homotopy class $\Pi$. The surface $\Sigma$ is said to be quasi-Fuchsian if the asymptotic boundary of its lift to $\H^3$ is a quasi-circle. The group $\Gamma$ acts on $S^2=\partial \H^3$ by conformal transformations, and hence preserve the space of quasi-circles. We will denote by ${\rm area}_g(\Pi)$ the least area of surfaces in $\Pi$.

 A map $\Pi \mapsto \delta_\Pi\in {\mathcal{C}_2}(\Gamma)$ was defined in \cite[Section 5.2]{labourie} for homotopy classes $\Pi$ of quasi-Fuchsian surfaces which generalizes the map $[\gamma] \mapsto \delta_{[\gamma]}\in \mathcal{C}(\Gamma)$ of \cite{bonahon}.  There is a Radon measure $\nu$, with support the space of round circles 
of $S^2$, that is invariant by the group $\text{\rm M\"{o}b}(S^2)$ of $\text{\rm M\"{o}bius}$ transformations. These are the maps induced
on $S^2$ by the hyperbolic isometries and hence $\nu\in {\mathcal{C}_2}(\Gamma)$. The negatively curved metric $g$  induces a Liouville current $\lambda_g\in \mathcal{C}(\Gamma)$. These examples of geodesic currents and conformal currents are described in detail in Section \ref{integral.geometry}  and Section \ref{examples.currents}.

The intersection between geodesic and conformal currents can be defined following \cite{bonahon}, and it is a bilinear map
$$I=I_\Gamma:{\mathcal{C}_2}(\Gamma)\times \mathcal C(\Gamma)\rightarrow \mathbb{R}_+.$$
This map was also used in \cite{brody-reyes} to approximate by geometric actions of $\Gamma$  on cube complexes the action of $\Gamma$ on the  hyperbolic space. 

The intersection form $I$ is related to the geometric intersection number.  For instance, $I(\delta_{\Pi},\delta_{[\gamma]})\leq \#(\Sigma\cap \alpha),$
where $\Sigma$ is any immersed surface in $\Pi$ and $\alpha$ is any closed curve in $[\gamma]$ which intersect transversely. If $\Sigma \in \Pi$ is a  totally geodesic surface and $\alpha\in [\gamma]$ is a closed geodesic for the hyperbolic metric which intersect transversely, then $I(\delta_{\Pi},\delta_{[\gamma]})= \#(\Sigma\cap \alpha)$. If $\Sigma$ and $\alpha$ are induced by the immersions $i_\Sigma:\Sigma\rightarrow M$, $i_\alpha:S^1\rightarrow M$, then 
$\# (\Sigma \cap \alpha)$ is the number of  $(x,y)\in \Sigma \times S^1$ such that $i_\Sigma(x)=i_\alpha(y)$. 

We prove:
 \begin{thm}\label{intersection.number.thm} 
For  a negatively curved metric $(M,g)$:
 \begin{itemize}
\item[(i)] $I(\delta_{\Pi},\lambda_g)\leq \pi\, {\rm area}_g(\Pi),$ and if equality holds then $\Pi$ contains a totally geodesic surface in the metric $g$;
\item[(ii)] $ I(\nu,\lambda_g)=2\pi^2{\rm vol}_g(M)i(g,\overline{g})$, where $i(g,\overline{g})$ denotes the geodesic stretch of $\overline{g}$ with respect to $g$ and its Liouville measure.
\end{itemize}
 \end{thm}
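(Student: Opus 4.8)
The plan is to read both parts as integral‑geometric identities for the Liouville current $\lambda_g$, the genuinely new ingredient being the rigidity in the equality case of part (i).

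\emph{Part (i).} Any quasi‑Fuchsian representative $\Sigma\in\Pi$ has a lift $\widetilde\Sigma\subset\H^3$, invariant under the surface subgroup $\Gamma_\Sigma<\Gamma$, asymptotic to a quasi‑circle $C_0=\partial_\infty\widetilde\Sigma$ depending only on $\Pi$, and $\delta_\Pi$ is supported on the $\Gamma$‑orbit of $C_0$. Unwinding the definition of $I$ following Bonahon, $I(\delta_\Pi,\lambda_g)$ is the $\lambda_g$‑measure, computed modulo $\Gamma_\Sigma$, of the set of geodesics whose two endpoints in $S^2$ are separated by $C_0$. Such a geodesic must meet every properly immersed surface asymptotic to $C_0$, so for \emph{every} immersed representative $\Sigma\in\Pi$,
\[
I(\delta_\Pi,\lambda_g)\ \le\ \int \#\bigl(\eta\cap\widetilde\Sigma\bigr)\,d\lambda_g(\eta),
\]
the integral over geodesics modulo $\Gamma_\Sigma$, counted with multiplicity; this is the continuous analogue of the inequality $I(\delta_\Pi,\delta_{[\gamma]})\le\#(\Sigma\cap\alpha)$ recalled above. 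Since $\lambda_g$ is the pushforward of the $g$‑Liouville measure under the quotient by the geodesic flow, the right‑hand side is computed by the Santal\'o formula: the flux of the Liouville measure of a negatively curved three‑manifold across a surface equals $\pi$ times its area, because $\tfrac12\int_{S^2}|\langle v,n\rangle|\,dv=\pi$ for a fixed unit vector $n$. Hence $I(\delta_\Pi,\lambda_g)\le\pi\,\area_g(\Sigma)$, and the infimum over $\Sigma\in\Pi$ gives the bound. For the equality case, take $\Sigma$ to be a least‑area representative (its lift is still asymptotic to $C_0$, the homotopy being bounded). Equality forces equality in the two displayed relations, so $\lambda_g$‑almost every geodesic meets $\widetilde\Sigma$ at most once; if $\Sigma$ were not totally geodesic there would be a point of $\widetilde\Sigma$ with non‑vanishing second fundamental form, near which an open set of geodesics — those entering the concave side nearly tangentially — crosses $\widetilde\Sigma$ twice, and this set has positive $\lambda_g$‑measure modulo $\Gamma_\Sigma$ since $\lambda_g$, being the Liouville current, is equivalent to the smooth measure on geodesics. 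This contradiction forces $\Sigma$ to be totally geodesic in $g$. I expect this equality discussion, together with the $\Gamma_\Sigma$‑quotient bookkeeping when $\Sigma$ is merely immersed, to be the most delicate point of (i).

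\emph{Part (ii).} Identify a round circle $C\subset S^2$ with the totally geodesic plane $P_C\subset\H^3$ it bounds, so that $\nu$ becomes a normalization of the M\"obius‑invariant measure on totally geodesic planes. Two points of $S^2$ are separated by $C$ precisely when the hyperbolic geodesic $\bar\eta$ through them meets $P_C$, and then they meet exactly once (a totally geodesic plane and a geodesic not lying in it meet at most once). Thus on $\mathcal I=\{(C,\eta):\text{the endpoints of }\eta\text{ are separated by }C\}$ the map $\Phi(C,\eta)=P_C\cap\bar\eta\in\H^3$ is well defined and $\Gamma$‑equivariant, and
\[
I(\nu,\lambda_g)\ =\ (\nu\times\lambda_g)(\mathcal I/\Gamma)\ =\ \bigl(\Phi_\ast(\nu\times\lambda_g)\bigr)(M).
\]
Integrating out $\nu$ first and applying Crofton's formula for curves in $\H^3$ gives $\nu(\{C:P_C\cap\bar\eta\in U\})=\kappa\cdot\mathrm{length}_{\overline{g}}(\bar\eta\cap U)$ for a constant $\kappa$ fixed by the normalization of $\nu$ from Section~\ref{examples.currents}; therefore
\[
I(\nu,\lambda_g)\ =\ \kappa\int \mathrm{length}_{\overline{g}}(\bar\eta\cap D)\,d\lambda_g(\eta),
\]
$D$ a fundamental domain for $\Gamma$ — that is, $\kappa$ times the total hyperbolic length, inside $M$, of the $\lambda_g$‑distributed family of geodesics read off along their hyperbolic representatives.

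Finally, this last integral should equal $4\pi\,\mathrm{vol}_g(M)\,i(g,\overline{g})$, up to the normalization constants already fixed. Lifting $\lambda_g$ to $S_{\overline{g}}M$ along the hyperbolic geodesic flow produces a flow‑invariant measure whose total mass is the integral above divided by $\kappa$; comparing it with the $g$‑Liouville measure (whose flow‑quotient is $\lambda_g$) identifies this mass, through the ergodic (Birkhoff) description of $i(g,\overline{g})$ as the $g$‑Liouville average of the $\overline{g}$‑speed $\|v\|_{\overline{g}}$ — equivalently through Knieper's formula for the geodesic stretch — with $4\pi\,\mathrm{vol}_g(M)\,i(g,\overline{g})$. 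The product of the two normalization constants is $2\pi^2$ (as one checks on $g=\overline{g}$, where $i(\overline{g},\overline{g})=1$ and $\nu$, $\lambda_{\overline{g}}$ are directly comparable), whence (ii). The main obstacle here is precisely this last step: matching the hyperbolic length measured along the hyperbolic representative of a geodesic with the one measured along the $g$‑geodesic with the same endpoints, i.e. controlling the discrepancy between a $g$‑geodesic and its hyperbolic straightening — by a bounded‑cobordism/averaging argument, or by invoking the known relation between the geodesic stretch and Liouville currents.
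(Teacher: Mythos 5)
Your treatment of part (i) is essentially the paper's: you unwind $I(\delta_\Pi,\lambda_g)$ as the $\lambda_g$--mass, modulo the surface group, of the set of geodesics linked with the limit set, bound it by $\int\#(\gamma_g(u)\cap\Omega)\,d\lambda_g(u)$ over a fundamental domain $\Omega$ of the lifted surface, invoke the Santal\'o formula \eqref{santalo.formula}, and infimize over representatives; the rigidity case --- a point of nonvanishing second fundamental form produces an open $\lambda_g$--positive set of geodesics crossing twice --- is exactly the paper's argument. No issues there.

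For part (ii) you take a genuinely different route from the paper, and that route has two real gaps. The paper proceeds by \emph{approximation}: Sigmund gives closed geodesics $\gamma_i$ equidistributing for $g$; Proposition~\ref{convergence.geodesic} gives $\tfrac{1}{l_g(\gamma_i)}\delta_{[\gamma_i]}\to\tfrac{1}{2\pi\mathrm{vol}_g(M)}\lambda_g$; continuity of $I$ at $\nu$ (Proposition~\ref{continuity.form}) plus $I(\nu,\delta_{[\gamma]})=\pi\,l_{\overline{g}}([\gamma])$ (Proposition~\ref{length.form}) reduce everything to $\lim_i l_{\overline{g}}([\gamma_i])/l_g(\gamma_i)$, which a lemma of Schapira--Tapie identifies with $i(g,\overline{g})$. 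You instead integrate out $\nu$ directly via Crofton to produce the expression $\pi\int_{\mathcal G}\mathrm{length}_{\overline{g}}(\bar\eta\cap\Delta)\,d\lambda_g(\eta)$, and then want to identify this with a multiple of $\mathrm{vol}_g(M)\,i(g,\overline{g})$. That identification is where the content sits, and your sketch of it is not correct: the geodesic stretch $i(g,\overline{g})$ is \emph{not} the $g$--Liouville average of the pointwise $\overline{g}$--speed $\|v\|_{\overline{g}}$. Since $a(t,v)=d_{\overline{g}}(\gamma_v(0),\gamma_v(t))\le\int_0^t\|\gamma_v'(s)\|_{\overline{g}}\,ds$ with equality only when $g$--geodesics are $\overline{g}$--geodesics, the average of $\|v\|_{\overline{g}}$ is the $t\to 0$ (supremal) value of the subadditive family $\tfrac1t\int a(t,v)\,dV_g$, whereas $i(g,\overline{g})$ is the $t\to\infty$ (infimal) value; in general the former strictly dominates the latter. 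The ``equivalently through Knieper's formula'' clause is doing all the work and cannot be a reformulation of the false statement preceding it: Knieper's formula expresses $i$ as the average of the infinitesimal \emph{time-reparametrization} cocycle $\Psi$ coming from the Busemann--function conjugacy between the two geodesic flows (cf.\ Proposition~\ref{gromov-correspondence} and \eqref{intersection.identity} in the paper), not of $\|v\|_{\overline{g}}$. That is also exactly the mechanism needed to resolve the ``main obstacle'' you flag --- cutting $\bar\eta$ by a fundamental domain downstairs versus cutting $\gamma_g(u)$ by its $\chi$--preimage, and pushing the Crofton integral through \eqref{measure.on.geodesics} --- so your direct approach, done correctly, would end up importing the Section~\ref{mostow.rigidity} conjugacy machinery, which the paper deliberately avoids in the proof of Theorem~\ref{intersection.number.thm}(ii) by routing through closed geodesics.
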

 
 The geodesic stretch was introduced and called intersection by Croke and Fathi  in \cite{croke-fathi}.

\subsection{Applications} 

We describe some consequences of Theorem \ref{intersection.number.thm}.  We say $\Pi\in S_\eta(M)$ if the asymptotic boundary of the lift to $\H^3$ of a surface in $\Pi$ is a $(1+\eta)$-quasi-circle.\medskip

\noindent{\bf Minimal surface entropy.} In \cite{calegari-marques-neves}, Calegari and the authors defined  the {\em minimal surface entropy} $E(g)$ of $(M,g)$ by
\begin{equation}\label{asymp.area}
E(g)=\lim_{\eta\to 0}\liminf_{L\to\infty}4\pi \frac{\ln \#\{\Pi\in {\mathcal S}_{\eta}(M): \text{area}_g(\Pi)\leq L\}}{L\ln L}.
\end{equation} 
In Proposition \ref{alternative.definition} it is proved that  formula (\ref{asymp.area}) is also true by taking the limsup.   It is shown in  \cite{calegari-marques-neves} that $E(\overline{g})=2$ and that, for a metric $g$ with sectional curvature less than or equal to $-1$, $E(g)\geq E(\overline{g})=2$ and equality implies $g$ is hyperbolic.  In \cite{lowe-neves}, it is shown that $E(g)\leq 2$ for any metric $g$ with scalar curvature greater than or equal to $-6$, and equality implies $g$ is a hyperbolic metric. 

Let $\mu$ be a unit Radon measure on  the Grassmannian $Gr_2(M)$ of $M$ that is invariant by the homogeneous action of ${\rm PSL}(2,\mathbb{R})$. One such example is the normalized  Liouville measure  of the metric $\overline{g}$  on $Gr_2(M)$ which we denote by $\overline{\nu}$. The theory of   Ratner-Shah \cite{ratner,shah} classifies those  measures as  linear combinations of $\overline{\nu}$ and measures supported on closed totally geodesic surfaces.

Let $\rho$ be a metric  which topologizes   the space of unit Radon measures on $Gr_2(M)$. There is $\tilde{\eta}>0$ such that if  $\Pi\in S_\eta(M)$, $0<\eta<\tilde{\eta}$, then there is a unique $\Sigma\in \Pi$ such that ${\rm area}_{\overline{g}}(\Sigma)={\rm area}_{\overline{g}}(\Pi)$ (combining \cite{seppi} and \cite[Theorem 3.3]{uhlenbeck}). We set $\Sigma=\Sigma(\Pi)$ and define a unit measure on $Gr_2(M)$ 
$$
\nu_\Pi(\psi)=\frac{1}{\text{area}_{\overline{g}}(\Sigma(\Pi))}\int_{\Sigma(\Pi)}\psi(x,T_x\Sigma(\Pi))\, d\Sigma(\Pi)_{\overline{g}}(x)
$$
 where $\psi:Gr_2(M)\rightarrow\mathbb{R}$ is a continuous function and $T_x\Sigma$ denotes the tangent space to $\Sigma$ at $x$. 

We  define the {\em minimal surface entropy $E_\mu(g)$ for $\mu$}  of $(M,g)$  as being
\begin{equation}\label{entropy.measure}
E_\mu(g)=\lim_{\eta\to 0}\liminf_{L\to\infty}4\pi \frac{\ln \#\{\Pi\in {\mathcal S}_{\eta,\mu}(M): \text{area}_g(\Pi)\leq L\}}{L\ln L},
\end{equation} 
where  $\mathcal{S}_{\eta,\mu}(M)=\{\Pi\in S_\eta(M): \rho(\nu_\Pi,\mu)<\eta\}$. Since the space of unit Radon measures on $Gr_2(M)$ is compact, the definition of $E_\mu(g)$ does not depend on the metric $\rho$. Notice that $E_\mu(g)\leq E(g)$ for any $\mu$.

 We use Theorem \ref{intersection.number.thm} to prove a sharp bound between $E_{\overline{\nu}}$ and $h_L(g)$, the Liouville entropy of the geodesic flow of $g$.

\begin{thm}\label{product.entropy.thm}  Suppose that $g$ is a metric of negative sectional curvature on $M$.  Then 
$$E_{\overline{\nu}}(g)h_L(g)\text{\rm vol}_g(M)\leq 4\text{\rm vol}_{\overline{g}}(M),$$
and if equality holds the metric $g$ has constant sectional curvature.
\end{thm}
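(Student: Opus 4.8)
The plan is to prove Theorem \ref{product.entropy.thm} by combining an inequality for conformal currents, derived from Theorem \ref{intersection.number.thm}, with a classical comparison between Liouville entropy and geodesic stretch. Concretely, I will establish
\[
E_{\overline{\nu}}(g)\,{\rm vol}_g(M)\,i(g,\overline{g})\ \le\ 2\,{\rm vol}_{\overline{g}}(M),
\]
and combine it with the comparison $h_L(g)\le 2\,i(g,\overline{g})$ between the Liouville entropy and the geodesic stretch (here $2=h_L(\overline{g})$ is the Liouville entropy of the hyperbolic metric; this is of the type studied in \cite{croke-fathi}), which holds with equality only if $g$ has constant sectional curvature. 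Multiplying the displayed inequality by $2$ and inserting the second one yields
\[
E_{\overline{\nu}}(g)\,{\rm vol}_g(M)\,h_L(g)\ \le\ 2\,E_{\overline{\nu}}(g)\,{\rm vol}_g(M)\,i(g,\overline{g})\ \le\ 4\,{\rm vol}_{\overline{g}}(M).
\]
If equality holds, then, since $0<{\rm vol}_g(M)<\infty$ and $0<h_L(g)<\infty$ for a closed negatively curved $3$-manifold, we get $0<E_{\overline{\nu}}(g)<\infty$ and hence equality in $h_L(g)\le 2\,i(g,\overline{g})$, so $g$ has constant sectional curvature.

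The crux is the first displayed inequality. Fix $\eta>0$ small and $\Pi\in\mathcal{S}_{\eta,\overline{\nu}}(M)$ with ${\rm area}_{\overline{g}}(\Pi)$ large; I compare ${\rm area}_g(\Pi)$ and ${\rm area}_{\overline{g}}(\Pi)$ by pairing the conformal current $\delta_\Pi$ with $\lambda_g$ and with $\lambda_{\overline{g}}$. Three inputs are used. \emph{(a)} $I(\delta_\Pi,\lambda_g)\le\pi\,{\rm area}_g(\Pi)$, by Theorem \ref{intersection.number.thm}(i). \emph{(b)} A surface in $\Pi\in S_\eta(M)$ is $(1+\eta)$-quasi-Fuchsian, hence almost totally geodesic in $\overline{g}$, so the defect in Theorem \ref{intersection.number.thm}(i) for the metric $\overline{g}$ is negligible in the limit: $I(\delta_\Pi,\lambda_{\overline{g}})=(1-o_\eta(1))\,\pi\,{\rm area}_{\overline{g}}(\Pi)$, uniformly over such $\Pi$. \emph{(c)} The condition $\rho(\nu_\Pi,\overline{\nu})<\eta$ forces the hyperbolic minimal surface $\Sigma(\Pi)$ to be increasingly equidistributed as $\eta\to0$, so the associated $\Gamma$-orbit of quasi-circles equidistributes towards the round circles, and therefore $\delta_\Pi$, rescaled by ${\rm area}_{\overline{g}}(\Pi)$, converges weak-$*$ to a fixed positive multiple of the round-circle current $\nu$, uniformly over such $\Pi$. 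Granting that $I(\,\cdot\,,\lambda_g)$ and $I(\,\cdot\,,\lambda_{\overline{g}})$ are continuous on the (compact) set of conformal currents in play, the common scaling cancels in the ratio and, by Theorem \ref{intersection.number.thm}(ii) applied to $g$ and to $\overline{g}$ (with $i(\overline{g},\overline{g})=1$),
\[
\frac{I(\delta_\Pi,\lambda_g)}{I(\delta_\Pi,\lambda_{\overline{g}})}\ \longrightarrow\ \frac{I(\nu,\lambda_g)}{I(\nu,\lambda_{\overline{g}})}\ =\ \frac{2\pi^2\,{\rm vol}_g(M)\,i(g,\overline{g})}{2\pi^2\,{\rm vol}_{\overline{g}}(M)}\ =:\ R.
\]
Combining \emph{(a)}, \emph{(b)}, \emph{(c)}, for every $\varepsilon>0$ there are $\eta>0$ and $L_0$ such that ${\rm area}_g(\Pi)\ge(R-\varepsilon)\,{\rm area}_{\overline{g}}(\Pi)$ for all $\Pi\in\mathcal{S}_{\eta,\overline{\nu}}(M)$ with ${\rm area}_{\overline{g}}(\Pi)\ge L_0$.

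This area comparison transfers the count defining $E_{\overline{\nu}}(g)$ to a count by $\overline{g}$-area of the same surfaces: up to finitely many (negligible) classes of small area,
\[
\#\{\Pi\in\mathcal{S}_{\eta,\overline{\nu}}(M):{\rm area}_g(\Pi)\le L\}\ \le\ \#\{\Pi\in\mathcal{S}_{\eta,\overline{\nu}}(M):{\rm area}_{\overline{g}}(\Pi)\le L/(R-\varepsilon)\}.
\]
Taking logarithms, dividing by $L\ln L$, substituting $L=(R-\varepsilon)L'$ so that $\ln\!\big((R-\varepsilon)L'\big)/\ln L'\to1$, taking $\liminf_{L\to\infty}$ and then $\eta\to0$ and $\varepsilon\to0$, and using that the resulting $\overline{g}$-count limit equals $E_{\overline{\nu}}(\overline{g})=2$ — which is part of the proof that $E(\overline{g})=2$ in \cite{calegari-marques-neves}, since the surfaces realizing that count equidistribute to the Liouville measure and hence lie in $\mathcal{S}_{\eta,\overline{\nu}}(M)$ — we obtain $E_{\overline{\nu}}(g)\le 2/R$, which is the first displayed inequality. (By Proposition \ref{alternative.definition} one may work with $\limsup$ instead of $\liminf$, which streamlines this step.)

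I expect the main obstacle to be input \emph{(c)} together with the continuity of the intersection pairing: proving that the rescaled conformal currents of an equidistributing family of almost-Fuchsian surfaces converge to a multiple of $\nu$, and that $I(\,\cdot\,,\lambda_g)$ is weak-$*$ continuous on the relevant class so that the ratio limit holds uniformly in $\Pi$ — as well as the calibration \emph{(b)}, which requires quantifying the decay of the Theorem \ref{intersection.number.thm}(i) defect for surfaces whose second fundamental form tends to $0$. The remaining bookkeeping in the counting step — discarding finitely many classes and interchanging $\liminf_L$ with the limits in $\eta$ and $\varepsilon$ — is routine.
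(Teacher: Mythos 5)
Your inequality argument is morally the paper's argument, but it introduces an avoidable and unjustified step. The paper never pairs $\delta_\Pi$ with $\lambda_{\overline g}$ at all: it uses the identity $E_{\overline\nu}(g)=2/A_{\overline\nu}(g)$ from \eqref{alternative.entropy.mu} (your ``streamlining'' remark at the end), picks a minimizing sequence $\Pi_i\in\mathcal S_{1/i}(M)$ with $\nu_{\Pi_i}\to\overline\nu$ realizing $A_{\overline\nu}(g)=\lim_i\mathrm{area}_g(\Pi_i)/\mathrm{area}_{\overline g}(\Pi_i)$, feeds Theorem \ref{intersection.number.thm}(i) directly into
$\mathrm{area}_g(\Pi_i)/\mathrm{area}_{\overline g}(\Pi_i)\ge \pi^{-1}\,I\bigl(\delta_{\Pi_i}/\mathrm{area}_{\overline g}(\Pi_i),\lambda_g\bigr)$,
and lets Propositions \ref{convergence} and \ref{continuity.form} convert the right-hand side to $(2\pi^2\mathrm{vol}_{\overline g}(M))^{-1}I(\nu,\lambda_g)$, which is $\mathrm{vol}_g(M)\,i(g,\overline g)/\mathrm{vol}_{\overline g}(M)$ by Theorem \ref{intersection.number.thm}(ii). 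Your inputs (a) and (c) are exactly these propositions; your input (b), the calibration
$I(\delta_\Pi,\lambda_{\overline g})=(1-o_\eta(1))\,\pi\,\mathrm{area}_{\overline g}(\Pi)$, is the extraneous piece. It is plausible (in fact for $\eta<\tilde\eta$ a hyperbolic geodesic meets an almost-Fuchsian minimal disk at most once, which would even give equality), but you neither prove it nor note that it can be dispensed with entirely. So the inequality half is correct in outline with an unnecessary detour.

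The equality case is a genuine gap. You reduce it to ``$h_L(g)=2\,i(g,\overline g)$ implies $g$ has constant sectional curvature'' and attribute this rigidity to the Croke--Fathi/Knieper comparison, but neither \cite{croke-fathi} nor \cite{knieper-stretch} is cited for such a statement in the paper, and the Abramov-formula mechanism behind the inequality (see Proposition \ref{stretch.entropy}: $h_L(g)=i(g,\overline g)\,h_{\mu_\Psi}(U)$ and $h_{\mu_\Psi}(U)\le h_{top}(U)=2$) only pins down that a certain reparametrization measure is the Bowen--Margulis measure. Turning that into rigidity of $g$ would require a nontrivial marked-length-spectrum or entropy-rigidity theorem which you do not supply. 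The paper proves rigidity by an entirely different route: from equality it extracts, via the proof of Theorem \ref{intersection.number.thm}(i), the vanishing
$\lim_i \mathrm{area}_g(\Pi_i)^{-1}\int_{T_i}\{\#(\gamma_g(u)\cap\Omega_i)-1\}\,d\lambda_g(u)=0$,
then uses the equidistribution of the $\Pi_i$ together with Ratner--Shah (through Corollary 6.2 of \cite{calegari-marques-neves}) and stable minimal surface estimates to produce, for a dense set of round circles $\sigma\in QC_0$, a totally geodesic disk in $(\H^3,g)$ with boundary $\sigma$; by continuity and invariance of domain $Gr_2(M)$ is foliated by $g$-totally geodesic planes, and the Codazzi equations force constant curvature. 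That minimal-surface analysis is the substantive content of the equality case and is absent from your proposal.
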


We briefly sketch the proof of the inequality in Theorem \ref{product.entropy.thm}. We first show the existence  of $\Pi_i\in \mathcal S_{1/i}(M)$ so that
$$E_{\overline{\nu}}(g)=2\lim_{i\to\infty}\frac{{\rm area}_{\overline{g}}(\Pi_i)}{{\rm area}_{g}(\Pi_i)}\quad\mbox{and}\quad\lim_{i\to\infty}\frac{1}{{\rm area}_{\overline{g}}(\Pi_i)}\delta_{\Pi_i}=\frac{1}{2\pi{\rm vol}_{\bar g}(M)}\,{\nu}.$$
Knieper \cite[Theorem 1.1]{knieper-stretch} showed that $i(g,\overline{g})\geq \frac{h_L(g)}{2}$. The form $I$ is continuous  at $(\nu,\lambda)$ for any $\lambda\in  \mathcal C(\Gamma)$  by Proposition \ref{continuity.form} (also \cite{brody-reyes}). Hence, by Theorem \ref{intersection.number.thm}, 
\begin{multline}\label{sktech.proof}
\text{\rm vol}_g(M)h_L(g)\leq 2 \text{\rm vol}_g(M)i(g,\overline{g})=\frac{ I(\nu,\lambda_g)}{\pi^2}
\\=2\frac{{\rm vol}_{\bar g}(M)}{\pi}\lim_{i\to\infty}I(\frac{\delta_{\Pi_i}}{{\rm area}_{\overline{g}}(\Pi_i)},\lambda_g)
\leq 2{\rm vol}_{\bar g}(M) \lim_{i\to\infty}\frac{{\rm area}_{{g}}(\Pi_i)}{{\rm area}_{\overline{g}}(\Pi_i)}\\=4\frac{{\rm vol}_{\bar g}(M)}{E_{\overline{\nu}}(g)}.
\end{multline}

The   entropy $E_{\overline{\nu}}(g)$  counts only the homotopy classes for which the area-minimizing  surfaces in the hyperbolic metric equidistribute in the Grassmannian $Gr_2(M)$ of $M$.   Such sequences of homotopy classes are constructed in   \cite[Proposition 6.1]{lowe-neves} using the work of Kahn-Markovic \cite{kahn-markovic2}  (see also \cite[Theorem 5.7]{labourie}).  In \cite{kahn-markovic-smilga} there is a description of general limiting measures.

If the sectional curvatures of $g$ are less than or equal to $-1$, the same proof as in \cite{calegari-marques-neves} implies that $E_{\overline{\nu}}(g)\geq 2$
and equality holds if and only if $g$ is hyperbolic. 
Since in this case, by Pesin's formula (\cite{pesin}),  $h_L(g)\geq 2$ it follows from Theorem \ref{product.entropy.thm} that 
${\rm sec}_g\leq -1$ implies ${\rm vol}_g(M)\leq {\rm vol}_{\overline{g}}(M)$. This is known and was proven in the general $n$-dimensional case in \cite{BCG-applications}.


The previous theorem implies the following corollary for the minimal surface entropy.

\begin{cor}\label{totally.geodesic.entropy} We have
 $$E(g)h_L(g)\text{\rm vol}_g(M)\leq 4\text{\rm vol}_{\overline{g}}(M)$$
 for any metric of negative sectional curvature $g$  on $M$
if and only if   $(M,\overline{g})$ has no closed totally geodesic surfaces.  In this case, if equality holds then the metric $g$ has constant sectional curvature.
\end{cor}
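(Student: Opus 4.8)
\emph{Proof proposal.} I would prove the two implications separately, deducing the equality statement from the rigidity in Theorem \ref{product.entropy.thm}.

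\emph{First implication: if $(M,\overline{g})$ has no closed totally geodesic surface, the inequality holds for all negatively curved $g$.} The plan is to show $E(g)=E_{\overline{\nu}}(g)$, which together with the bound $E_{\overline{\nu}}(g)\le E(g)$ noted after \eqref{entropy.measure} reduces the inequality \emph{and} its equality case to Theorem \ref{product.entropy.thm}. Only $E(g)\le E_{\overline{\nu}}(g)$ needs proof, and for this it suffices to show $\nu_\Pi\to\overline{\nu}$ uniformly over $\Pi\in S_\eta(M)$ as $\eta\to 0$; then $S_\eta(M)\subseteq\mathcal{S}_{\eta',\overline{\nu}}(M)$ once $\eta$ is small depending on $\eta'$, and the counting definitions \eqref{asymp.area}, \eqref{entropy.measure} give $E(g)\le E_{\overline{\nu}}(g)$. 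If uniform convergence failed there would be $\Pi_i\in S_{\eta_i}(M)$ with $\eta_i\to 0$ and $\rho(\nu_{\Pi_i},\overline{\nu})\ge\delta>0$. By the estimates of Seppi and Uhlenbeck quoted before \eqref{entropy.measure}, for $\eta$ small the surface $\Sigma(\Pi)$ is the unique $\overline{g}$-minimal surface in $\Pi$ with asymptotic boundary a $(1+\eta)$-quasicircle, and its principal curvatures are bounded by a quantity $\kappa(\eta)\to 0$. Hence the geodesics of $\Sigma(\Pi_i)$ are $O(\kappa(\eta_i))$-close to geodesics of $M$, so the Grassmann lift of $\Sigma(\Pi_i)$ in $Gr_2(M)$ is, on every compact set of the group, $O(\kappa(\eta_i))$-close to being invariant under the homogeneous ${\rm PSL}(2,\mathbb{R})$-action; therefore any weak-$*$ subsequential limit $\mu$ of the $\nu_{\Pi_i}$ is a ${\rm PSL}(2,\mathbb{R})$-invariant unit Radon measure on $Gr_2(M)$. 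By Ratner--Shah $\mu$ is a convex combination of $\overline{\nu}$ and measures supported on closed totally geodesic surfaces; since there are none, $\mu=\overline{\nu}$, contradicting $\rho(\mu,\overline{\nu})\ge\delta$.

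\emph{Second implication: if $(M,\overline{g})$ contains a closed totally geodesic surface $S$, the inequality fails for some negatively curved $g$.} Let $\overline{g}|_S$ be the induced hyperbolic metric, so $-\chi(S)={\rm area}_{\overline{g}}(S)/2\pi$. A degree $k$ cover of $S$ produces a homotopy class $\Pi$ in $M$ which is Fuchsian (hence in $S_\eta(M)$ for all $\eta>0$) with ${\rm area}_g(\Pi)\le k\,{\rm area}_g(S)$ for every $g$; using the asymptotics $s_k(\pi_1 S)\sim C\,k\,(k!)^{-\chi(S)}$ for the number of index $k$ subgroups of a surface group, the counting argument of \cite{calegari-marques-neves} gives, for every negatively curved $g$,
\[
E(g)\ \ge\ 4\pi\,\frac{-\chi(S)}{{\rm area}_g(S)}\ =\ \frac{2\,{\rm area}_{\overline{g}}(S)}{{\rm area}_g(S)}.
\]
Thus it is enough to exhibit a negatively curved $g$ for which the \emph{explicit} functional $F(g):={\rm area}_{\overline{g}}(S)\,h_L(g)\,{\rm vol}_g(M)/{\rm area}_g(S)$ exceeds its value $F(\overline{g})=2\,{\rm vol}_{\overline{g}}(M)$ (recall $h_L(\overline{g})=2$), for then $E(g)h_L(g){\rm vol}_g(M)\ge 2F(g)>4\,{\rm vol}_{\overline{g}}(M)$; equivalently, it suffices that $\overline{g}$ not be a critical point of $F$ among negatively curved metrics. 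Take $g_\tau=(1-\tau\phi)\,\overline{g}$, where $\phi$ is a fixed nonnegative smooth function equal to $1$ on the $(\varepsilon/2)$-neighborhood of $S$ and supported in its $\varepsilon$-neighborhood (no embeddedness of $S$ is needed). The map $\tau\mapsto\log F(g_\tau)$ is differentiable at $0$, and its derivative is $\tfrac12\tfrac{d}{d\tau}\log h_L(g_\tau){\rm vol}_{g_\tau}(M)|_0-\tfrac{d}{d\tau}\log{\rm area}_{g_\tau}(S)|_0$. Here ${\rm area}_{g_\tau}(S)=(1-\tau){\rm area}_{\overline{g}}(S)$, so $\tfrac{d}{d\tau}\log{\rm area}_{g_\tau}(S)|_0=-1$, while $\tfrac{d}{d\tau}{\rm vol}_{g_\tau}(M)|_0=-\tfrac32\int_M\phi\,d\mathcal{H}^3=O(\varepsilon)$ and, by Pesin's formula $h_L=\int\chi^+$ and the first variation of the Liouville entropy (which at the hyperbolic metric is given by a bounded density depending linearly on the variation, since there the weak-(un)stable foliations are smooth), $\tfrac{d}{d\tau}h_L(g_\tau)|_0=O(\int_M\phi\,d\mathcal{H}^3)=O(\varepsilon)$ as well. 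Hence $\tfrac{d}{d\tau}\log F(g_\tau)|_0=1+O(\varepsilon)>0$ for $\varepsilon$ small; fixing such an $\varepsilon$, the metrics $g_\tau$ have negative sectional curvature for $\tau>0$ small and $F(g_\tau)>F(\overline{g})$, so the corollary's inequality fails for $g_\tau$. (In the equality statement "in this case", $E(g)=E_{\overline{\nu}}(g)$ by the first implication, so equality is equality in Theorem \ref{product.entropy.thm} and forces constant curvature; note that for $g=\overline{g}$ one has equality, $E(\overline{g})h_L(\overline{g}){\rm vol}_{\overline{g}}(M)=4\,{\rm vol}_{\overline{g}}(M)$, consistently.)

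\emph{Main obstacles.} In the first implication the technical core is the uniform decay of the second fundamental form of $\Sigma(\Pi)$ as $\eta\to 0$ together with the resulting approximate ${\rm PSL}(2,\mathbb{R})$-invariance of $\nu_\Pi$, which is exactly what lets Ratner--Shah be applied in the limit; one also needs the verification (as in \cite{calegari-marques-neves}) that distinct finite covers of $S$ contribute a definite proportion of distinct homotopy classes to the count. In the second implication the key input I expect to require care is the first variation of $h_L$ with respect to the metric at $\overline{g}$: one must know that it is given by a bounded $\dot g$-linear density on $M$ (so that, after the integration by parts absorbing the curvature terms, a metric variation supported in an $\varepsilon$-neighborhood of $S$ contributes only $O(\varepsilon)$, hence is dominated by the term from $\tfrac{d}{d\tau}{\rm area}_{g_\tau}(S)$); this is where the homogeneity and the regularity of the (un)stable distributions of the hyperbolic geodesic flow are used.
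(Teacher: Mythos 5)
Your proposal is essentially correct, and both implications succeed; let me note where you track the paper and where you diverge.

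For the first implication your argument is the same in substance as the paper's Proposition \ref{no.totally.geodesic}: use the curvature estimates of Seppi and Uhlenbeck to get approximate $\mathrm{PSL}(2,\mathbb{R})$-invariance of the Grassmann lifts, pass to a weak-$*$ limit, and invoke Ratner--Shah together with the absence of closed totally geodesic surfaces to force the limit to be $\overline{\nu}$. The paper applies this only to a single $A(g)$-minimizing sequence (so that $A_{\overline{\nu}}(g)\le A(g)$, hence $E_{\overline{\nu}}(g)\ge E(g)$ via Proposition \ref{alternative.definition} and \eqref{alternative.entropy.mu}); you run the same argument uniformly over $S_\eta(M)$ to get the inclusion $S_\eta(M)\subseteq\mathcal{S}_{\eta',\overline{\nu}}(M)$. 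Both are valid and essentially equivalent.

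For the second implication you take a genuinely different route. The paper's Proposition \ref{not.true} is a proof by contradiction: assuming the inequality held for all metrics near $\overline{g}$, it conformally \emph{expands} the metric on an open set \emph{disjoint} from the totally geodesic surface $\Sigma$ (so that $\mathrm{area}_{g(t)}(\Pi)$ is exactly preserved because $g(t)\ge\overline{g}$, while $\mathrm{vol}_{g(t)}(M)$ strictly increases), then invokes the Katok--Knieper--Weiss criticality of $h_L$ at $\overline{g}$ among fixed-volume metrics to derive a contradiction. You instead conformally \emph{contract} the metric on a thin $\varepsilon$-neighborhood of $S$, so that $\mathrm{area}_{g_\tau}(S)$ drops by a fixed fraction while $\mathrm{vol}_{g_\tau}(M)$ and $h_L(g_\tau)$ change only by $O(\varepsilon)$, and you compute the derivative of your functional $F$ directly. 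The mechanism is different (shrinking the area of $S$ versus enlarging the ambient volume), and your argument is a direct construction rather than a contradiction, but the analytic input is the same: you still need differentiability of $h_L$ at $\overline{g}$ with the derivative given by a bounded density, which is precisely what one extracts from \cite{katok-knieper-weiss} (criticality on fixed-volume metrics together with the scaling $h_L(c^2g)=c^{-1}h_L(g)$ forces $dh_L|_{\overline{g}}$ to be a constant multiple of $d\,\mathrm{vol}|_{\overline{g}}$, hence a bounded density). I would cite that theorem explicitly rather than invoking regularity of the weak-(un)stable foliations, both because it is what the paper relies on and because it immediately gives the quantitative bound you need. Two small points: the intermediate formula for $\frac{d}{d\tau}\log F(g_\tau)|_0$ has a stray factor of $\tfrac12$ (the correct expansion is the sum of the three logarithmic derivatives, giving $1-\frac{\int_M\phi\,dV}{\mathrm{vol}_{\overline{g}}(M)}+O(\varepsilon)$), but your conclusion $1+O(\varepsilon)>0$ is correct; and the order of limits you state at the end (fix $\varepsilon$ small, then take $\tau$ small depending on $\varepsilon$ to preserve negative curvature) is exactly right, since $\|\nabla^2\phi\|\sim\varepsilon^{-2}$.
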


Indeed, if $(M,\overline{g})$ does not have compact totally geodesic surfaces, then $E_{\overline{\nu}}(g)=E(g)$ (Proposition \ref{no.totally.geodesic}) and so Theorem \ref{product.entropy.thm} applies. If $(M,\overline{g})$ admits a closed totally geodesic surface we prove in Proposition \ref{not.true} that the inequality in Corollary \ref{totally.geodesic.entropy} fails for some perturbation of $\bar g$.

There are closed hyperbolic three-manifolds which do not have compact totally geodesic surfaces (\cite{MacR}). \medskip

\noindent{\bf Area ratio.}  
In \cite[Section 2.4.C]{gromov2},  Gromov defined 
$$A(g/\bar g)=\frac{1}{2\pi\, {\rm vol}_{\overline{g}}(M)}\int_{Gr_2(M)} \frac{{\rm area}_g}{{\rm area}_{\overline{g}}}(x,P)dV_{\overline{g}},$$
where for $(x,P)\in Gr_2(M)$, 
$\frac{{\rm area}_g}{{\rm area}_{\overline{g}}}(x,P)=\lim_{\delta\to 0}\frac{{\rm area}_g(P_{\delta})}{\delta},$
with $P_{\delta}\subset M$  a surface tangent to $P$ at $x\in M$ and  $\bar g$-area $\delta$, $dV_{\overline{g}}$ the Liouville measure.

For surfaces the analogue quantity to the area ratio was introduced by Katok \cite{katok}. In \cite[Corollary 1.3]{lowe-neves} it is shown that for any metric $g$ with scalar curvature greater than or equal to $-6$ we have ${\rm Area}(g/\overline g)\geq 1$ and equality occurs if and only if $g=\bar g$.

In Theorem 1.2 of \cite{lowe-neves} it was shown $A(g/\bar g)E(g)\geq 2$ for every metric $g$ and that equality implies that $g$ is a multiple of $\bar g$.  In Section \ref{proof.second.thm} we show that $A(g/\bar g)E_{\overline{\nu}}(g)\geq 2$ for any metric $g$. Combining with Theorem \ref{product.entropy.thm} we deduce:

\begin{cor}\label{area.distortion} Let $g$ be a metric of negative sectional curvature on $M$. Then
$$
 h_L(g) {\rm vol}_g(M)\leq 2 A(g/\bar g)\text{\rm vol}_{\overline{g}}(M),
$$
and if equality holds the metric $g$ is a constant multiple of $\bar g$.
\end{cor}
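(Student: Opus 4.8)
The plan is to read off Corollary \ref{area.distortion} from Theorem \ref{product.entropy.thm} once the auxiliary bound $A(g/\bar g)\,E_{\overline{\nu}}(g)\geq 2$ is available; the latter is established in Section \ref{proof.second.thm} (an $E_{\overline{\nu}}$-analogue of \cite[Theorem 1.2]{lowe-neves}). So for the purposes of this proof I treat both of those statements as known.

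First the inequality. The number $A(g/\bar g)$ is a Liouville average over $Gr_2(M)$ of the strictly positive densities $\frac{{\rm area}_g}{{\rm area}_{\overline{g}}}(x,P)$, so $A(g/\bar g)>0$, and the auxiliary bound rearranges to $E_{\overline{\nu}}(g)\geq 2/A(g/\bar g)$. Substituting into Theorem \ref{product.entropy.thm} gives the chain
$$
h_L(g)\,{\rm vol}_g(M)\ \leq\ \frac{4\,{\rm vol}_{\overline{g}}(M)}{E_{\overline{\nu}}(g)}\ \leq\ 2\,A(g/\bar g)\,{\rm vol}_{\overline{g}}(M),
$$
which is the assertion. (The bound is sharp: for $g=c\,\overline{g}$ one has $A(g/\bar g)=c$, $h_L(g)=2c^{-1/2}$ and ${\rm vol}_g(M)=c^{3/2}{\rm vol}_{\overline{g}}(M)$, so both sides equal $2c\,{\rm vol}_{\overline{g}}(M)$.)

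It remains to treat the equality case. Suppose equality holds in the corollary; then both inequalities in the displayed chain are equalities. Equality in the left inequality is equality in Theorem \ref{product.entropy.thm}, so $g$ has constant sectional curvature, say ${\rm sec}_g\equiv -a^2$. By the Mostow Rigidity Theorem in dimension three --- for which this paper gives an independent proof, so the reasoning is not circular --- the hyperbolic metric $a^2 g$ is isometric to $\overline{g}$ through a diffeomorphism $\phi$ homotopic to the identity, whence $g=a^{-2}\phi^{*}\overline{g}$. The product $h_L(g)\,{\rm vol}_g(M)$ is unchanged under diffeomorphisms and is multiplied by $c$ when $g$ is replaced by $c\,g$, so the left-hand side of the corollary equals $a^{-2}h_L(\overline{g})\,{\rm vol}_{\overline{g}}(M)=2a^{-2}{\rm vol}_{\overline{g}}(M)$; meanwhile ${\rm area}_g/{\rm area}_{\overline{g}}=a^{-2}\,{\rm area}_{\phi^{*}\overline{g}}/{\rm area}_{\overline{g}}$ pointwise on $Gr_2(M)$, hence $A(g/\bar g)=a^{-2}A(\phi^{*}\overline{g}/\overline{g})$. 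Equality in the corollary therefore forces $A(\phi^{*}\overline{g}/\overline{g})=1$. Since $E$ is a diffeomorphism invariant, $E(\phi^{*}\overline{g})=E(\overline{g})=2$, so the equality case of \cite[Theorem 1.2]{lowe-neves} applied to $\phi^{*}\overline{g}$ shows that $\phi^{*}\overline{g}$ is a constant multiple of $\overline{g}$; comparing sectional curvatures that constant is $1$, so $\phi$ is an isometry and $g=a^{-2}\overline{g}$ is a constant multiple of $\overline{g}$, as claimed.

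I expect the main obstacle to lie entirely in this last passage: the inequality is a two-line consequence of results proved elsewhere in the paper, but the rigidity conclusion requires upgrading ``$g$ has constant sectional curvature'' to ``$g$ is a genuine constant multiple of $\overline{g}$'', and this seems to need both three-dimensional rigidity and the sharp case of the area-ratio bound. If Section \ref{proof.second.thm} instead pins down the equality case of $A(g/\bar g)E_{\overline{\nu}}(g)\geq 2$ directly, with conclusion ``$g$ proportional to $\overline{g}$'', then the equality analysis above collapses to a single line.
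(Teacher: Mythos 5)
Your proof is correct and follows essentially the same route as the paper: the inequality comes by chaining Theorem \ref{product.entropy.thm} with the bound $A(g/\bar g)E_{\overline{\nu}}(g)\geq 2$ from Section \ref{proof.second.thm}, and the equality case goes via the rigidity in Theorem \ref{product.entropy.thm} (constant curvature), Mostow, and the equality case of \cite[Theorem 1.2]{lowe-neves}. The only cosmetic difference is that you make the Mostow step explicit and run the scaling bookkeeping on $\phi^*\overline{g}$ to get $A(\phi^*\overline{g}/\overline{g})=1$, whereas the paper compresses this to the assertion $E_{\overline{\nu}}(g)=E(g)$ (also relying implicitly on Mostow and the hyperbolic-metric independence discussed at the end of Section \ref{mostow.rigidity}) before invoking \cite[Theorem 1.2]{lowe-neves}.
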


\noindent{\bf Mostow Rigidity Theorem.} The Mostow Rigidity Theorem states that  closed hyperbolic manifolds of dimension greater than or equal to three that are homotopically equivalent are isometric. There are different proofs for this theorem (e.g. \cite{BCG}, \cite{gromov-mostow},\cite{mostow}). In Section  \ref{mostow.rigidity} we give a new proof for the three-dimensional case.

 Our proof combines some generalizations of Theorem \ref{intersection.number.thm} for maps and minimal surface theory. We sketch the argument in the following particular case. Suppose that  $N=\H^3/\Gamma_N$ is diffeomorphic to $M=\H^3/\Gamma_M$ by a map  $E:M\rightarrow N$.  We want to show that $E$ is homotopic to an isometry. 
 
 The map $E$ induces a  homeomorphism on the boundary at infinity $\tilde{E}:S^2\rightarrow S^2$. The strategy consists to show that $\tilde E$ maps round circles to round circles. If that is the case, a  theorem of Carath\'eodory  \cite{caratheodory} implies the map $\tilde{E}$ is a M\"obius transformation and so there is a unique $I\in \text{Isom}(\H^3)$ so that $I_{|S^2}=\tilde{E}$. Then standard arguments  show that $I$ induces an isometry between $M$ and $N$.
 
Set $g=E^*(\bar g_N)$  and without loss of generality assume that $\text{\rm vol}_{\overline{g}}(M)\leq \text{\rm vol}_g(M)$. Consider the sequence $\Pi_i\in S_{1/i}(M)$ used in deducing  \eqref{sktech.proof} and  let $S_i\in \Pi_i$ denote the minimal surface with respect to $g$ for which ${\rm area}_{{g}}(\Pi_i)={\rm area}_{{g}}(S_i)$.   From the volume inequality  and  $h_L(g)=2$, we see from \eqref{sktech.proof} that $1\leq \lim_{i\to\infty}\frac{{\rm area}_{{g}}(\Pi_i)}{{\rm area}_{\overline{g}}(\Pi_i)}$. We then use the Gauss-Bonnet Theorem to show that $\frac{1}{\text{area}_{g}(S_i)}\int_{S_i}{|A_{S_i}|^2}dA_{g}\to 0$, where $A_{S_i}$ denotes the second fundamental form of $S_i$ with respect to $g$.

In summary, we know that the area minimizer in $\Pi_i$ for the metric $\bar g$ is becoming totally geodesic as $i\to\infty$ (from \cite{seppi,uhlenbeck}) and we have argued that the  area minimizer in $\Pi_i$ for the metric $E^*(\bar g_N)$ is also becoming totally geodesic as $i\to\infty$. In Section \ref{mostow.rigidity} we show that suffices to see that $\tilde E$ maps round circles to round circles.

 \section{Preliminaries}\label{definition}

   Let $\H^3$ be the  three-dimensional hyperbolic space. We use the  Poincar\'e unit ball model  $B^3\subset \R^3$.  The  compactification $\overline \H^3$ of $\H^3$  becomes identified with  $\overline B^3$, and the boundary at infinity  $\partial_{\infty}\H^3$  with $\partial \overline B^3=S^2$. If $X\subset \H^3$, we denote by  $\overline X\subset \overline \H^3$ its closure in the cone topology (the Euclidean topology on $\overline{B}^3$) and  set 
   $\partial_{\infty}X=\overline X\cap \partial_{\infty}\H^3=\overline X\cap S^2.$ The group $\text{Isom}_+(\H^3)$ of    orientation-preserving isometries  of the three-dimensional hyperbolic space can be identified by restriction to the boundary with the group of orientation-preserving conformal automorphisms of $S^2$, denoted by $\text{M\"{o}b}(S^2)$.

Let  $ \mathcal{G}$  be  $(S^2\times S^2-\{(x,x):x\in S^2\})/\sim$, where $(x,y)\sim(y,x)$,  with the quotient topology. The group $\text{M\"{o}b}(S^2)$  acts diagonally on $\mathcal G$.  We denote by $G(\H^3)$ the space of unoriented complete geodesics in $\H^3$  with the compact-open topology. If  $\gamma\in G(\H^3)$, then $\partial_{\infty}\gamma$ consists of two  points of $S^2$.  
 The  map $\partial_{\infty}:G(\H^3)\rightarrow \mathcal G$  is a homeomorphism. 
 
Consider  $\Gamma\subset  \text{Isom}_+(\H^3)$  a discrete group of isometries   such that $M=\H^3/\Gamma$ is a closed  manifold. Let $\pi:\H^3 \rightarrow \H^3/\Gamma$ be the canonical projection. 
 A  Riemannian metric $g$ on $M$ can be pulled back to  a metric on $\H^3$ denoted also  by $g$. We will  assume that $g$ has negative sectional curvature and we denote by $\overline{g}$ the hyperbolic metric.
 
 A geodesic $\gamma$ in  the metric $g$ in $\H^3$ defines  a unique hyperbolic geodesic  $\tilde{\gamma}$ such that the Hausdorff distance between  $\gamma$ and $\tilde{\gamma}$ is bounded. The geodesic $\gamma$ is determined by $\partial_{\infty}\gamma$. Hence the boundary at infinity of $(\H^3,g)$ is canonically identified with $\partial_\infty{\H^3}=S^2$. With $G(\H^3,g)$ denoting the set of all unoriented complete geodesics in $(\H^3,g)$ with the compact-open topology, the map $\partial_{\infty}: G(\H^3,g)\rightarrow \mathcal G$ is a  homeomorphism and we denote the inverse map  by $u\in\mathcal G\mapsto \gamma_g(u)\in G(\H^3,g)$.

The conjugacy class of $\alpha\in \Gamma$ and the free homotopy class of of a curve $\sigma$ are  denoted by $[\alpha]$ and $[\sigma]$ respectively. These classes are in one-to-one correspondence with each other. Every  homotopy class $[\sigma]$ contains a unique closed geodesic $\gamma$ with respect to $g$ which is also the curve of least length in $[\sigma]$. We use $l_g([\gamma])$ or $l_g(\gamma)$ denote the length of $\gamma$ with respect to $g$.

Let  $T^1_g\H^3$ and   $T^1_gM$ denote the unit tangent bundles of $(\H^3,g)$ and $(M,g)$. The  Liouville measure on either $T^1_g\H^3$ or   $T^1_gM$ is denoted by $dV_g$ and $dV_g(T^1_gM)=4\pi\text{\rm vol}_g(M)$.  If we  do not specify the Riemannian metric,  it is because we are referring  to the hyperbolic metric $\overline{g}$ on either $M$ or $\H^3$.

\subsection{Quasi-circles}\label{quasicircles}

A simple closed curve $\sigma\subset S^2$ is called a {\em $(1+\eta)$-quasi-circle} if there is a $(1+\eta)$-quasi-conformal homeomorphism $f:S^2\rightarrow S^2$ 
such that  $f(\{z\in \mathbb{C}:|z|=1\})=\sigma$.  Let   ${QC}_\eta$ be the space of  $(1+\eta)$-quasi-circles of $S^2$  with the topology induced by the Hausdorff distance $d_{\mathcal{H},S^2}$. Hence ${QC}_0$ is the space of geodesic circles in $S^2$, which is  homeomorphic to a punctured $\RP^3$.
 The group  $\text{M\"{o}b}(S^2)$ acts continuously on ${QC}_\eta$ by taking the image $\sigma \mapsto \phi(\sigma)$, $\phi \in \text{M\"{o}b}(S^2)$.
 
If a sequence of $(1+\eta)$-quasi-circles $\{\sigma_n\}$ is such that for different points $x,y,z\in S^2$, there exists $\{x_n,y_n,z_n\}\subset \sigma_n$  with $x_n\rightarrow x$, $y_n\rightarrow y$, $z_n\rightarrow z$, then there is a subsequence $\{\sigma_j\}\subset \{\sigma_n\}$ that converges to a $(1+\eta)$-quasi-circle $\sigma$ containing $x,y,z$.  Hence, for any $r>0$  the set $K_r\subset{QC}_\eta$ of quasi-circles  which are not contained in some geodesic ball $B_r(x)\subset S^2$ is compact. The space $QC_\eta$ is then locally compact and Hausdorff. If $\eta<\eta'$, then $QC_\eta$ is a closed subset of $QC_{\eta'}$. Also $QC_\eta$ is a $\sigma$-compact topological space, since $QC_\eta = \cup_{n \in \mathbb{N}} K_{1/n}$.

\subsection{Convex hull}\label{convex.hull}

A set $C\subset \overline \H^3$ is convex if the geodesic connecting any pair of points in $C$ is contained in $C$. For  a closed set $X\subset \partial_{\infty}\H^3$, its {\em convex hull} $\text{conv}(X)\subset \overline \H^3$   is the intersection of all  convex  subsets of   $\overline \H^3$ which contain $X$.  If $\sigma\in QC_0$, then $\text{conv}(\sigma)$ is a totally geodesic disk $D(\sigma)$ with $\partial_{\infty}D(\sigma)=\sigma$. 

 The convex hull  satisfies some continuity properties. Consider a  sequence $(\sigma_i,\partial_{\infty}\gamma_i)$ in $QC_\eta\times \mathcal G$ converging to $(\sigma,\partial_{\infty}\gamma)$, where $\gamma_i, \gamma\in G(\H^3)$, $i\in \N$.  Then \cite[Theorem 1.4]{bowditch} implies that $\text{conv}(\sigma_i)$ and $\bar \gamma_i$  converge, respectively,  in the Hausdorff distance of  the Euclidean metric on $\overline B^3$, to $\text{conv}(\sigma)$ and $\bar \gamma$.

\subsection{Integral geometry}\label{integral.geometry} 

The Grassmannian of unoriented $2$-planes in $\H^3$ is denoted by $Gr_2(\H^3)$. Given $(x,P)\in Gr_2(\H^3)$, there is  a unique totally geodesic disk $D(x,P)\subset \H^3$ with $x\in D(x,P)$ and $T_xD(x,P)=P$. Thus we get a continuous map $\theta:Gr_2(\H^3)\rightarrow QC_0$ defined by
\begin{equation}\label{theta}
 \theta(x,P)=\partial_{\infty}D(x,P).
\end{equation}

Let $dV$ be the natural volume measure on $Gr_2(\H^3)$. Santal\'o's formula (see Chapter 14 of \cite{santalo}) gives a Radon measure  $\nu$ on $QC_0$ that is invariant  under $\text{M\"{o}b}(S^2)$, and such that for every region $\Omega\subset \H^3$, every surface $S\subset \H^3$, and every curve $c\subset \H^3$ we have, respectively,
\begin{equation}\label{santalo1}
\int_{QC_0} \text{area}(D(\sigma)\cap \Omega)\,d\nu(\sigma)=2\pi\, \text{\rm vol}(\Omega),
\end{equation}
\begin{equation}
\int_{QC_0} \text{length}(D(\sigma)\cap S)\,d\nu(\sigma)=\frac{\pi^2}{2} \, \text{area}(S),
\end{equation}
and 
 \begin{equation}\label{santalo2}
\int_{QC_0} \#(D(\sigma)\cap c)\, d\nu(\sigma)=\pi\, \text{length}(c),
\end{equation}
 If   $\psi \in C_c(Gr_2(\H^3))$, then 
\begin{equation}\label{coarea}
\int_{Gr_2(\H^3)}\psi\, dV =\int_{QC_0}\int_{D(\sigma)}\psi(x,T_xD(\sigma)) \, dA(x) \,d\nu(\sigma).
\end{equation}
The  measure $dV$  also induces a volume measure $dV$ on $Gr_2(M)$, the Grassmanian of unoriented $2$-planes of $M$, whose total volume is $2\pi\text{\rm vol}(M)$.

For a negatively curved metric $g$ on $M$, we  consider the maps
\begin{equation}\label{theta1}
p_1:T^1_g(\H^3)\rightarrow \H^3, \quad p_1(x,v)=x
\end{equation}
and
\begin{equation}\label{theta1}
p_2:T^1_g(\H^3)\rightarrow \mathcal G,\quad p_2(x,v)=\partial_{\infty}\gamma_g(x,v),
\end{equation}
where $\gamma_g(x,v)$ is the unique geodesic in $(\H^3,g)$ determined by $(x,v)\in T^1_g(\H^3)$. For each $x\in\H^3$,  the metric $g$ induces a natural area measure $\omega_x$ on the fiber $p_1^{-1}(x)$ with area $4\pi$.

 In \cite[IV.19.4]{santalo},  a measure $\lambda_g$ on $\mathcal G$ (called the {\em Liouville measure on $\mathcal G$}) is defined so that for every surface $\Sigma\subset \H^3$ and every nonnegative measurable function $F$ in $\mathcal G$ we have 
\begin{eqnarray}\label{santalo.formula0}
&&\nonumber \int_{\mathcal G}\#(\gamma_g(u)\cap \Sigma) F(u)d\lambda_g(u)\\
&&\hspace{2cm} =\frac{1}{2}\int_\Sigma \int_{p_1^{-1}(x)}F\circ p_2(x,v) | \langle v, N_\Sigma(x)\rangle |d\omega_x(v) dA_g(x),
\end{eqnarray}
where $N_\Sigma$ is a unit normal to $\Sigma$ for the metric $g$ and $\gamma_g(u)\in G(\H^3,g)$ is the unique geodesic such that $\partial_{\infty}\gamma_g(u)=u$. In particular,
\begin{equation}\label{santalo.formula}
\int_{\mathcal G}\#(\gamma_g(u)\cap \Sigma)d\lambda_g(u)=\pi\text{area}_g(\Sigma).
\end{equation}
Also, if  $\psi \in C_c(Gr_1(\H^3))$ then 
\begin{equation}\label{measure.on.geodesics}
\int_{Gr_1(\H^3)}\psi\, dV_g =\int_{\mathcal{G}}\int_{\gamma_g(\sigma)}\psi(x,T_x\gamma_g(\sigma)) \, ds_g(x) \,d\lambda_g(\sigma),
\end{equation}
where $ds_g$ is arc length in the metric $g$ and  $dV_g$ is the natural volume measure on $Gr_1(\mathbb{H}^3)$ induced by $g$.

\subsection{Geodesic stretch} Let $(N,g_1)$, $(M,g_2)$ be compact manifolds with   negative sectional curvature and  $F:N\rightarrow M$ be a continuous  map.  We denote by $\pi(v)$ the basepoint of a vector $v$.   The geodesic flow with respect to the metric $g_1$ is denoted by 
$g_1^t:T^1_{g_1}(N)\rightarrow T^1_{g_1}(N)$,  or $g_1^t:T^1_{g_1}(\H^3)\rightarrow T^1_{g_1}(\H^3)$, $t\in\R$. 
For  $v\in T^1_{g_1}(N)$, denote by $a(t,v)$ the minimum length in  the $g_2$-metric of paths in $M$ homotopic with fixed endpoints to $$s\mapsto F(\pi(g_1^s(v))), \quad 0\leq s\leq t.$$ If $\gamma$ is a unit speed geodesic in $(\H^3,g_1)$ so that $(\gamma(0),\gamma'(0))$ is mapped to $v\in T^1_{g_1}(N)$, and if $F:\H^3\rightarrow \H^3$ denotes a lift of $F:N\rightarrow M$, 
then $a(v,t)=\text{d}_{g_2}(F(\gamma(0)),F(\gamma(t)))$.

  In \cite{croke-fathi}, Croke and Fathi defined the {\em geodesic stretch}  of $g_2$ with respect to $g_1$ under the map $F$ as
\begin{equation}
i_F(g_1,g_2)=\frac{1}{ \text{\rm vol}_{g_1}(T^1_{g_1}N)}\lim_{t\to\infty}\int_{T^1_{g_1}(N)}\frac{a(t,v)}{t}dV_{g_1}(v).
\end{equation}
If $F$ is the identity map, we denote $i_{id}(g_1,g_2)$ by  $i(g_1,g_2)$. The geodesic stretch is referred to as the intersection of $g_1,g_2$ in \cite{croke-fathi}.

\subsection{Essential surfaces}\label{essential.section}

A {\em surface group} is a subgroup $G<\Gamma$  such that  there is a closed surface $\Sigma$ and an immersion $i:\Sigma\rightarrow M$ that is $\pi_1$-injective with $i_*(\pi_1(\Sigma))=G$. Such immersions are called {\it essential surfaces}. We denote by $\Pi$ the conjugacy class of $G$ in $\Gamma$. The set of conjugacy classes of surface groups is in one-to-one correspondence with the set of free homotopy classes of essential surfaces. 
We denote $\Sigma \in \Pi$ if the associated immersion $i:\Sigma \rightarrow M$ satisfies  $i_*(\pi_1(\Sigma))\in \Pi$. Hence we refer to  $\Pi$ as a  {\em homotopy class of essential surfaces.}

 The immersion $i:\Sigma\rightarrow M$ lifts to a $G$-invariant immersion $\tilde i:D\rightarrow \H^3$, where $D$ is the universal cover of $\Sigma$.  Hence 
 $\tilde{i}(\phi(x))=i_*(\phi)(\tilde{i}(x))$ for every $x\in D$ and $\phi\in \pi_1(\Sigma)$. 
 We will  identify $\Sigma$ and  $D$ with their images in $M$ and $\H^3$ under the immersions $i$ and $\tilde i$, respectively, depending on the context. 
 If $\Lambda(G)\subset S^2$ denotes the limit set of $G$, defined as the set of accumulation points in  $\overline{\mathbb{H}}^3$ of an orbit $Gx$, for $x\in \mathbb{H}^3$, then $\partial_{\infty}D=\Lambda(G)$. The group $G$ preserves $\Lambda(G)$, and if $H\in \Pi$ is conjugate to $G$ there is $\phi\in \Gamma$ so that $\phi(\Lambda(H))=\Lambda(G)$. The group $G$ is called quasi-Fuchsian if the limit set $\Lambda(G)$ is a Jordan curve, in which case it is   a quasi-circle (see \cite[Prop 8.7.2]{thurston}). We define $\mathcal S_\eta(M)$ to be the set of all conjugacy classes of quasi-Fuchsian surface groups with limit set  in $QC_\eta$.

If $g$ is a Riemannian metric on $M$, we denote the area of $\Sigma\in \Pi$ with respect to $i^*g$ by $\text{area}_g(\Sigma)$.   For  a homotopy class  $\Pi$, we define
\begin{equation}\label{minimal.area.defi}
\text{area}_{g}(\Pi)= {\text{inf}} \{\text{area}_{g}(\Sigma): \Sigma\in \Pi\}.
\end{equation}
There exists  an immersed minimal surface (with respect to $g$) $\tilde{\Sigma} \in\Pi$ with
$\text{area}_g(\tilde{\Sigma})=\text{area}_{g}(\Pi)$ (e.g. \cite{schoen-yau}).  We set $\text{area}(\Pi)=\text{area}_{\overline{g}}(\Pi)$. 
 
Each $\sigma\in QC_{0}$ has a totally geodesic disk as the unique solution to the asymptotic Plateau problem.  A standard compactness argument shows the existence of $\tilde{\eta}>0$ such that for any $\sigma\in QC_{\tilde{\eta}}$ the area minimizing surface $D\subset \H^3$ with $\partial_{\infty}D=\sigma$ produced in \cite[Theorem 2.1]{anderson} has principal curvatures bounded from above  by a constant  $\lambda <1$. Uhlenbeck \cite{uhlenbeck} uniqueness argument can be applied in this setting (\cite[Lemma 2.2]{jiang}) to show that $D$ is a disk, unique, and we set $D(\sigma)=D$ (see also \cite{huang-lowe-seppi} for other properties). Hence any $\Pi\in \mathcal{S}_{\tilde{\eta}}(M)$ has  a unique minimal  surface $\Sigma\in \Pi$ which we denote by $\Sigma(\Pi)$  (\cite{uhlenbeck}).

{\subsection{Minimal surface entropy} In \cite{calegari-marques-neves}, Calegari and the authors defined  the {\em minimal surface entropy} $E(g)$ of $(M,g)$. This is the invariant
\begin{equation}
E(g)=\lim_{\eta\to 0}\liminf_{L\to\infty}4\pi \frac{\ln \#\{\Pi\in {\mathcal S}_{\eta}(M): \text{area}_g(\Pi)\leq L\}}{L\ln L}.
\end{equation}

Let 
\begin{equation}\label{quantitya}
A(g,\eta)=\inf\{\text{area}_g(\Pi)/\text{area}(\Pi):\Pi\in\mathcal S_{\eta}(M)\},
\end{equation}
and 
$A(g)=\lim_{\eta\to 0}A(g,\eta).$

\begin{prop}\label{alternative.definition}The minimal surface entropy satisfies
\begin{eqnarray*}
E(g)&=&\lim_{\eta\to 0}\limsup_{L\to\infty}4\pi \frac{\ln \#\{\Pi\in {\mathcal S}_{\eta}(M): \text{\rm area}_g(\Pi)\leq L\}}{L\ln L}\\
&=&\frac{2}{A(g)}.
\end{eqnarray*}
 \end{prop}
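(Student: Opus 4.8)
The plan is to show two things: first, that the minimal surface entropy $E(g)$, defined with a $\liminf$ over $L$, equals the expression with $\limsup$; and second, that this common value equals $2/A(g)$. Since $A(g,\eta)$ is nonincreasing as $\eta \to 0$ (because $\mathcal{S}_\eta(M)$ grows with $\eta$) and bounded below, $A(g) = \lim_{\eta\to 0} A(g,\eta)$ exists, so the statement makes sense. The core is a counting estimate: for fixed small $\eta$, I would compare the counting function
$$
N_\eta(L) := \#\{\Pi\in\mathcal{S}_\eta(M):\text{area}_g(\Pi)\le L\}
$$
with the corresponding counting function for the \emph{hyperbolic} areas, namely $\#\{\Pi\in\mathcal{S}_\eta(M):\text{area}(\Pi)\le L'\}$. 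Because $\text{area}_g(\Pi)/\text{area}(\Pi)$ is pinched between $A(g,\eta)$ and some upper bound (which also converges to a finite limit as $\eta\to 0$, by a symmetric argument using $A(\bar g/g,\eta)$ — essentially the geodesic/quasi-geodesic comparison between the two metrics), the $g$-counting function and the $\bar g$-counting function are comparable after rescaling $L$ by the appropriate ratio. More precisely, $\{\text{area}(\Pi)\le L/\text{(upper bound)}\}\subset\{\text{area}_g(\Pi)\le L\}\subset\{\text{area}(\Pi)\le L/A(g,\eta)\}$.

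The second ingredient I would invoke is the known asymptotic count for the hyperbolic metric: the result of \cite{calegari-marques-neves} giving $E(\bar g)=2$, which in the sharp form means
$$
\lim_{\eta\to 0}\liminf_{L\to\infty}\frac{\ln\#\{\Pi\in\mathcal{S}_\eta(M):\text{area}(\Pi)\le L\}}{L\ln L}=\frac{1}{2\pi},
$$
and — this is the point that makes the $\liminf=\limsup$ claim work — that the $\limsup$ version has the same value. I would actually want to go back to the mechanism behind that count (it comes from counting via the Kahn–Markovic surface subgroup construction / the structure of $\mathcal{S}_\eta(M)$, where the growth rate is genuinely governed by $\text{area}(\Pi)$), so that the double limit in $\eta$ and $L$ is robust enough that replacing $\liminf_L$ by $\limsup_L$ does not change it. Granting that, the squeeze above gives
$$
\frac{\ln N_\eta(L)}{L\ln L}\ \sim\ \frac{1}{2\pi}\cdot\frac{1}{(\text{ratio})},
$$
where $\ln(L/c)\sim\ln L$ absorbs the multiplicative constant $c$ inside the logarithm, while the factor $1/(\text{ratio})$ survives because it multiplies $L$ itself. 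Sending $\eta\to 0$, the ratio tends to $A(g)$, and multiplying by $4\pi$ yields $E(g)=\dfrac{4\pi}{2\pi A(g)}=\dfrac{2}{A(g)}$, simultaneously for both $\liminf$ and $\limsup$.

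The main obstacle is the transition $\liminf_L \to \limsup_L$. Squeezing $N_\eta(L)$ between two hyperbolic counting functions only helps if those hyperbolic counts themselves have matching $\liminf$ and $\limsup$ at the level of $\frac{\ln(\cdot)}{L\ln L}$ — i.e. one really needs the \emph{two-sided} asymptotic $\ln\#\{\text{area}(\Pi)\le L\}=\frac{L\ln L}{2\pi}(1+o(1))$ coming out of \cite{calegari-marques-neves}, not just the lower density statement packaged as $E(\bar g)=2$. I would therefore devote the bulk of the argument to extracting this two-sided estimate (uniformly enough in $\eta$), and to verifying that the subleading errors — the $\ln c$ terms from rescaling $L$, and the gap between $A(g,\eta)$ and the upper ratio bound — are $o(L\ln L)$ after taking logarithms and then vanish in the $\eta\to 0$ limit. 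The comparison $\text{area}_g(\Pi)\asymp\text{area}(\Pi)$ with ratio converging to $A(g)$ from below and to a finite limit from above is routine quasi-conformal/quasi-isometry bookkeeping once $\eta$ is small enough that all surfaces in $\mathcal{S}_\eta(M)$ have uniformly bounded geometry, so I would not dwell on it.
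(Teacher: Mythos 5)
Your upper bound argument (the inclusion $\{\text{area}_g(\Pi)\le L\}\subset\{\text{area}(\Pi)\le L/A(g,\eta)\}$, combined with a hyperbolic genus count) is essentially what the paper does, except the paper applies the Kahn--Markovic count $\#\{g(\Pi)\le h\}\le (c_2 h)^{2h}$ directly after a Gauss--Bonnet estimate, rather than invoking the hyperbolic entropy result as a black box. The problem is the lower bound, and there your squeeze genuinely fails. The inclusion $\{\text{area}(\Pi)\le L/B\}\subset\{\text{area}_g(\Pi)\le L\}$ requires a \emph{uniform upper bound} $B$ on the ratio $\text{area}_g(\Pi)/\text{area}(\Pi)$ over $\Pi\in\mathcal S_\eta(M)$, and the best such $B$ is $B(g,\eta)=\sup_{\Pi}\text{area}_g(\Pi)/\text{area}(\Pi)$, which in general does \emph{not} converge to $A(g)=\lim_\eta\inf_{\Pi}\text{area}_g(\Pi)/\text{area}(\Pi)$. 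Your squeeze only yields $2/B(g)\le \liminf \le \limsup \le 2/A(g)$, which leaves a gap precisely where the proposition needs equality. This is not a removable technicality: the paper's own discussion around $E_{\overline\nu}(g)$ versus $E(g)$ and Proposition \ref{not.true} shows that the ratio genuinely varies over $\mathcal S_\eta(M)$ when $(M,\overline g)$ contains closed totally geodesic surfaces.

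The missing idea is the covering construction. The paper takes a specific sequence $\Pi_i\in\mathcal S_{1/i}(M)$ whose ratio $\text{area}_g(\Pi_i)/\text{area}(\Pi_i)$ converges to $A(g)$, then counts finite-index (degree-$k$) subgroups $C(i,k)$ of $\Pi_i$ using the M\"uller--Puchta formula. A $k$-cover $\Lambda$ of $\Pi_i$ satisfies $\text{area}(\Lambda)=k\,\text{area}(\Pi_i)$ and $\text{area}_g(\Lambda)\le k\,\text{area}_g(\Pi_i)$, so these covers all have ratio at most that of $\Pi_i$, i.e.\ close to $A(g)$, and there are $\gtrsim (c_i g(i,k))^{2g(i,k)}$ of them. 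This manufactures exponentially many surfaces whose $g$-area is controlled by $\approx A(g)\cdot\text{area}$, which is exactly what is needed for $\liminf\ge 2/A(g)$. Without this step (or something equivalent forcing the relevant ratios to concentrate near $A(g)$), a two-sided hyperbolic asymptotic alone cannot close the argument. You correctly identified the need for a two-sided count, but the decisive obstruction is the mismatch between $A(g)$ and the upper ratio bound, which your outline does not address.
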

\begin{proof}
Let $\Pi\in S_\eta(M)$. If $\Sigma \in \Pi$ is a minimal surface in the hyperbolic metric $\overline{g}$, then by the Gauss-Bonnet theorem
$$
\text{area}(\Sigma)=4\pi(g(\Pi)-1)-\frac12 \int_\Sigma |A_\Sigma|^2 dA_{\overline{g}},
$$
where $g(\Pi)$ is the genus of $\Sigma$ and $A_\Sigma$ is its second fundamental form. By \cite{seppi}, $|A_\Sigma|\leq C \ln(1+\eta)$. Hence
$$
\lim_{\eta\rightarrow 0} \frac{\text{area}(\Pi)}{4\pi(g(\Pi)-1)}=1.
$$

 If $\text{area}_g(\Pi)\leq L$, then we have from \eqref{quantitya} that $\text{area}(\Pi)A(g,\eta)\leq L$.  Hence, for any $\alpha>0$, there exists $\eta'>0$ such that if $0<\eta<\eta'$ then
$$
4\pi(g(\Pi)-1)\leq (1+\alpha)\text{area}(\Pi)\leq (1+\alpha)A(g,\eta)^{-1}L.
$$
In particular, $g(\Pi)\leq h= (4\pi)^{-1}(1+\alpha)A(g,\eta)^{-1}L+1$. Therefore
\begin{eqnarray*}
\#\{\Pi\in \mathcal S_{\eta}(M): \text{area}_g(\Pi)\leq L\}
\leq \#\{\Pi\in \mathcal S_{\eta}(M) : g(\Pi)\leq h\}
\leq (c_2 h)^{2h},
\end{eqnarray*}
where we are using the work of Kahn and Markovic \cite{kahn-markovic} in the last inequality. Since
\begin{multline*}
4\pi \frac{\ln \#\{\Pi\in {\mathcal S}_{\eta}(M): \text{area}_g(\Pi)\leq L\}}{L\ln L}\leq 4\pi \frac{2h\ln(h)+2h\ln c_2}{L\ln L}\\
\leq 4\pi \frac{2(L^{-1}h) L\ln(L)+2(L^{-1}h)\ln(L^{-1}h)L+(2L^{-1}h\ln c_2) L}{L\ln L},
\end{multline*}
 it follows that
$$
\limsup_{L\to\infty}4\pi \frac{\ln \#\{\Pi\in {\mathcal S}_{\eta}(M): \text{area}_g(\Pi)\leq L\}}{L\ln L}\leq 8\pi(4\pi)^{-1}(1+\alpha)A(g,\eta)^{-1}.
$$
Therefore, since $\alpha>0$ is arbitrary,
\begin{equation}\label{limsup.entropy}
\lim_{\eta\to 0}\limsup_{L\to\infty}4\pi \frac{\ln \#\{\Pi\in {\mathcal S}_{\eta}(M): \text{area}_g(\Pi)\leq L\}}{L\ln L}\leq \frac{2}{A(g)}.
\end{equation}

Let   $\Pi_i\in\mathcal S_{1/i}(M)$ be a sequence so that  $$A(g)=\lim_{i\rightarrow \infty}\frac{\text{area}_g(\Pi_i)}{\text{area}(\Pi_i)}.$$ 
Denote by $C(i,k)$  the set of all index $k$ subgroups of $\Pi_i$. The elements of $C(i,k)$ correspond to surface subgroups obtained by taking a $k$-degree cover of an essential surface in $\Pi_i$. The essential surfaces corresponding to an element of  $C(i,k)$ have genus $g(i,k)=k(g(\Pi_i)-1)+1$.

If $\Lambda\in C(i,k)$, then $\Lambda\in \mathcal{S}_{1/i}(M)$ and  $\text{area}_g(\Lambda)\leq k\,\text{area}_g(\Pi_i)$. Hence
$$
\#\{\Pi\in \mathcal S_{1/i}(M): \text{area}_g(\Pi)\leq k\,\text{area}_g(\Pi_i)\}
 \geq \#C(i,k).
 $$
 We use the M\"uller-Puchta's formula as in the proof of \cite[Theorem 4.2]{calegari-marques-neves} to deduce  the existence of a constant $c_i>0$ so that $\#C(i,k)\geq (c_ig(i,k))^{2g(i,k)}$. 
 
 If $L\in [k\,\text{area}_g(\Pi_i), (k+1)\,\text{area}_g(\Pi_i))$, then
\begin{eqnarray*}
&&\frac{\ln \#\{\Pi\in {\mathcal S}_{1/i}(M): \text{area}_g(\Pi)\leq L\}}{L\ln L}\\
&&\geq \frac{\ln \#\{\Pi\in {\mathcal S}_{1/i}(M): \text{area}_g(\Pi)\leq k\,\text{area}_g(\Pi_i)\}}{(k+1)\,\text{area}_g(\Pi_i)\ln ((k+1)\,\text{area}_g(\Pi_i))}\\
&&\geq \frac{\ln \# C(i,k)}{(k+1)\,\text{area}_g(\Pi_i)\ln ((k+1)\,\text{area}_g(\Pi_i))}\\
&&\geq \frac{2g(i,k) \ln(c_ig(i,k))}{(k+1)\,\text{area}_g(\Pi_i)\ln ((k+1)\,\text{area}_g(\Pi_i))}\\
&&=\frac{\{2k(g(\Pi_i)-1)+2\} (\ln (k(g(\Pi_i)-1)+1)+\ln(c_i))}{(k+1)\,\text{area}_g(\Pi_i)(\ln (k+1) + \ln \text{area}_g(\Pi_i))}.
\end{eqnarray*}
This implies
 \begin{eqnarray*}
4\pi \liminf_{L\to\infty}\frac{\ln \#\{\Pi\in {\mathcal S}_{1/i}(M): \text{area}_g(\Pi)\leq L\}}{L\ln L}\geq 4\pi\frac{2(g(\Pi_i)-1)}{\text{area}_g(\Pi_i)}.
 \end{eqnarray*}
 Therefore, letting $i\rightarrow \infty$,
\begin{equation}\label{liminf.entropy}
 \lim_{\eta\to 0}\liminf_{L\to\infty}4\pi \frac{\ln \#\{\Pi\in {\mathcal S}_{\eta}(M): \text{area}_g(\Pi)\leq L\}}{L\ln L}
\geq \frac{2}{A(g)}.
 \end{equation}
 The proposition follows by combining (\ref{limsup.entropy}) and (\ref{liminf.entropy}).

\end{proof}

If $\Lambda$ is a subgroup of $\Pi$ of finite index, with $\Pi\in S_\eta(M)$, $\eta<\tilde{\eta}$, where $\tilde{\eta}$ is as in the introduction,  by uniqueness $\Sigma(\Lambda)$ is a degree $k$ cover of $\Sigma(\Pi)$ and hence
 $\nu_\Lambda=\nu_\Pi$.  This allows to generalize Proposition \ref{alternative.definition} to the entropies $E_{\mu}(g)$ with the same proof.  It follows that
 \begin{equation}\label{alternative.entropy.mu}
  E_\mu(g)=\lim_{\eta\to 0}\limsup_{L\to\infty}4\pi \frac{\ln \#\{\Pi\in {\mathcal S}_{\eta,\mu}(M): \text{\rm area}_g(\Pi)\leq L\}}{L\ln L}  
=\frac{2}{ A_\mu(g)},
 \end{equation}
 where $A_\mu(g)=\lim_{\eta\to 0}\inf\{\text{area}_g(\Pi)/\text{area}(\Pi):\Pi\in\mathcal S_{\eta,\mu}(M)\}$. As a result, $E_{\mu}(\overline g)=2$.

\section{Conformal currents and the  intersection form}\label{intersection}

Recall that $\Gamma\subset  \text{Isom}_+(\H^3)$ is  a discrete group of isometries   such that $M=\H^3/\Gamma$ is a closed  manifold. 
In \cite{bonahon}, Bonahon defined a {\em geodesic current} to be  a $\Gamma$-invariant Radon measure on $\mathcal G$.  The space of geodesic currents is denoted by $\mathcal C (\Gamma)$. Labourie \cite[Section 5.2]{labourie} defined a {\em conformal current} to be a $\Gamma$-invariant Radon measure on some $QC_\eta$.  Let us denote the space of conformal currents  by ${\mathcal{C}_2}(\Gamma)$
(see \cite{kapovich-nagnibeda} for a definition based on closed sets).

The results of this section overlap with those of Brody and Reyes \cite{brody-reyes},  which introduced the notion of   currents on systems of spheres  generalizing  conformal currents. The statements of this section and their proofs were  developed independently. 

\subsection{Intersection number}

In \cite{bonahon}, Bonahon defined the intersection number between geodesic currents. We will use an intersection number between geodesic currents and conformal currents.

Given $(\sigma,\{x,y\})\in QC_\eta\times \mathcal G$, its (unoriented) linking number $\text{lk}(\sigma,\{x,y\})$ is  $1$ if $x,y$  belong to separate connected components of $S^2-\sigma$ and  $0$ otherwise. In particular, if $\text{lk}(\sigma,\{x,y\})=1$ then $\sigma\cap \{x,y\}=\emptyset$.

Consider the open set 
\begin{equation}\label{parameter.space}
X=\{(\sigma,u)\in  QC_\eta\times \mathcal G: \text{lk}(\sigma,u)=1\}
\end{equation} 
in $QC_\eta\times \mathcal G$.
The space $X$ is  a locally compact Hausdorff space.  It is also $\sigma$-compact, since the set
$$
\{(\sigma, \{x,y\}) \in X: \sigma\in K_r, d_{S^2}(x,y)\geq r, d_{S^2}(x, \sigma)\geq r, d_{S^2}(y, \sigma)\geq r\}
$$
is compact, where $K_r$ is as in Section \ref{quasicircles}.

The group $\Gamma$ acts  on $X$ diagonally. In what follows we refer to the notation introduced in Section \ref{convex.hull}. For every $(\sigma,\partial_{\infty}\gamma)\in X$, with $\gamma\in G(\H^3)$,  the compact set $\text{conv}(\sigma)\cap \gamma$ is non-empty. This is  because in \cite[Theorem 4.1]{anderson} it is produced a minimal disk  $D\subset \H^3$ with $\partial_{\infty}D=\sigma$, which intersects $\gamma$,  and is contained in $\text{conv}(\sigma)$}.

\begin{lemm}\label{properly.discontinuous}
The group $\Gamma$ acts  freely and properly discontinuously on $X$. Hence  $X/\Gamma$ is a locally compact Hausdorff space.
\end{lemm}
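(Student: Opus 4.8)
The plan is to show freeness first, then proper discontinuity, and finally deduce the Hausdorff and local compactness of $X/\Gamma$ from the fact that $\Gamma$ acts properly discontinuously and freely on the locally compact Hausdorff $\sigma$-compact space $X$. For freeness: suppose $\phi\in\Gamma$ fixes some $(\sigma,\partial_\infty\gamma)\in X$, i.e. $\phi(\sigma)=\sigma$ and $\phi$ preserves the unoriented pair $\partial_\infty\gamma=\{x,y\}$. Then $\phi$ preserves the geodesic $\gamma$ in $\H^3$, so $\phi$ is either the identity, or a hyperbolic/loxodromic element with axis $\gamma$, or an elliptic rotation about $\gamma$, or it swaps $x$ and $y$. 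Since $\Gamma$ acts freely on $\H^3$ (as $M=\H^3/\Gamma$ is a closed manifold, $\Gamma$ is torsion-free), $\phi$ cannot be elliptic; if $\phi$ were loxodromic with axis $\gamma$ or swapped the endpoints while preserving $\sigma$, I would derive a contradiction with $\mathrm{lk}(\sigma,\{x,y\})=1$: the fixed point set of a nontrivial $\phi$ on $S^2$ is $\{x,y\}$ (loxodromic) or a single point/circle, and an invariant Jordan curve $\sigma$ separating $x$ from $y$ is incompatible with $\phi$ having both its fixed points $x,y$ lying in different components that $\phi$ must then preserve or swap — more carefully, a loxodromic $\phi$ has a source and a sink in $S^2$, and any $\phi$-invariant Jordan curve must contain both or separate them in a way forbidding invariance; the cleanest route is that a loxodromic fixing $\sigma$ setwise forces $\sigma$ to pass through $x$ and $y$ (limit set considerations), contradicting $\sigma\cap\{x,y\}=\emptyset$. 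Thus $\phi=\mathrm{id}$.

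For proper discontinuity, the key is the nonemptiness and compactness of $\mathrm{conv}(\sigma)\cap\gamma\subset\H^3$ observed just before the lemma (coming from \cite[Theorem 4.1]{anderson}): define a $\Gamma$-equivariant ``center'' map $c\colon X\to\H^3$, for instance $c(\sigma,\gamma)=$ the point of $\gamma$ realizing the minimal distance to $\mathrm{conv}(\sigma)$, or the midpoint of the (compact, hence finite-length, after suitable truncation) intersection $\gamma\cap\mathrm{conv}(\sigma)$, or simply any point of $\mathrm{conv}(\sigma)\cap\gamma$ selected canonically; the continuity properties of the convex hull and of $\gamma$ under convergence in $QC_\eta\times\mathcal G$ recorded in Section \ref{convex.hull} (via \cite[Theorem 1.4]{bowditch}) show $c$ is continuous. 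For a compact set $K\subset X$, $c(K)$ is a compact subset of $\H^3$, and if $\phi(K)\cap K\neq\emptyset$ then $\phi(c(K))\cap c(K)\neq\emptyset$, i.e. $\phi\cdot c(K)$ meets $c(K)$; since $\Gamma$ acts properly discontinuously on $\H^3$, only finitely many $\phi\in\Gamma$ satisfy this, so only finitely many $\phi$ satisfy $\phi(K)\cap K\neq\emptyset$. That is proper discontinuity of the $\Gamma$-action on $X$.

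Given freeness and proper discontinuity of a group action on a locally compact Hausdorff space $X$, the quotient $X/\Gamma$ is again locally compact and Hausdorff (the quotient map is open, proper discontinuity gives that the orbit equivalence relation is closed, which yields Hausdorffness; local compactness passes to open Hausdorff quotients), which finishes the lemma. The main obstacle I expect is the freeness step — specifically, ruling out a nontrivial loxodromic $\phi\in\Gamma$ that preserves both $\sigma$ and $\{x,y\}$: one must argue that an invariant Jordan curve for such a $\phi$ cannot separate its two fixed points without passing through them, which requires a careful look at the dynamics of loxodromic Möbius transformations on $S^2$ acting on invariant Jordan curves (all orbits accumulate only at the two fixed points, so an invariant $\sigma$ disjoint from them would have to be a single $\phi$-orbit closure, impossible for a Jordan curve). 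The proper-discontinuity step, by contrast, is routine once the equivariant continuous map $c\colon X\to\H^3$ is in hand, and the continuity of $c$ is immediate from the Bowditch continuity of convex hulls already cited.
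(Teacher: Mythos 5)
Your overall route is the same as the paper's: rule out nontrivial stabilizers using the dynamics on $S^2$ together with the absence of elliptics in $\Gamma$, and obtain proper discontinuity by mapping each pair $(\sigma,u)\in X$ to a bounded region of $\H^3$ (built from $\gamma_u\cap\mathrm{conv}(\sigma)$) and invoking proper discontinuity of the $\Gamma$-action on $\H^3$. Your treatment of the case where $\phi$ fixes both endpoints $x,y$ of $\gamma$ is correct and essentially the paper's: a loxodromic with an invariant Jordan curve forces that curve to contain both fixed points, contradicting $\sigma\cap\{x,y\}=\emptyset$. But the swap case has a genuine gap. You list ``swaps $x$ and $y$'' as a case distinct from the elliptic one and propose to derive a contradiction from $\mathrm{lk}(\sigma,\{x,y\})=1$; that plan does not work --- a transformation interchanging $x$ and $y$ that also interchanges the two components of $S^2\setminus\sigma$ is perfectly compatible with $\sigma$ separating them, so the linking number gives no contradiction. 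The correct observation, which the paper makes, is that such a $\phi$ reverses the orientation of the hyperbolic geodesic through $x,y$ and therefore fixes a point of $\H^3$, hence is elliptic; torsion-freeness of $\Gamma$ then forces $\phi=\mathrm{id}$. In other words, swap $\Rightarrow$ elliptic, so the very ``no elliptics'' observation you already have kills this case --- but you treat the two as disjoint and never make the connection, leaving the case unresolved as written.

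For proper discontinuity, your plan of building a continuous $\Gamma$-equivariant selection $c:X\to\H^3$ is heavier than needed, and none of the proposed candidates is visibly continuous. The ``point of $\gamma$ minimizing distance to $\mathrm{conv}(\sigma)$'' is ill-posed, since $\gamma$ meets $\mathrm{conv}(\sigma)$ and the minimizing set is the whole segment $\gamma\cap\mathrm{conv}(\sigma)$. The ``midpoint of $\gamma\cap\mathrm{conv}(\sigma)$'' is well-defined and equivariant, but Hausdorff convergence of $\mathrm{conv}(\sigma_i)$ and $\overline\gamma_i$ (Bowditch) does not by itself give Hausdorff convergence of the intersection segments, so continuity needs an extra argument you don't supply. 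The paper sidesteps any selection: for compact $\mathcal K\subset X$ it proves directly that $K=\bigcup_{(\sigma,u)\in\mathcal K}\gamma_u\cap\mathrm{conv}(\sigma)$ is a compact subset of $\H^3$ (sequential compactness via Bowditch, using the linking condition to keep $\overline\gamma\cap\mathrm{conv}(\sigma)$ away from $S^2$), then uses $\phi(\mathrm{conv}(Y))=\mathrm{conv}(\phi Y)$ to turn $\phi(\mathcal K)\cap\mathcal K\neq\emptyset$ into $\phi(K)\cap K\neq\emptyset$. Note that if you drop continuity and take $c$ to be an arbitrary selection of a point in $\gamma\cap\mathrm{conv}(\sigma)$, your argument does go through provided you prove $c(\mathcal K)$ is bounded in $\H^3$ --- but that boundedness is precisely the paper's compactness claim, so nothing is saved. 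Your final deduction that a free properly discontinuous action on a locally compact Hausdorff space yields a locally compact Hausdorff quotient is standard and correct.
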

\begin{proof}
Suppose $\phi\in\Gamma$ and $(\sigma,\{x,y\})\in X$ are such that $\phi(\sigma,\{x,y\})=(\sigma,\{x,y\})$, with $\phi \neq {\rm id}$. Then $\phi$ leaves  $\{x,y\}$ invariant.  If $\phi(x)=y$ then $\phi$ sends the geodesic connecting $x$ and $y$ to itself with the reverse orientation. This implies that $\phi$ has a fixed point in $\mathbb{H}^3$ which is a contradiction. Hence $\phi(x)=x$ and $\phi(y)=y$. Since  $\sigma$ does not intersect $\{x,y\}$, it follows that  $\phi^n(\sigma)$ converges to either $x$  or $y$ as $n\to\infty$. This is impossible because $\phi$ fixes $\sigma$. This proves $\Gamma$ acts freely.

To show that the action is properly discontinuous,  we prove that for every compact set $\mathcal K\subset X$, the set  $\{\phi\in \Gamma:\phi(\mathcal K)\cap \mathcal K\neq \emptyset\}$  is finite.
 Define $K=\cup_{\{(\sigma,\partial_{\infty}\gamma)\in \mathcal K\}} \{\gamma\cap \text{conv}(\sigma)\}$.
 
 \medskip
 
 {\bf Claim:}   $K\subset \H^3$ is compact.
 
 \medskip
 
 Let  $\{x_i\}\subset K$ be a sequence, with $x_i\in \gamma_i\cap \text{conv}(\sigma_i)$ for some $(\sigma_i,\partial_{\infty}\gamma_i)\in \mathcal K$.  By compactness there is $\{x_j\}\subset \{x_i\}$ with  $(\sigma_j,\partial_{\infty}\gamma_j)\to (\sigma,\partial_{\infty}\gamma)\in \mathcal K$. Theorem 1.4 in \cite{bowditch} implies that $\text{conv}(\sigma_j)\rightarrow {\rm conv}(\sigma)$ and
$\overline{\gamma}_j\rightarrow \overline{\gamma}$, both in the Hausdorff topology for the Euclidean metric. Hence
there exists $\{x_k\}\subset \{x_j\}$  that converges in the Euclidean metric to a point $p\in \overline{\gamma} \cap \text{conv}(\sigma)$.
Since $(\sigma,\partial_{\infty}\gamma)\in \mathcal K$, the set $\overline{\gamma} \cap \text{conv}(\sigma)$ is  compact  in
$\mathbb{H}^3$. Hence $x_k\rightarrow  x\in K$  in the hyperbolic metric.  This proves the claim.

\medskip

 If $\phi\in \Gamma$ and $(\sigma,\partial_{\infty}\gamma)\in \mathcal K$  are such that  
$(\phi(\sigma),\phi(\partial_{\infty}\gamma))\in \mathcal K$, then  $\phi(K)\cap K\neq \emptyset$. This is because $\phi({\rm conv}(Y))={\rm conv}(\phi(Y))$ for any $Y$. Since the group $\Gamma$ acts 
properly discontinuously on $\H^3$, it follows that  the set $\{\phi\in \Gamma: \phi(K)\cap K\neq \emptyset\}$ is finite which finishes the proof.

 \end{proof}

Let $P:X\rightarrow X/\Gamma$ denote the quotient map. Given $f\in C_c(X)$, set 
$f^\Gamma(x)=\sum_{\phi\in\Gamma}f(\phi(x))$. The function $f^\Gamma$ is well defined due to Lemma \ref{properly.discontinuous}. Since  $f^\Gamma$ is   $\Gamma$-invariant,  there is $\tilde{f}\in C_c(X/\Gamma)$ so that $f^\Gamma=\tilde{f} \circ P$.

For $(\mu,\lambda)\in {\mathcal C}_2(\Gamma)\times \mathcal C(\Gamma)$, the $\Gamma$-invariant measure $\mu\times\lambda$  induces a unique Radon measure  $[\mu\times \lambda]$ on the quotient space $X/\Gamma$ so that
$$[\mu\times\lambda](\tilde{f})=(\mu\times\lambda)(f), \quad f\in C_c(X).$$

\begin{lemm}
If  $(\mu,\lambda)\in {\mathcal C}_2(\Gamma)\times \mathcal C(\Gamma)$, then $[\mu\times\lambda](X/\Gamma)<\infty$.
\end{lemm}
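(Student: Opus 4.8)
The plan is to bound $[\mu\times\lambda](X/\Gamma)$ by covering $X/\Gamma$ with finitely many sets of finite measure, using the compactness structure established earlier. Since $X/\Gamma$ is $\sigma$-compact, it suffices to show that every compact subset of $X/\Gamma$ has finite $[\mu\times\lambda]$-measure and that $X/\Gamma$ is covered by countably many such compacta whose measures are uniformly controlled — but in fact the cleanest route is to produce a single compact \emph{fundamental-type} region. Concretely, I would fix a compact fundamental domain $K_0\subset\H^3$ for the action of $\Gamma$ on $\H^3$ (which exists since $M=\H^3/\Gamma$ is closed). For $(\sigma,\partial_\infty\gamma)\in X$ the set $\gamma\cap\mathrm{conv}(\sigma)$ is nonempty and compact (as recalled before Lemma \ref{properly.discontinuous}, using \cite[Theorem 4.1]{anderson}); pick a point $q(\sigma,\partial_\infty\gamma)$ in it. Translating by $\Gamma$, every $\Gamma$-orbit in $X$ has a representative $(\sigma,\partial_\infty\gamma)$ with $q(\sigma,\partial_\infty\gamma)\in K_0$. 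So if
$$
\mathcal K=\{(\sigma,\partial_\infty\gamma)\in X:\ (\gamma\cap\mathrm{conv}(\sigma))\cap K_0\neq\emptyset\},
$$
then $P(\mathcal K)=X/\Gamma$. The key point is then that $[\mu\times\lambda](X/\Gamma)\leq(\mu\times\lambda)(\mathcal K)$, because the $\Gamma$-translates of $\mathcal K$ cover $X$ and $[\mu\times\lambda]$ is the push-forward, so it is enough to show $(\mu\times\lambda)(\mathcal K)<\infty$.

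To see $(\mu\times\lambda)(\mathcal K)<\infty$, I would further decompose. Since $\mu$ is a Radon measure on some $QC_\eta$ and $QC_\eta=\cup_n K_{1/n}$ with each $K_{1/n}$ compact (Section \ref{quasicircles}), and similarly $\mathcal G$ is $\sigma$-compact, one reduces to showing that for each $r>0$ the set
$$
\mathcal K_r=\{(\sigma,\{x,y\})\in\mathcal K:\ \sigma\in K_r,\ d_{S^2}(x,y)\geq r,\ d_{S^2}(x,\sigma)\geq r,\ d_{S^2}(y,\sigma)\geq r\}
$$
has finite $\mu\times\lambda$ measure. But $\mathcal K_r$ is a \emph{closed} subset of the set displayed in the $\sigma$-compactness discussion of $X$ right after \eqref{parameter.space}, which is compact; the extra constraint defining $\mathcal K$ (that $\gamma\cap\mathrm{conv}(\sigma)$ meets the bounded set $K_0$) is a closed condition by the continuity of convex hulls and geodesics in the Hausdorff topology (\cite[Theorem 1.4]{bowditch}, exactly as in the Claim inside the proof of Lemma \ref{properly.discontinuous}). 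Hence $\mathcal K_r$ is compact, and a Radon measure is finite on compact sets, so $(\mu\times\lambda)(\mathcal K_r)<\infty$.

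The remaining gap is that the countably many $\mathcal K_r$ (over $r=1/n$) exhaust $\mathcal K$ but a priori their measures need not sum to something finite; so the single-region argument above is the one to push. The honest fix: $\mathcal K$ itself need not be compact, but one shows it is contained in a \emph{fixed} compact set. Indeed the constraint $(\gamma\cap\mathrm{conv}(\sigma))\cap K_0\neq\emptyset$ forces $\sigma$ and $\gamma$ to ``stay near $K_0$'': if $p\in(\gamma\cap\mathrm{conv}(\sigma))\cap K_0$, then $\mathrm{conv}(\sigma)$ contains $p$, which bounds $\sigma$ away from being concentrated in a small ball of $S^2$ (so $\sigma\in K_{r_0}$ for a uniform $r_0$ depending only on $\mathrm{diam}(K_0)$), and $\gamma$ passes through $p\in K_0$, which confines $\partial_\infty\gamma$ to a compact subset of $\mathcal G$; moreover $\mathrm{lk}(\sigma,\{x,y\})=1$ forces positive distances between $x,y$ and between each of them and $\sigma$, so $\mathcal K$ lands in a set of the form $\mathcal K_{r_0}$ as above. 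Therefore $\mathcal K$ is precompact in $X$, $(\mu\times\lambda)(\overline{\mathcal K})<\infty$ since $\mu\times\lambda$ is Radon, and
$$
[\mu\times\lambda](X/\Gamma)\leq(\mu\times\lambda)(\overline{\mathcal K})<\infty.
$$
The main obstacle is precisely this last compactness claim — quantifying how the condition ``$\gamma\cap\mathrm{conv}(\sigma)$ meets the bounded set $K_0$'' forces a uniform quasi-circle bound and uniform separation, i.e. places $\mathcal K$ inside one of the compacta from the $\sigma$-compactness of $X$; everything else is the standard fact that a Radon measure is finite on compacta together with the push-forward description of $[\mu\times\lambda]$.
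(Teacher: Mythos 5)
Your overall strategy (fix a compact fundamental domain $K_0\subset\H^3$, restrict to the pairs $(\sigma,\partial_\infty\gamma)$ whose intersection $\gamma\cap\mathrm{conv}(\sigma)$ meets $K_0$, and invoke finiteness of Radon measures on compacta) is essentially the paper's, which uses a Dirichlet domain $\bar\Delta$ in place of $K_0$ and the set $Y=\{(\sigma,\partial_\infty\gamma):\mathrm{conv}(\sigma)\cap\gamma\cap\bar\Delta\neq\emptyset\}$ in place of your $\mathcal K$. However, your final and decisive claim --- that $\mathcal K$ is precompact \emph{in} $X$, because the condition $\mathrm{lk}(\sigma,\{x,y\})=1$ together with ``meets $K_0$'' forces uniform lower bounds on $d_{S^2}(x,y)$, $d_{S^2}(x,\sigma)$, $d_{S^2}(y,\sigma)$ --- is false. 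Fix a round circle $\sigma$ whose totally geodesic disk $D(\sigma)$ passes through a point $p\in K_0$, and take geodesics $\gamma_n$ through $p$ transverse to $D(\sigma)$ with angle $\to 0$. Each $(\sigma,\partial_\infty\gamma_n)$ lies in $\mathcal K$ (the endpoints of $\gamma_n$ sit on opposite sides of $\sigma$, so the linking number is $1$, and the intersection point stays at $p$), yet both endpoints of $\gamma_n$ converge to points of $\sigma$, so $d_{S^2}(x_n,\sigma),d_{S^2}(y_n,\sigma)\to 0$. The limit $(\sigma,\partial_\infty\gamma)$ has $\partial_\infty\gamma\subset\sigma$ and hence does not lie in $X$; $\mathcal K$ is only precompact in the ambient space $QC_\eta\times\mathcal G$, not in $X$.

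This gap is not fatal to the strategy, but it has to be filled the way the paper does: write $X=\bigcup_n K_n$ with $K_n$ compact increasing, note that $Y\cap K_n$ (equivalently $\overline{\mathcal K}\cap K_n$) is compact in $X$ for each $n$, apply the inequality $[\mu\times\lambda](P(K))\leq(\mu\times\lambda)(K)$ valid for compact $K\subset X$, and pass to the limit using that all terms are dominated by $(\mu\times\lambda)(Y)$ with $Y$ compact in $QC_\eta\times\mathcal G$. Relatedly, your justification of $[\mu\times\lambda](X/\Gamma)\leq(\mu\times\lambda)(\mathcal K)$ as ``$[\mu\times\lambda]$ is the push-forward'' is too loose: the quotient measure is \emph{not} the push-forward of $\mu\times\lambda$ under $P$ (that push-forward would be infinite). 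The inequality is proved in the paper for compact $K\subset X$ via the outer-regularity description $(\mu\times\lambda)(K)=\inf_{f\in C_K}(\mu\times\lambda)(f)$, using that each $f\in C_K$ has $\tilde f\geq 1$ on $P(K)$; you would need this (plus the exhaustion above) to make the estimate rigorous.
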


\begin{proof}
For each compact set $K\subset X$, it holds that
$[\mu\times\lambda](P(K))\leq (\mu \times \lambda) (K)$. This is because
$$
(\mu \times \lambda) (K) = \inf_{f\in C_K} (\mu\times \lambda)(f),
$$
where $C_K$ is the set of functions $f \in C_c(X)$ satisfying $0\leq f\leq 1$ on $X$ and $f=1$ on $K$. We are using that $\tilde{f}=1$ on $P(K)$ for each $f\in C_K$.

Let $\Delta\subset \H^3$ be a Dirichlet fundamental domain of $M$ with closure $\bar \Delta$ and set
\begin{equation}\label{omega.definition}
Y=\{(\sigma,\partial_{\infty}\gamma)\in QC_\eta\times \mathcal G:  \text{conv}(\sigma)\cap \gamma\cap \bar \Delta\neq \emptyset\}.
\end{equation}
It follows from  Sections \ref{quasicircles} and  \ref{convex.hull}  that $Y$ is compact.
For every $z\in X$, there is $\phi\in\Gamma$ so that $\phi(z)\in Y$. Thus $P(Y \cap X)=X/\Gamma$.  

We write  $X=\bigcup_{n\in \mathbb{N}} K_n$, where $K_n \subset K_{n+1}$ and each $K_n$ is a compact set. Hence 
$Y\cap K_n \subset X$ is a compact set. Therefore 
$$
[\mu\times\lambda](P(Y\cap K_n))\leq (\mu \times \lambda) (Y \cap K_n)\leq (\mu \times \lambda) (Y)<\infty.
$$
Since $X/\Gamma = P(Y\cap X)= \bigcup_{n\in \mathbb{N}} P(Y\cap K_n)$, we get that $[\mu\times\lambda](X/\Gamma)\leq (\mu\times\lambda)(Y)$. This finishes the proof of  the lemma.
\end{proof}


 The {\em intersection form} between conformal currents and geodesic currents is the bilinear map
$$I=I_\Gamma: {\mathcal C}_2(\Gamma)\times \mathcal C(\Gamma)\rightarrow \R$$
defined by $$I(\mu,\lambda)=[\mu\times\lambda](X/\Gamma).$$

\subsection{Examples of conformal currents and geodesic currents}\label{examples.currents}

Consider  a homotopy class of quasi-Fuchsian surfaces $\Pi$ and $G$ a surface group in $\Pi$. Let $\Gamma/G=\{\phi G:\phi\in \Gamma\}$ be the set of cosets and choose a representative $\tilde \phi$ in each coset $\phi G$. Choose $\eta>0$  so that $\Lambda(G)\in QC_{\eta}$. We consider the $\Gamma$-invariant measure (see \cite[Proposition 5.5.(ii)]{labourie})
$$
\delta_{\Pi}=\sum_{\tilde{\phi}} \delta_{\tilde{\phi}(\Lambda(G))}.
$$
That $\delta_{\Pi}$ is a locally finite measure was proven in  \cite{labourie} under the condition that $\eta<\eta_1$ for some $\eta_1>0$. To see that this  holds for any $\eta$, and thus that $\delta_{\Pi} \in {\mathcal C}_2(\Gamma)$, we proceed as follows.

 Let $\delta_{\Lambda(G)}$ be the Dirac delta supported on the set  $\{{\phi(\Lambda(G))}:\phi\in \Gamma\}$ and $H=\{\phi\in \Gamma: \phi(\Lambda(G))=\Lambda(G)\}$, so $G< H$. Since the quotient of ${\rm conv}(\Lambda(G))$ by the  action
of $G$  is compact, and $H$ preserves ${\rm conv}(\Lambda(G))$, it follows that  $|H: G|<\infty$. Then $\delta_{\Pi}=|H:G|\cdot \delta_{\Lambda(G)}$.   Suppose by contradiction that $\delta_{\Lambda(G)}$ is not locally finite. 
Then one can find  $\{\phi_i\}_i\subset \Gamma$ so that $\phi_i(\Lambda(G))\rightarrow C$ in the Hausdorff distance with $\phi_i(\Lambda(G))\neq \phi_j(\Lambda(G))$ for every $i\neq j$.  Let $p,q$ be in different connected components of $S^2 \setminus C$, and $\gamma$ be the hyperbolic geodesic connecting $p$ and $q$. Since we can find a  converging sequence $\{f_i\}_i$ of  $(1+\eta)$-quasiconformal homeomorphisms of $S^2$ with $f_i(S^1)=\phi_i(\Lambda(G))$, it follows that for sufficiently large $i$, $p$ and $q$ are in separate connected components of 
$S^2 \setminus \phi_i(\Lambda(G))$.   

Let  $q_i \in {\rm conv}( \phi_i(\Lambda(G))) \cap \gamma$.
Notice that the sequence $\{q_i\}_i$ is contained in a compact set. Then $p_i'=\phi_i^{-1}(q_i)\in {\rm conv}(\Lambda(G))$. Since  ${\rm conv}(\Lambda(G))/G$ is compact, we can find $g_i\in G$ such that   $\{p_i=g_i(p_i')\}_i$ is  in a compact set.  Because the action of 
$\Gamma$ on $\mathbb{H}^3$ is properly discontinuous, there is a sequence $\{k\}\subset \{i\}$ such that there is $\psi\in \Gamma$ with  $g_k\phi_k^{-1}=\psi$ for any $k$. Hence $\phi_k=\psi^{-1}g_k$, and so $\phi_k(\Lambda(G))=\psi^{-1}(\Lambda(G))$ for any $k$. This contradicts the fact that $\phi_i(\Lambda(G))\neq \phi_j(\Lambda(G))$ for any $i\neq j$.
Hence $\delta_{\Pi}$ is locally finite.

\medskip

The Radon measure $\nu$ defined in Section \ref{integral.geometry} with support  the set $QC_0$ is $\Gamma$-invariant and thus  $\nu\in {\mathcal C}_2(\Gamma)$. \medskip

Let $[\gamma]$ be a nontrivial homotopy class of continuous maps $S^1\rightarrow M$. There is a closed geodesic $\alpha\in \Lambda$ which lifts to a geodesic $\tilde\alpha$ in $\mathbb{H}^3$ so that $\alpha=\tilde{\alpha}/H$ for some group $H<\Gamma$  isomorphic to $\mathbb{Z}$.  Let $\Gamma/H=\{\phi H:\phi\in \Gamma\}$ be the set of cosets and pick a representative $\tilde \phi$ in each coset $\phi H$.  If $\{p,q\}\in \mathcal{G}$ denote the endpoints of $\tilde{\alpha}$, then we define the $\Gamma$-invariant measure 
$$
\delta_{[\gamma]}=\sum_{\tilde{\phi}} \delta_{\tilde{\phi}(\{p,q\})}.
$$
As for $\delta_\Pi$, the measure $\delta_{[\gamma]}$ is locally finite and hence  $\delta_{[\gamma]} \in \mathcal{C}(\Gamma)$.
\medskip

Let $g$ be a metric of negative sectional curvature on   $M$. The Liouville measure $\lambda_g$ defined  in Section \ref{integral.geometry} is $\Gamma$-invariant, locally finite, and thus  $\lambda_g\in \mathcal C(\Gamma)$.

\subsection{Convergence of currents}

A sequence of homotopy classes of quasi-Fuchsian surfaces  $\{\Pi_i\}_{i\in\N}$ {\em equidistributes} if $\Pi_i\in S_{\eta_i}(M)$, $\eta_i \rightarrow 0$,   and for every $\psi\in C({Gr_2}(M))$,
\begin{equation}\label{equidis.defi}
\lim_{i\to\infty}\frac{1}{\text{area}(\Sigma(\Pi_i))}\int_{\Sigma(\Pi_i)}\psi(x,T_x\Sigma(\Pi_i))dA(x)=\frac{1}{{\rm vol}(Gr_2(M))}\int_{{Gr_2}(M)}\psi d{V}.
\end{equation}

Then:
\begin{prop}\label{convergence} Let $\{\Pi_i\}_{i\in\N}$ be a sequence of homotopy classes of quasi-Fuchsian surfaces  that equidistributes. Then 
$$\frac{1}{{\rm area}(\Sigma(\Pi_i))}{\delta_{\Pi_i}} \rightarrow \frac{1}{2\pi{\rm vol}(M)}\, {\nu}$$
as $i\to\infty$. 
\end{prop}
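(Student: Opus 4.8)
The plan is to test the measures against an arbitrary $f\in C_c(QC_\eta)$ and show that $\frac{1}{\mathrm{area}(\Sigma(\Pi_i))}\delta_{\Pi_i}(f)$ converges to $\frac{1}{2\pi\mathrm{vol}(M)}\nu(f)$. First I would unwind the definition of $\delta_{\Pi_i}$ as a sum of Dirac masses over $\Gamma/G_i$ (with $G_i$ a surface group in $\Pi_i$), so that
$$
\delta_{\Pi_i}(f)=\sum_{\tilde\phi\in\Gamma/G_i}f\big(\tilde\phi(\Lambda(G_i))\big).
$$
Using that $\Gamma$ acts freely and properly discontinuously and that $\mathrm{conv}(\Lambda(G_i))/G_i$ is compact, I would reorganize this sum geometrically: each term corresponds to a translate of the quasi-circle $\Lambda(G_i)$ whose convex hull $\mathrm{conv}(\tilde\phi(\Lambda(G_i)))$ meets a fixed fundamental domain $\bar\Delta$, up to the finitely-many-overcounting bookkeeping that appears in the local finiteness argument of Section~\ref{examples.currents}. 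The key point is that $\delta_{\Pi_i}(f)$ can be rewritten, up to negligible error, as an integral over $\Sigma(\Pi_i)$ of the function obtained by pulling $f$ back along the map sending a point $x\in\Sigma(\Pi_i)$ (lifted to $\tilde\Sigma_i=D(\Lambda(G_i))$) to the quasi-circle bounding the leaf through $x$.

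Next I would invoke the fact (Section~\ref{essential.section}, from \cite{seppi,uhlenbeck}) that for $\eta$ small the surface $\Sigma(\Pi_i)$ is $C^2$-close to totally geodesic, with principal curvatures bounded by $\lambda<1$ and tending to $0$; consequently the lift $\tilde\Sigma_i$ is uniformly close in the cone topology to the totally geodesic disk $D(\sigma_i)$, $\sigma_i=\theta$-image, and the boundary quasi-circle $\Lambda(G_i)$ is $(1+\eta_i)$-close to the round circle $\theta(x,T_x\tilde\Sigma_i)$ for every $x\in\tilde\Sigma_i$. Therefore
$$
\delta_{\Pi_i}(f)=\int_{\Sigma(\Pi_i)}\big(f\circ\theta\big)(x,T_x\Sigma(\Pi_i))\,dA(x)+o\big(\mathrm{area}(\Sigma(\Pi_i))\big),
$$
where the $o(\cdot)$ comes both from the uniform continuity of $f$ (comparing $\Lambda(G_i)$ to the osculating round circle) and from the comparison between the sum over cosets and the integral over the surface (each translate of $\bar\Delta$ meeting $\tilde\Sigma_i$ contributes roughly one term, and the volume of the tube of $\tilde\Sigma_i$ inside $\bar\Delta$ is comparable to the area, with correction bounded by the curvature). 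Dividing by $\mathrm{area}(\Sigma(\Pi_i))$ and applying the equidistribution hypothesis \eqref{equidis.defi} with $\psi=f\circ\theta\in C(Gr_2(M))$ gives
$$
\lim_{i\to\infty}\frac{\delta_{\Pi_i}(f)}{\mathrm{area}(\Sigma(\Pi_i))}=\frac{1}{\mathrm{vol}(Gr_2(M))}\int_{Gr_2(M)}(f\circ\theta)\,dV=\frac{1}{2\pi\mathrm{vol}(M)}\int_{Gr_2(M)}(f\circ\theta)\,dV.
$$
Finally, I would identify the right-hand side with $\frac{1}{2\pi\mathrm{vol}(M)}\nu(f)$: this is exactly the pushforward identity defining $\nu$ on $QC_0$ via $\theta$, which follows from Santaló's coarea formula \eqref{coarea} together with \eqref{santalo1} on the quotient $M$ (where $\mathrm{vol}(Gr_2(M))=2\pi\mathrm{vol}(M)$), since $\nu$ is precisely the $\mathrm{M\ddot ob}(S^2)$-invariant measure on round circles that $\theta$ pushes $dV$ forward to, fiber-integrated.

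The main obstacle I expect is making the counting-to-integration comparison precise: one must show that the number of cosets $\tilde\phi$ with $\tilde\phi(\Lambda(G_i))$ meeting a Borel test set, weighted by $f$, is asymptotically $\mathrm{area}(\Sigma(\Pi_i))^{-1}$ times the integral of $f\circ\theta$ over the leaf through that region, uniformly in $i$ — i.e. that the "thickness" of the translates stays uniformly bounded and the multiplicity $|H_i:G_i|$ and the convex-hull width do not blow up as $\eta_i\to0$. This is controlled by the uniform curvature bound $\lambda<1$ from Section~\ref{essential.section} and the Bowditch convex-hull continuity of Section~\ref{convex.hull}, but assembling these into a clean quantitative estimate is the technical heart of the argument; everything else is a change of variables plus the equidistribution hypothesis.
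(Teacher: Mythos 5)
Your outline follows the right skeleton --- test against $f\in C_c(QC_{\tilde\eta})$, convert to an integral over $\Sigma(\Pi_i)$, feed it into equidistribution, close with Santal\'o --- but the test function $f\circ\theta$ that you want to plug into \eqref{equidis.defi} does not exist as a function on $Gr_2(M)$. The map $\theta$ of \eqref{theta} is $\Gamma$-equivariant while $f$ is not $\Gamma$-invariant, so $f\circ\theta$ does not descend from $Gr_2(\H^3)$ to $Gr_2(M)$; nor is $f\circ\theta$ compactly supported upstairs (for a round $\sigma$, $\theta$ is \emph{constant} along the entire totally geodesic disk $D(\sigma)$), so \eqref{coarea} applied to it diverges. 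Most seriously, the displayed approximation $\delta_{\Pi_i}(f)=\int_{\Sigma(\Pi_i)}(f\circ\theta)\,dA+o(\mathrm{area}(\Sigma(\Pi_i)))$ is off by a whole factor of $\mathrm{area}(\Sigma(\Pi_i))$: each coset contributes mass $1$ to $\delta_{\Pi_i}(f)$ but mass of order $\mathrm{area}(\Sigma(\Pi_i))$ to the right-hand side (in the totally geodesic case the right side is $f(\sigma_i)\cdot\mathrm{area}(\Sigma(\Pi_i))$). Your closing sentence about ``$\mathrm{area}(\Sigma(\Pi_i))^{-1}$ times the integral'' gestures at the missing normalization but is inconsistent with the displayed formula, and in any case this comparison is of leading order, not a negligible error.

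The paper's Lemma \ref{local.convergence} resolves all three issues at once by localizing and normalizing the test function. It takes $\psi(x,P)=c\,f(\sigma)(r-\dist(x,y(\sigma)))_+$ with $\sigma=\theta(x,P)$, where $y(\sigma)$ is a reference point in $D(\sigma)$ chosen via a transversal geodesic and $c$ is fixed so that $\int_{D(\sigma)}\psi(x,T_xD(\sigma))\,dA=f(\sigma)$ for every round $\sigma$. This $\psi$ \emph{is} compactly supported in $Gr_2(\H^3)$, so the periodization $\tilde\psi=\sum_{\phi\in\Gamma}\psi\circ\phi$ is a genuine element of $C(Gr_2(M))$ to which \eqref{equidis.defi} applies, and \eqref{coarea} gives $\int_{Gr_2(M)}\tilde\psi\,dV=\nu(f)$ exactly. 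The counting-to-integration comparison you flagged as the technical heart then becomes an exact unfolding identity, $\nu_i(\tilde f)=\mathrm{area}(\Pi_i)^{-1}\int_{\Sigma(\Pi_i)}\tilde\psi(x,T_x\Sigma(\Pi_i))\,dA$ with $\tilde f(\sigma)=h(\sigma)\int_{D(\sigma)}\psi\,dA$, and the fact that $\tilde f-f$ vanishes on $QC_0$, combined with the local-finiteness estimate of Proposition \ref{locally.finite}, kills the error term. Without a normalization of this kind your $\psi$ cannot be defined and the scale mismatch cannot be absorbed into an $o(\mathrm{area})$ error.
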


We will use  the following proposition.
\begin{prop}\label{locally.finite}
For every compact set $K\subset QC_{\tilde\eta}$, there is a constant $C>0$ (depending on $\Gamma$ and $K$) such that for every homotopy class of quasi-Fuchsian surfaces  $\Pi$ so that  $\sigma=\Lambda(G)\in K$ for some $G\in \Pi$,  we have
$$\#\{\phi G \in \Gamma/G: \phi (\sigma)\in K\} \leq C {\rm area}(\Pi).$$
\end{prop}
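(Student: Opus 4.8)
The plan is to reduce the counting estimate to an area estimate using the Santal\'o-type formula \eqref{santalo1} together with the fact that each coset contributes a totally geodesic disk (in the hyperbolic metric, the convex core of $\Sigma(\Pi)$ is nearly totally geodesic since $\sigma\in QC_{\tilde\eta}$) whose intersection with a fixed fundamental domain has definite area. First I would fix a Dirichlet fundamental domain $\Delta$ for $\Gamma$ acting on $\H^3$, with compact closure $\bar\Delta$. For each coset $\phi G$ with $\phi(\sigma)\in K$, associate the minimal disk $D(\phi(\sigma))=\phi(D(\sigma))\subset\H^3$ with $\partial_\infty D(\phi(\sigma))=\phi(\sigma)$, whose principal curvatures are bounded by $\lambda<1$ (this is the content of the discussion at the end of Section \ref{essential.section}, valid for $\eta<\tilde\eta$). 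The key geometric input is a \emph{uniform lower bound}: there is $c_0=c_0(\Gamma,K)>0$ such that for every $\sigma'\in K$ whose associated disk passes near $\bar\Delta$ (in the sense made precise below) one has $\mathrm{area}(D(\sigma')\cap\bar\Delta')\geq c_0$, where $\Delta'$ is a fixed slightly enlarged compact neighborhood of $\bar\Delta$; this follows from compactness of $K$, the continuity of $\sigma'\mapsto D(\sigma')$ in the local Hausdorff topology (Bowditch's Theorem 1.4 cited in Section \ref{convex.hull}, plus the uniqueness and curvature bounds for the minimal disk), and the fact that a surface with bounded principal curvatures that meets $\bar\Delta$ must have definite area in a fixed enlargement of $\bar\Delta$.

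Next I would set up the double counting. Since $\Sigma(\Pi)=D(\sigma)/G$ is compact and $\Gamma$ acts properly discontinuously on $\H^3$, the disk $D(\sigma)$ meets only finitely many $\Gamma$-translates of $\bar\Delta$; more usefully, $\bigcup_{\phi}\phi(D(\sigma))$ covers each point of $\H^3$ a number of times controlled by $\mathrm{area}(\Pi)$ after projecting to $M$. Concretely, the image of $D(\sigma)$ under $\pi:\H^3\to M$ is exactly the immersed surface $\Sigma(\Pi)$, and $\int_{\bar\Delta}(\#\{x\in D(\sigma): \pi(x)\in\pi(\bar\Delta)\cap\text{translate}\})$ is bounded by $\mathrm{area}(\Sigma(\Pi))=\mathrm{area}(\Pi)$ (using that the minimal disk $D(\sigma)$ is the unique one and $\mathrm{area}(\Pi)=\mathrm{area}(\Sigma(\Pi))$ for $\Pi\in\mathcal S_{\tilde\eta}(M)$). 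Now for each coset $\phi G$ with $\phi(\sigma)\in K$, there is some $\psi\in\Gamma$ with $\psi\phi(D(\sigma))$ meeting $\bar\Delta$ (choosing $\psi$ to translate a chosen intersection point of $D(\phi(\sigma))$ with a fixed geodesic into $\bar\Delta$, as in the proof of the previous lemma); this $\psi\phi(\sigma)$ still lies in a fixed compact set $K'\supset K$ depending only on $\Gamma,K$ (since $\psi$ ranges over a finite set once $\phi(\sigma)\in K$, by proper discontinuity — more precisely, the enlargement absorbs the finitely many relevant $\psi$). By the uniform lower area bound, $\mathrm{area}(\psi\phi(D(\sigma))\cap\bar\Delta')\geq c_0$. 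Summing over the (at most finitely many, but we want the count) cosets and using that $\bigcup_{\phi G}\psi_{\phi}\phi(D(\sigma))$ projects into $M$ with total area $\mathrm{area}(\Pi)$ times a bounded multiplicity coming from the finite set of relevant $\psi$'s, we conclude the number of cosets is at most $C\,\mathrm{area}(\Pi)/c_0$, which is the claim with $C=C(\Gamma,K)$.

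The main obstacle I anticipate is making the uniform lower area bound $\mathrm{area}(D(\sigma')\cap\bar\Delta')\geq c_0$ genuinely uniform over the noncompact family of disks — the disks $D(\phi(\sigma))$ are translates and can be positioned in many ways relative to $\bar\Delta$, so one must argue that \emph{once we have normalized} (by an element $\psi\in\Gamma$) so that the disk meets $\bar\Delta$, the resulting disk lies in a compact family of disks in the local Hausdorff topology, and then invoke a compactness-plus-lower-semicontinuity argument for area together with the uniform curvature bound $\lambda<1$ (which prevents the disk from having vanishingly small area inside a fixed ball around a point it passes through). A secondary technical point is bookkeeping the multiplicity: different cosets $\phi G$ might get normalized by the same $\psi$ or produce overlapping pieces, so one should organize the sum as an integral over $M$ (or over $\bar\Delta$) of a counting function bounded pointwise by a constant times $\mathrm{area}(\Pi)/\mathrm{vol}(M)$, exactly mirroring the structure of the Santal\'o formula \eqref{santalo1} and the argument in Lemma \ref{properly.discontinuous}.
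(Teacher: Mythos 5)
Your proposal follows essentially the same strategy as the paper: reduce the count to cosets whose translated disk meets a fundamental domain, then bound that count linearly in $\mathrm{area}(\Pi)$. Where the paper handles this is worth noting, though. The paper first proves a \emph{Claim} (by the compactness argument you sketch, using Bowditch's continuity of convex hulls and the inclusion $D\subset\mathrm{conv}(\sigma)$) that there is $R=R(K)$ such that every minimal disk with asymptotic boundary in $K$ meets $B_R(0)$; it then tiles $B_R(0)$ by finitely many $\Gamma$-translates of a Dirichlet domain $\Delta$, and at that point it simply cites \cite[Proposition~6.4(ii)]{calegari-marques-neves} for the bound $\#\{\tilde\phi G:\tilde\phi(D_\sigma)\cap\Delta\neq\emptyset\}\le C\,\mathrm{area}(\Pi)$. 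Your plan unrolls that cited estimate into its ingredients — a uniform lower bound on $\mathrm{area}(D\cap\bar\Delta')$ for bounded-curvature disks passing through $\bar\Delta$, combined with a double-counting argument against $\pi^{-1}(\Sigma(\Pi))\cap\bar\Delta'$ whose total area is controlled by $\mathrm{area}(\Pi)$ times the covering multiplicity of $\bar\Delta'\to M$. That is a perfectly viable, slightly more self-contained version of the same argument, and your discussion of the required compactness and multiplicity bookkeeping is sound.

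One small correction to how you locate the disk near the origin: you propose normalizing via "a chosen intersection point of $D(\phi(\sigma))$ with a fixed geodesic," but a single fixed geodesic will not link every circle in $K$, so that geodesic may simply miss the disk. The paper's Claim circumvents this: it argues by contradiction (with sequences $\alpha_i\to\alpha$, a geodesic $\gamma$ chosen \emph{for the limit} with $\mathrm{lk}(\alpha,\partial_\infty\gamma)=1$, and Bowditch continuity) to produce a single $R$ valid for all $\alpha\in K$. You should substitute that compactness argument — or equivalently observe that since $K$ is compact and $\sigma'\mapsto\mathrm{conv}(\sigma')$ is continuous, the set $\bigcup_{\sigma'\in K}\mathrm{conv}(\sigma')$ meets a fixed ball — in place of the single-geodesic step. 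With that fixed, the outline closes.
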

\begin{proof}
 Let $K\subset QC_{\tilde\eta}$ be a compact set, and $\Pi$ be a homotopy class of quasi-Fuchsian surfaces with $\sigma=\Lambda(G)\in K$ for some $G\in \Pi$. 
 
 \medskip

{\bf Claim:} There is a constant $R>0$ depending on $K$  so that  if $\alpha\in K$ and $D\subset \H^3$ is the minimal disk  with $\partial_{\infty}D=\alpha$, then $D\cap B_R(0)\neq \emptyset$.

\medskip

Suppose the claim is false.  Then we can find a sequence $\{\alpha_i\}_{i} \subset K$ and minimal disks $D_i$ with $\partial_{\infty}D_i=\alpha_i$ such that the sequence $\{D_i\}$ escapes any compact set of $\mathbb{H}^3$.  We can suppose  that  $\{\alpha_i\}_{i\in\N}$ converges in Hausdorff distance  to an element  $\alpha\in K$. Pick $\gamma\in G(\H^3)$ so that $\text{lk}(\alpha,\partial_{\infty}\gamma)=1$.  We know that  $\text{conv}(\alpha)\cap \gamma$ is a nonempty compact set. Hence there is $R>0$ so that $\text{conv}(\alpha)\cap \gamma\subset B_{R/2}(0)$.

As discussed in Section \ref{convex.hull}, it follows from \cite{bowditch}   that $\text{conv}(\alpha_i)$ converges to $\text{conv}(\alpha)$ in $\overline \H^3$ (with respect to the Euclidean Hausdorff distance). Thus $\text{conv}(\alpha_i)\cap \gamma\subset B_{R}(0)$ for every $i$ sufficiently large. Moreover, as in Section \ref{examples.currents},  we have  that $\text{lk}(\alpha_i,\partial_{\infty}\gamma)=1$ for every $i$ sufficiently large and so $D_i$ intersects $\gamma$ 
 for any $i$ sufficiently large.  On the other hand we know  that $D_i\subset \text{conv}(\alpha_i)$ and so $D_i\cap \gamma\subset  B_R(0)$, which contradicts the fact that $D_i\cap B_{R_i}(0)=\emptyset$  for some sequence $R_i\to\infty$. This proves the claim. 

\medskip
 
In each coset $\phi G$ \mbox{of $ \Gamma/G$} pick a representative $\tilde \phi$.  In the remainder of the proof we denote also by $\Gamma/G$ the set whose elements are these representatives.  Choose $D_{\sigma}$ a $G$-invariant minimal disk in $\H^3$ so that $\partial_\infty D_{\sigma}=\sigma$ and $D_{\sigma}/G=\tilde{\Sigma}$, where $\tilde{\Sigma}\in \Pi$ is such that ${\rm area}(\tilde{\Sigma})={\rm area}(\Pi)$. By the claim, we have 
\begin{multline}\label{loc.finite.intersection}
\#\{\tilde \phi \in \Gamma/G: \tilde \phi (\sigma)\in K\}\leq \#\{\tilde \phi \in \Gamma/G:\tilde \phi(D_{\sigma})\cap B_R(0)\neq \emptyset\}.
\end{multline}

Consider a Dirichlet fundamental domain $\Delta\subset \H^3$ for $M$ containing the origin in its interior. There is a finite set $\overline\Gamma\subset \Gamma$, depending only on $R$ and $\Gamma$, so that $B_{R}(0)\subset \cup_{\phi\in \overline\Gamma}\phi(\Delta)$. Hence if
$$\tilde \Gamma=\{\tilde\phi \in \Gamma/G: \tilde\phi(D_{\sigma})\cap\Delta\neq \emptyset\},$$
we have from \eqref{loc.finite.intersection} that
$$
\#\{\tilde \phi \in \Gamma/G: \tilde \phi (\sigma)\in K\}
\leq \#\{\tilde\phi \in \Gamma/G:\tilde\phi(D_{\sigma})\cap B_R(0)\neq \emptyset\} \leq (\#\overline{\Gamma})\cdot (\# \tilde \Gamma).
$$
 In \cite[Proposition 6.4 (ii)]{calegari-marques-neves}  it is shown the existence of $C>0$ depending on $\Gamma$ so that
$$\# \tilde \Gamma\leq  C\, \text{area}(\tilde{\Sigma}).$$
Hence $\#\{\tilde \phi \in \Gamma/G: \tilde \phi (\sigma)\in K\}\leq  C (\#\overline{\Gamma})\, \text{area}(\Pi)$, which proves the proposition.
\end{proof}

\begin{proof}[Proof of Proposition \ref{convergence}] 

For  $i\geq \tilde{i}$  the support of $\delta_{\Pi_i}$ is  in $QC_{\tilde{\eta}}$.  We define $\nu_i={\text{area}({\Sigma}(\Pi_i))}^{-1}{\delta_{\Pi_i}}.$

Let $f\in  C_c(QC_{\tilde{\eta}})$. If  $f=0$ on $QC_0$, then $\nu (f)=0$.  Let $U\subset QC_{\tilde{\eta}}$ be an open set with compact closure such that ${\rm supp}(f)\subset U$.  For any $\delta>0$  there is $0<\eta<\tilde{\eta}$ so that $\sup_{\sigma\in QC_{\eta}\cap \overline{U}}|f|(\sigma)<\delta$. Notice that $QC_{\eta}$ contains the support of $\nu_i$ for any $i$ sufficiently large.   From Proposition \ref{locally.finite}, there exists $C_1>0$ so that $|\nu_i(f)|\leq C_1\delta$ for any $i$ sufficiently large.  Hence  $\nu_i(f)\to 0$. This proves the proposition when $f=0$ on $QC_0$.

Let  $\alpha\in QC_0$,   $y\in D({\alpha})$, and  $\gamma\in G(\H^3)$ be the geodesic passing by $y$ orthogonally to $D({\alpha})$. There is an open neighborhood $U
_{\alpha}\subset QC_{\tilde{\eta}}$ of $\alpha$, with compact closure,  so that for every $\sigma\in U_{\alpha}$ the disk $D({\sigma})$ intersects $\gamma$ at a unique  point $y(\sigma)$ and  $\text{lk}(\sigma,\partial_{\infty}\gamma)=1$. The map $\sigma\in U_{\alpha}\to y(\sigma)\in \H^3$ is continuous.   We  assume its image is in a ball centered at $y$ with radius less than half the injectivity  radius of $M$. 

\begin{lemm}\label{local.convergence} For every $f\in C_c(U_{\alpha})$, we have $\nu_i(f)\to\frac{1}{2\pi{\rm vol}(M)} \nu(f)$. 
\end{lemm}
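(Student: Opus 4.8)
The strategy is to transfer the counting measure $\delta_{\Pi_i}$ on $QC_{\tilde\eta}$ into a measure on the minimal surface $\Sigma(\Pi_i)$ via the intersection pattern with the fixed geodesic $\gamma$, and then invoke the equidistribution hypothesis together with Santal\'o's formula \eqref{coarea}. First I would fix $\alpha\in QC_0$, $y\in D(\alpha)$, and the orthogonal geodesic $\gamma$, and work inside the neighborhood $U_\alpha$ so that every $\sigma\in U_\alpha$ produces a unique point $y(\sigma)=D(\sigma)\cap\gamma$, with $y(\sigma)$ lying in a ball of radius less than half the injectivity radius of $M$. For $f\in C_c(U_\alpha)$, the key observation is that
$$
\delta_{\Pi_i}(f)=\sum_{\tilde\phi} f(\tilde\phi(\Lambda(G)))
$$
counts, with the factor $|H:G|$, exactly the points where the lift $D_\sigma$ of the minimal surface $\Sigma(\Pi_i)$ (in the hyperbolic metric) and its $\Gamma$-translates meet $\gamma$ — weighted by $f$ evaluated at the corresponding quasi-circle. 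Pushing down to $M$, this becomes an integral over $\Sigma(\Pi_i)\cap(\text{tube around }y)$.

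Next I would make this precise: after choosing the $G$-invariant minimal disk $D_\sigma$ with $D_\sigma/G=\Sigma(\Pi_i)$, the sum $\sum_{\tilde\phi}f(\tilde\phi(\sigma))$ equals a sum over the intersection points of $\bigcup_{\tilde\phi}\tilde\phi(D_\sigma)$ with $\gamma$ of a function that depends continuously on the point of intersection and on the tangent plane there (since $\sigma$, hence $f(\sigma)$, is a continuous function of the pair $(y(\sigma),T_{y(\sigma)}D(\sigma))$ through $\theta$ from \eqref{theta}). Projecting to $M$, and using that the chosen ball has radius below the injectivity radius so the projection is injective on the relevant piece, this is rewritten as
$$
\delta_{\Pi_i}(f)=\int_{\Sigma(\Pi_i)} \Phi_f\bigl(x,T_x\Sigma(\Pi_i)\bigr)\,\bigl|\langle N_{\Sigma(\Pi_i)},\dot\gamma\rangle\bigr|\,dA(x)+(\text{error})
$$
for an explicit continuous $\Phi_f$ on $Gr_2(M)$ supported near $(\pi(y),\pi(T_yD(\alpha)))$, where the Jacobian factor $|\langle N,\dot\gamma\rangle|$ comes from the transverse intersection with $\gamma$ (this is the same coarea/Santal\'o Jacobian appearing in \eqref{santalo.formula0}). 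Dividing by $\text{area}(\Sigma(\Pi_i))$ and applying the equidistribution hypothesis \eqref{equidis.defi} to the continuous integrand $\Phi_f\cdot|\langle N,\dot\gamma\rangle|$ (note this only depends on the point and tangent plane, hence descends to $Gr_2(M)$), gives
$$
\nu_i(f)\longrightarrow \frac{1}{{\rm vol}(Gr_2(M))}\int_{Gr_2(M)}\Phi_f(x,P)\,\bigl|\langle N(x,P),\dot\gamma\rangle\bigr|\,dV(x,P).
$$
Finally I would identify the right-hand side with $\frac{1}{2\pi\,{\rm vol}(M)}\nu(f)$: since ${\rm vol}(Gr_2(M))=2\pi{\rm vol}(M)$, this amounts to showing $\int_{Gr_2(M)}\Phi_f|\langle N,\dot\gamma\rangle|\,dV=\nu(f)$, which is precisely the content of the coarea formula \eqref{coarea} combined with the change of variables sending $(x,P)\in Gr_2(\H^3)$ with $D(x,P)$ meeting $\gamma$ to the point $D(x,P)\cap\gamma$ — the Jacobian $|\langle N,\dot\gamma\rangle|$ being exactly what makes \eqref{coarea} and the $\nu$-integral over $U_\alpha$ match; unwinding the definition of $\Phi_f$ collapses the extra integration along $\gamma$ and returns $\int_{QC_0}f\,d\nu$.

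The main obstacle I expect is the bookkeeping in the transverse-intersection step: one must verify that for all large $i$ every quasi-circle $\tilde\phi(\sigma)$ contributing to $\delta_{\Pi_i}(f)$ actually satisfies $\mathrm{lk}(\tilde\phi(\sigma),\partial_\infty\gamma)=1$ and meets $\gamma$ in a single point inside the sub-injectivity-radius ball (so that $f$ sees it through $y(\cdot)$ exactly once and the descent to $\Sigma(\Pi_i)\subset M$ is injective), and that the $(1+\tilde\eta)$-quasi-circle bound keeps $D(\sigma)$ close enough to the totally geodesic disk $D(\alpha)$ for the neighborhood $U_\alpha$ to be used uniformly in $i$; the error term from boundary effects (points of $\Sigma(\Pi_i)$ near $\partial$ of the ball) must be controlled, which follows from Proposition \ref{locally.finite} giving the a priori bound $\#\{\tilde\phi:\tilde\phi(\sigma)\in K\}\le C\,\mathrm{area}(\Pi_i)$ on a compact $K\supset\mathrm{supp}(f)$, so after dividing by $\mathrm{area}(\Sigma(\Pi_i))$ the contribution of any fixed small boundary collar is $O(\delta)$.
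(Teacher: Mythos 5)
Your high-level plan — push the counting measure $\delta_{\Pi_i}$ down to an integral over the minimal surface $\Sigma(\Pi_i)$, apply the equidistribution hypothesis \eqref{equidis.defi}, and close with the Santal\'o/coarea identity \eqref{coarea} — is the same strategy as the paper's. However, the central step as you have written it has a genuine gap, and the Jacobian factor you introduce is a misdirection rather than a tool.

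The quantity $\delta_{\Pi_i}(f)=\sum_{\tilde\phi}f(\tilde\phi(\sigma))$ is a \emph{finite sum of scalars}: it assigns a number to each of finitely many cosets $\tilde\phi$ with $\tilde\phi(\sigma)\in\mathrm{supp}(f)$, equivalently to finitely many intersection points of the translated disks with $\gamma$. Your proposed identity
$$
\delta_{\Pi_i}(f)=\int_{\Sigma(\Pi_i)} \Phi_f(x,T_x\Sigma(\Pi_i))\,\bigl|\langle N_{\Sigma(\Pi_i)},\dot\gamma\rangle\bigr|\,dA(x)+(\text{error})
$$
equates that discrete sum with a \emph{surface integral against area measure} of a continuous function. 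These do not become equal by "projecting to $M$"; to get a surface integral you must \emph{spread each Dirac contribution} $f(\tilde\phi(\sigma))$ over a small piece of $\tilde\phi(D_\sigma)$ using a normalized bump, and then verify that the integral over that piece actually recovers the value $f(\tilde\phi(\sigma))$ (exactly for round $\sigma$, and up to a controllable discrepancy otherwise). You never construct such a $\Phi_f$ or verify the normalization, and without it the "error" term is unbounded. Furthermore the factor $|\langle N,\dot\gamma\rangle|$ does not belong here: Santal\'o's formula \eqref{santalo.formula0} produces that Jacobian when one integrates over a \emph{family} of geodesics; for a fixed single geodesic $\gamma$ the intersection with $\Sigma(\Pi_i)$ is a discrete set, not a measure with density, and no Jacobian appears.

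The paper resolves precisely these points. It defines the explicit bump
$\psi(x,P)=c\,f(\theta(x,P))\,(r-\mathrm{dist}(x,y(\theta(x,P))))_+$
with $c$ normalized so that $\int_{D(\sigma)}\psi(x,T_xD(\sigma))\,dA=f(\sigma)$ for round $\sigma$; this simultaneously gives $\int_{Gr_2(\H^3)}\psi\,dV=\nu(f)$ via \eqref{coarea} (no change of variables or Jacobian needed — $\gamma$ enters only to supply a continuously varying center $y(\sigma)$). It then introduces the auxiliary cutoff $h$ and the function $\tilde f(\sigma)=h(\sigma)\int_{D(\sigma)}\psi\,dA$, shows $\tilde f-f$ vanishes on $QC_0$ so $\nu_i(\tilde f-f)\to 0$ by the first step of Proposition \ref{convergence}, and proves \emph{algebraically} (by rearranging the double sum over $\Gamma=\{\tilde\phi\}\cdot G_i$) that $\nu_i(\tilde f)=\mathrm{area}(\Pi_i)^{-1}\int_{\Sigma(\Pi_i)}\tilde\psi\,dA$, at which point the equidistribution hypothesis applies. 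Your "main obstacle" paragraph addresses peripheral bookkeeping (linking numbers, boundary collars) but not this central discrete-to-continuous conversion, which is where the real content of the proof lies.
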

\begin{proof}

Let  $f\in C_c(U_{\alpha})$.
Recall the map $\theta:Gr_2(\H^3)\rightarrow QC_0$ defined in \eqref{theta}. With $r>0$ and $c$ constants to be chosen later, consider the following function defined on $\theta^{-1}(U_{\alpha}\cap QC_0)\subset Gr_2(\H^3):$
\begin{equation}\label{cutt-off.defi}
\psi(x,P)=c f(\sigma)(r-\dist(x,y(\sigma)))_+,\quad \sigma=\theta(x,P).
\end{equation}
Since $f$ has compact support in $U_{\alpha}$, the function $\psi$ has compact support in $\theta^{-1}(U_{\alpha}\cap QC_0)$. Thus we can extend $\psi$ so that $\psi=0$ in the complement of the set $\theta^{-1}(U_{\alpha}\cap QC_0)$. Hence  $\psi\in C_c(Gr_2(\H^3))$. 

We choose $r$ sufficiently small so that if $\psi(x,P)\neq 0$, then $x$ is in a ball centered at $y$ with radius less than the injectivity radius of $M$. Hence  there is $\tilde{\psi}\in C(Gr_2(M))$ so that if $\pi:Gr_2(\H^3)\rightarrow Gr_2(M)$ denotes the quotient map then
\begin{equation}\label{psi.quotient}
\tilde{\psi}\circ\pi=\sum_{\phi\in \Gamma}\psi\circ\phi.
\end{equation} 
Then choose $c$ so  that $2\pi c\int_0^r(r-s)\sinh sds =1$. We have  from \eqref{cutt-off.defi} that for any $\sigma\in QC_0\cap U_{\alpha}$,
\begin{equation}\label{equidis.formula}
\int_{D(\sigma)}\psi(x,T_xD(\sigma))dA=cf(\sigma)\int_{D(\sigma)}(r-\dist(x,y(\sigma))_+dA=f(\sigma).
\end{equation}
Hence  from  \eqref{coarea} it follows  that
\begin{eqnarray*}
\int_{Gr_2(\H^3)}\psi dV&=&c\int_{QC_0} f(\sigma) \int_{D(\sigma)} (r-\dist(x,y(\sigma)))_+\, dA \, d\nu(\sigma)\\
&=& \int_{QC_0} f(\sigma) \, d\nu(\sigma)=\nu(f).
\end{eqnarray*}
This implies 
$$\nu (f)=\int_{Gr_2(M)}\tilde\psi \,dV.$$

Let $V$ be an open set  such that $\text{supp}(f)\subset V\subset \bar V\subset U_{\alpha}$.  Then there is  $0<\eta_1<\tilde{\eta}$ so that if $\sigma\in QC_{\eta_1} -V$ then $\psi(x,T_xD(\sigma))=0$ for any $x\in D(\sigma)$.

Let  $h\in C_c(U_{\alpha})$ be such  that $0\leq h\leq 1$ and $h=1$ on the compact set $ \bar V$. Consider the function  $\tilde{f} \in C_c(U_{\alpha})$ defined by
$$ \tilde f(\sigma)=h(\sigma)\int_{D(\sigma)}\psi(x,T_xD(\sigma))\, dA.
$$
If $\sigma\in QC_0\cap U_{\alpha}$ then from \eqref{equidis.formula} we have
$\tilde f(\sigma)=h(\sigma)f(\sigma).$ Hence if $\sigma \in QC_0\cap V$, we have
$\tilde f(\sigma)=f(\sigma).$ If $\sigma \in QC_0\cap (U_{\alpha}- V)$, then since $\psi(x,T_xD(\sigma))=0$ for any  $x\in D(\sigma)$ it 
follows that $\tilde{f}(\sigma)=f(\sigma)=0$. 
Thus $\tilde f-f=0$ on $QC_0$ and so  $\lim_i\nu_i(\tilde f-f)=0$.

The support of $\nu_i$ is contained in $QC_{\eta_1}$ for sufficiently large $i$. We choose $G_i\in\Pi_i$, hence $\sigma_i=\Lambda(G_i)\in QC_{\eta_1}$. Let $\{\tilde{\phi}\}$ be a set of representatives of $\Gamma/G_i$. Using  that if $\sigma\in QC_{\eta_1}\cap V$ then $h(\sigma)=1$, and that if $\sigma\in QC_{\eta_1}-V$ then $\tilde f(\sigma)=0$, we deduce
\begin{align*}
  \nu_i(\tilde f) 
  &=\frac{1}{\text{area}(\Pi_i)}\sum_{\tilde\phi}\tilde f(\tilde\phi(\sigma_i))
  =\frac{1}{\text{area}(\Pi_i)}\sum_{\tilde\phi}\int_{\tilde\phi(D(\sigma_i))}\psi(x,T_x\tilde\phi(D(\sigma_i)))dA\\
 & = \frac{1}{\text{area}(\Pi_i)}\sum_{\tilde\phi}\int_{D(\sigma_i)}\psi\circ\tilde{\phi} (x,T_xD(\sigma_i))dA.
 \end{align*}
 
 
 Let $\Delta_i\subset D(\sigma_i)$ be a Dirichlet fundamental domain for the covering $D(\sigma_i)\rightarrow D(\sigma_i)/G_i=\Sigma(\Pi_i)$.  Then
\begin{eqnarray*}
 \int_{\Sigma(\Pi_i)}\tilde\psi (x,T_x\Sigma(\Pi_i))\, dA &=&\int_{\Delta_i} (\sum_{\phi\in \Gamma}\psi\circ\phi)\, dA
 =  \int_{\Delta_i}(\sum_{\tilde\phi}\sum_{g\in G_i}\psi\circ\tilde{\phi} g)\, dA\\
  &=& \sum_{\tilde\phi}\int_{\Delta_i}(\sum_{g\in G_i}\psi\circ\tilde{\phi} g)\, dA
   = \sum_{\tilde\phi}\int_{D(\sigma_i)}\psi\circ\tilde{\phi} \, dA.
\end{eqnarray*}
Therefore we have proved
$$
 \nu_i(\tilde f) = \frac{1}{{\rm area}(\Pi_i)}\int_{\Sigma(\Pi_i)}\tilde\psi (x,T_x\Sigma(\Pi_i))\, dA.
$$

Thus we have from \eqref{equidis.defi} that 
$$
\lim_{i\rightarrow \infty} \nu_i(\tilde f) =\frac{1}{{\rm vol}(Gr_2(M))}\int_{{Gr_2}(M)}\tilde\psi \,d{V}.
$$
Hence
$$
\lim_{i\rightarrow \infty} \nu_i( f)=\frac{1}{2\pi\,{\rm vol}(M)}\nu(f).
$$
\end{proof}

Let $f\in  C_c(QC_{\tilde{\eta}})$.  Consider $K=\text{supp}(f)\cap QC_0$.  By compactness  of $K$, we can find  open neighborhoods $\{U_{\alpha_j}\}_{j=1}^q$, $\{V_{\alpha_j}\}_{j=1}^q$, with $V_{\alpha_j}\subset \overline{V}_{\alpha_j}\subset U_{\alpha_j}$,   $K\subset V=\cup_{j=1}^qV_{\alpha_j}$ and such that Lemma \ref{local.convergence} holds for each $U_{\alpha_j}$.  Notice that $\overline{V}\subset U= \cup_{j=1}^qU_{\alpha_j}$. There exist nonnegative functions $\{\xi_1,\dots,\xi_q\}\subset C_c(QC_{\tilde{\eta}})$ and $\xi\in C(QC_{\tilde{\eta}})$ such that $\xi+\sum_j\xi_j=1$, 
$\overline{V}_{\alpha_j}\subset {\rm supp}(\xi_j)\subset U_{\alpha_j}$, and $(QC_{\tilde{\eta}}-V) \subset {\rm supp}(\xi) \subset (QC_{\tilde{\eta}}-K)$.

Then $f=\xi f+ \sum_{j=1}^q \xi_jf$. Since $\xi f=0$ on $QC_0$, it follows  that $\nu_i(\xi f)\rightarrow \frac{1}{2\pi\,{\rm vol}(M)}\nu(\xi f)$. By Lemma \ref{local.convergence}, 
$\nu_i(\xi_j f) \rightarrow  \frac{1}{2\pi\,{\rm vol}(M)}\nu(\xi_j f)$ for every $1\leq j\leq q$ because $\xi_j f\in C_c(U_{\alpha_j})$. Since 
$\nu_i(f)=\nu_i(\xi f)+\sum_{j=1}^q \nu_i(\xi_j f)$, the proposition follows.

\end{proof}

A sequence of closed geodesics  $\{\gamma_i\}_{i\in\N}$ {\em becomes equidistributed with respect to the metric $g$} if for any $\psi\in C(T^1_g M)$,
\begin{equation}\label{equidis}
\lim_{i\to\infty}\frac{1}{l_g(\gamma_i)}\int_{0}^{l_g(\gamma_i)}\psi(\gamma_i(t),\gamma'_i(t))\,dt=\frac{1}{{\rm vol}(T^1_gM)}\int_{T^1_gM}\psi \,dV_g.
\end{equation}

The proof of Proposition \ref{convergence} can be adapted to the case of geodesics in a metric of negative sectional curvature: 
\begin{prop}\label{convergence.geodesic}
Suppose $\{\gamma_i\}_{i\in\N}$ is a sequence of closed geodesics that is equidistributed with respect to the metric $g$.  If $\Lambda_i$ denotes the homotopy class of $\gamma_i$, then
 $$\frac{1}{l_g(\gamma_i)}\delta_{\Lambda_i}\rightarrow \frac{1}{ 2\pi{\rm vol}_g(M)}\lambda_g$$
 as $i\to\infty$.
 \end{prop}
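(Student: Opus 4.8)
The plan is to mirror the proof of Proposition \ref{convergence}, replacing surfaces by geodesics, totally geodesic disks $D(\sigma)$ by hyperbolic geodesics $\gamma_g(u)$, Santal\'o's formula \eqref{coarea} by \eqref{measure.on.geodesics}, and the area-equidistribution hypothesis \eqref{equidis.defi} by the geodesic-equidistribution hypothesis \eqref{equidis}. Concretely, set $\mu_i = l_g(\gamma_i)^{-1}\delta_{\Lambda_i}$, a sequence of $\Gamma$-invariant Radon measures on $\mathcal G$; we must show $\mu_i(F)\to \tfrac{1}{2\pi\,\mathrm{vol}_g(M)}\lambda_g(F)$ for every $F\in C_c(\mathcal G)$. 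Note that $\mathcal G$ is already locally compact and $\sigma$-compact, so there is no issue analogous to restricting to a $QC_{\tilde\eta}$, and the whole argument takes place on $\mathcal G$ with respect to the metric $g$.

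First I would establish the local-finiteness estimate that plays the role of Proposition \ref{locally.finite}: for every compact $K\subset\mathcal G$ there is $C>0$ (depending on $\Gamma$, $g$, and $K$) so that for every nontrivial homotopy class $\Lambda$ with lifted axis $\tilde\alpha$ having endpoints $u_0\in K$ and stabilizer $H\cong\mathbb Z$, one has $\#\{\phi H\in\Gamma/H:\phi(u_0)\in K\}\le C\, l_g(\Lambda)$. The proof is the geodesic analogue of Proposition \ref{locally.finite}: a compactness argument shows there is $R>0$ so that every $g$-geodesic $\gamma_g(u)$ with $u\in K$ meets $B_R(0)$ (using that $\overline{\gamma_g(u)}$ varies continuously and that its endpoints are confined to $K$), whence $\#\{\phi H:\phi(u_0)\in K\}\le\#\{\phi H:\phi(\tilde\alpha_g)\cap B_R(0)\ne\emptyset\}$ where $\tilde\alpha_g$ is the $g$-axis; covering $B_R(0)$ by finitely many $\Gamma$-translates of a Dirichlet domain and counting how many $\Gamma/H$-translates of a fundamental segment of $\tilde\alpha_g$ meet $\Delta$ gives the bound $C\,\mathrm{len}_g(\alpha_g)=C\,l_g(\Lambda)$ by the standard orbit-counting lemma (as in \cite[Proposition 6.4]{calegari-marques-neves}).

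Next comes the local convergence statement corresponding to Lemma \ref{local.convergence}. Fix $u_0\in\mathcal G$, let $\gamma=\gamma_g(u_0)$, pick a point $y\in\gamma$ and a small radius $r$, and build $\psi\in C_c(Gr_1(\H^3))$ supported near $(y,T_y\gamma)$ of the form $\psi(x,v)=c\,F(u)(r-\mathrm{dist}_g(x,y(u)))_+$ where $u\mapsto y(u)$ is the continuous nearest-point map on a small neighborhood $U_{u_0}$ of $u_0$, with $c$ normalized so that $\int_{\gamma_g(u)}(r-\mathrm{dist}_g(x,y(u)))_+\,ds_g=c^{-1}$; then \eqref{measure.on.geodesics} gives $\int_{Gr_1(\H^3)}\psi\,dV_g=\lambda_g(F)$, hence $\lambda_g(F)=\int_{Gr_1(M)}\tilde\psi\,dV_g$ for the projected $\tilde\psi\in C(T^1_gM)=C(Gr_1(M))$. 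Unwinding the $\Gamma/H_i$-sum defining $\delta_{\Lambda_i}$ against a fundamental segment of the $g$-axis exactly as in the displayed computation in the proof of Lemma \ref{local.convergence}, one gets $\mu_i(\tilde F)=l_g(\gamma_i)^{-1}\int_0^{l_g(\gamma_i)}\tilde\psi(\gamma_i(t),\gamma_i'(t))\,dt$ for a modified test function $\tilde F$ agreeing with $F$ off a set disjoint from the relevant support, and the equidistribution hypothesis \eqref{equidis} together with $\mathrm{vol}(T^1_gM)=4\pi\,\mathrm{vol}_g(M)$ yields $\mu_i(F)\to\tfrac{1}{2\pi\,\mathrm{vol}_g(M)}\lambda_g(F)$ for $F\in C_c(U_{u_0})$. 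Finally a partition-of-unity argument over a finite cover of $\mathrm{supp}(F)$ — using the local-finiteness bound from the first step to control the pieces not captured by the $U_{u_0}$'s — passes from local to global, exactly as in the last paragraph of the proof of Proposition \ref{convergence}.

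The main obstacle I anticipate is the bookkeeping in the local convergence step: one is working with the $g$-Liouville current on $\mathcal G$ rather than the round-circle measure $\nu$, so \eqref{measure.on.geodesics} is the correct substitute for \eqref{coarea}, and one must be careful that the nearest-point projection $y(u)$, the bump function $(r-\mathrm{dist}_g(\cdot,y(u)))_+$, and the normalization constant $c$ are all defined using the metric $g$ (so $c$ depends on the local geometry of $g$ near $y$, but the normalization $\int_{\gamma_g(u)}(r-\mathrm{dist}_g)_+\,ds_g$ is still a constant independent of $u$ by the uniqueness and continuity of $g$-geodesics). Since the $g$-geodesics have the same ideal endpoints as the hyperbolic ones and $G(\H^3,g)\to\mathcal G$ is a homeomorphism with the continuity properties recalled in Section \ref{integral.geometry}, no new compactness input beyond \cite{bowditch} and the convergence $\overline{\gamma_g(u_i)}\to\overline{\gamma_g(u)}$ is needed. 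Everything else is a routine transcription of the argument already given for Proposition \ref{convergence}.
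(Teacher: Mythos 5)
Your proposal follows essentially the same route as the paper's own (commented-out) argument: establish a local-finiteness bound à la Proposition \ref{locally.finite} using that $g$-geodesics with ideal endpoints in a compact $K\subset\mathcal G$ meet a fixed ball $B_R(0)$, build the test function $\psi(x,v)=c\,f(\sigma)(r-\mathrm{dist}_g(x,y(\sigma)))_+$ on $Gr_1(\H^3)$ with a continuous selection $\sigma\mapsto y(\sigma)\in\gamma_g(\sigma)$, apply \eqref{measure.on.geodesics} in place of \eqref{coarea}, unwind the $\Gamma/H_i$-sum over a fundamental segment, and finish with a partition of unity. The only cosmetic deviations are that the paper verifies the $B_R(0)$-step via Morse's lemma rather than a direct compactness argument, defines $y(\sigma)$ via a transversal embedded disk rather than a nearest-point projection, and does not actually need the intermediate modified function $\tilde F$ (there is no $QC_0$-versus-$QC_\eta$ support mismatch in the geodesic setting, so the identity $\int_{\gamma_g(\sigma)}\psi\,ds_g=f(\sigma)$ holds for \emph{every} $\sigma\in U_\alpha$); none of these affects correctness.
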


Recall the definition of $X\subset QC_{\eta} \times \mathcal{G}$ in \eqref{parameter.space}, and that  $P:X\rightarrow X/\Gamma$ denotes  the quotient map.   

\begin{prop}\label{continuity.form}
If $\lambda\in  \mathcal C(\Gamma)$, the map $I$ is continuous  at  $(\nu,\lambda)$.
\end{prop}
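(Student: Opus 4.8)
The plan is to deduce continuity at $(\nu,\lambda)$ from separate lower and upper semicontinuity estimates for the map $(\mu,\lambda')\mapsto I(\mu,\lambda')=[\mu\times\lambda'](X/\Gamma)$. Throughout I will use the elementary fact that if $\mu_i\to\nu$ in $\mathcal{C}_2(\Gamma)$ and $\lambda_i\to\lambda$ in $\mathcal{C}(\Gamma)$ then $\mu_i\times\lambda_i\to\nu\times\lambda$ in the vague topology on $QC_\eta\times\mathcal{G}$. Indeed, vaguely convergent sequences of Radon measures are locally uniformly bounded, so for $f\in C_c(QC_\eta\times\mathcal{G})$ one approximates $f$ uniformly on a compact product neighbourhood $K_1\times K_2$ of its support by finite sums $\sum_k g_k\otimes h_k$ with $g_k\in C_c(QC_\eta)$, $h_k\in C_c(\mathcal{G})$ (Stone--Weierstrass), controls the error using $\sup_i\mu_i(K_1)<\infty$ and $\sup_i\lambda_i(K_2)<\infty$, and observes that $(\mu_i\times\lambda_i)(g_k\otimes h_k)=\mu_i(g_k)\lambda_i(h_k)\to\nu(g_k)\lambda(h_k)$.

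\emph{Lower semicontinuity.} From the construction of $[m]$ and inner regularity of Radon measures one has, for every $\Gamma$-invariant Radon measure $m$ on $QC_\eta\times\mathcal{G}$, the identity $[m](X/\Gamma)=\sup\{m(f): f\in C_c(X),\ f\geq 0,\ f^\Gamma\leq 1\}$. For a fixed such $f$, regarded as an element of $C_c(QC_\eta\times\mathcal{G})$, the previous paragraph gives $(\mu_i\times\lambda_i)(f)\to(\nu\times\lambda)(f)$, whence $\liminf_i I(\mu_i,\lambda_i)\geq(\nu\times\lambda)(f)$; taking the supremum over $f$ yields $\liminf_i I(\mu_i,\lambda_i)\geq I(\nu,\lambda)$. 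This step holds at every point of $\mathcal{C}_2(\Gamma)\times\mathcal{C}(\Gamma)$ and does not use that $\mu=\nu$.

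\emph{Upper semicontinuity.} Here I would use the \emph{compact} set $Y\subset QC_\eta\times\mathcal{G}$ from \eqref{omega.definition}. It has already been shown that $[m](X/\Gamma)\leq m(Y)$ for every $\Gamma$-invariant $m$, and that $P(Y\cap X)=X/\Gamma$. Since $Y$ is compact and $\mu_i\times\lambda_i\to\nu\times\lambda$ vaguely, upper semicontinuity of mass on compacta gives $\limsup_i I(\mu_i,\lambda_i)\leq\limsup_i(\mu_i\times\lambda_i)(Y)\leq(\nu\times\lambda)(Y)$. The whole argument therefore reduces to the identity $(\nu\times\lambda)(Y)=[\nu\times\lambda](X/\Gamma)=I(\nu,\lambda)$, i.e. to showing that $Y\cap X$ is a \emph{measure-theoretic fundamental domain} for the $\Gamma$-action on $(X,\nu\times\lambda)$: that $(\nu\times\lambda)(Y\setminus X)=0$ and that the translates $\{\phi(Y\cap X)\}_{\phi\in\Gamma}$ have pairwise $(\nu\times\lambda)$-null intersections. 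A change of variables then gives $[\nu\times\lambda]=P_*\big((\nu\times\lambda)|_{Y\cap X}\big)$, hence $(\nu\times\lambda)(Y)=(\nu\times\lambda)(Y\cap X)=I(\nu,\lambda)$, and combined with lower semicontinuity this proves continuity at $(\nu,\lambda)$.

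This last identity is where the hypothesis that the conformal current is $\nu$ is essential, and it is the main obstacle. The key observation is that, by Fubini together with the fact that the round circles through a fixed point of $S^2$ form a positive-codimension subfamily of $QC_0$ on which the $\text{M\"{o}b}(S^2)$-invariant measure $\nu$ is absolutely continuous, the set $\{(\sigma,\gamma)\in QC_0\times\mathcal{G}:\partial_{\infty}\gamma\cap\sigma\neq\emptyset\}$ is $(\nu\times\lambda)$-null. From this: (a) for $(\nu\times\lambda)$-a.e. $(\sigma,\gamma)$ the geodesic $\gamma$ does not lie in the totally geodesic plane of $D(\sigma)$ (that would force $\partial_{\infty}\gamma\subset\sigma$), so $\text{conv}(\sigma)\cap\gamma=D(\sigma)\cap\gamma$ is a single point of $\mathbb{H}^3$; (b) for round $\sigma$, a geodesic meeting $D(\sigma)$ with $\partial_{\infty}\gamma\cap\sigma=\emptyset$ satisfies $\text{lk}(\sigma,\partial_{\infty}\gamma)=1$, which gives $(\nu\times\lambda)(Y\setminus X)=0$; (c) two distinct $\Gamma$-translates of $Y\cap X$ can overlap only where this single point lies in $\Gamma\,\partial\Delta$, and choosing the centre of the Dirichlet domain $\Delta$ generically (harmless, since $I(\nu,\lambda)$ is independent of $\Delta$) so that $\lambda$-a.e. geodesic is not contained in a bisector plane of $\Gamma\,\partial\Delta$, this locus is $(\nu\times\lambda)$-null because the geodesic planes through a fixed point of $\mathbb{H}^3$ are likewise $\nu$-negligible. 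Thus the required identity holds. The essential difficulty — and the reason continuity of $I$ is special to the Liouville-type current $\nu$ and may fail at a general conformal current — is precisely that both the potential escape of mass toward $\partial X$ (where the quasi-circle touches an endpoint of the geodesic) and the over-counting inherent in $Y$ must be absorbed into $\nu$-null sets, which works only because of the smoothness of $\nu$ on the space of geodesic planes.
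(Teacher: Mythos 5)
Your lower semicontinuity step is exactly the paper's, and your general strategy for upper semicontinuity (exploit the compactness of $Y$ together with a $(\nu\times\lambda)$-null ``bad set'' at the frontier of $X$) is in the same spirit. But there is a genuine gap in the way you close the argument.

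You reduce everything to the identity $(\nu\times\lambda)(Y)=I(\nu,\lambda)$, which you propose to obtain by showing that $Y\cap X$ is a measure-theoretic fundamental domain for $\Gamma\curvearrowright(X,\nu\times\lambda)$. Your step (a) is fine and is the same null set the paper uses (the set $H'$ where $\partial_\infty\gamma$ meets $\sigma$, null by \eqref{santalo2} and $\text{M\"ob}(S^2)$-invariance of $\nu$). The problem is step (c). You need that for $(\nu\times\lambda)$-a.e. $(\sigma,u)\in Y\cap X$ the single intersection point $D(\sigma)\cap\gamma_g(u)$ avoids $\Gamma\partial\Delta$. For a fixed geodesic $\gamma$ meeting the faces of $\Delta$ transversally, this follows because the circles through a fixed point of $\H^3$ form a $\nu$-null set; but if $\gamma$ lies inside a face plane, then $\gamma\cap\partial\Delta$ is a segment of positive length and the set $\{\sigma: D(\sigma)\cap\gamma\cap\partial\Delta\neq\emptyset\}$ has positive $\nu$-mass. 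You propose to rule this out by choosing the center of the Dirichlet domain generically, but $\lambda$ is an arbitrary geodesic current (possibly atomic and supported on a dense set of geodesics), and you would need to show that a single choice of center simultaneously avoids, for every nontrivial $\psi\in\Gamma$, a positive-$\lambda$-mass set of geodesics contained in the $\psi$-bisector. This is a nontrivial Fubini/measurability argument over $\H^3\times\mathcal G$ which you only gesture at; as written the step does not close. Also note that $I(\nu,\lambda)$ is intrinsically defined and so independent of $\Delta$, but the inequality $I(\nu,\lambda)\leq(\nu\times\lambda)(Y)$ together with the desired equality both depend on $\Delta$, so the genericity would have to be established for the particular $\lambda$ in hand.

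The paper avoids this entirely: it never asserts that $Y\cap X$ is a fundamental domain. After establishing $\xi(H')=0$ (your (a)), it chooses a small open neighborhood $O\supset H'\cap Y$ with $\xi(\overline O)\leq\delta$ and observes that $K=P(Y\cap X\setminus O)$ is compact in $X/\Gamma$ (using that the closure of $X$ in $QC_\eta\times\mathcal G$ lies in $X\cup H'$). Then it estimates directly downstairs: $\limsup_i[\xi_i](X/\Gamma)\leq\limsup_i[\xi_i](K)+\limsup_i[\xi_i](\tilde O)\leq[\xi](K)+\delta\leq I(\nu,\lambda)+\delta$, using vague convergence $[\xi_i]\to[\xi]$ on $X/\Gamma$ for the compact part and the inequality $[\xi_i](P(O\cap X))\leq\xi_i(O)$ for the small part. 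This gives upper semicontinuity without ever having to show that $Y\cap X$ has null overlaps with its $\Gamma$-translates. If you want to salvage your route, the cleanest fix is to replace the fundamental-domain claim by this decomposition argument.
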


\begin{proof}
 Let $\{\nu_i\}_{i\in\N}\subset  {\mathcal C}_2(\Gamma)$ and $\{\lambda_i\}_{i\in\N}\subset \mathcal C(\Gamma)$ be sequences which converge as Radon measures to $\nu$ and $\lambda$, respectively. It follows by Stone-Weierstrass  that  $[\nu_i\times \lambda_i](f)\to[\nu\times \lambda](f)$ for every $f\in C_c(X/\Gamma)$. Hence
$$[\nu\times \lambda](X/\Gamma)\leq \liminf_{i\to\infty}[\nu_i\times \lambda_i](X/\Gamma).$$

Set $\xi_i=\nu_i\times\lambda_i$ and  $\xi=\nu \times\lambda$ as Radon measures on $QC_\eta \times \mathcal{G}$.  Consider
 $$H'=\{(\sigma,\{x,y\})\in QC_\eta \times \mathcal{G}: x \in \sigma {\rm \, or \, } y \in \sigma\}$$ 
 and notice that $H'$ is invariant by the action of $\Gamma$. We will show  that $\xi(H')=0$. For each $\{x,y\}\in \mathcal{G}$, the set
$$H'_{\{x,y\}}=\{\sigma \in QC_\eta: (\sigma, \{x,y\}) \in H'\}$$ 
satisfies $\nu(H'_{\{x,y\}})=\nu(H'_{\{x,y\}} \cap QC_0)=0.$ 
This follows from (\ref{santalo2}) and the invariance of $\nu$ by  $\text{M\"{o}b}(S^2)$. Then Fubini's theorem implies that
$\xi(H')=0$.

Let $Y=\{(\sigma,\partial_{\infty}\gamma)\in QC_\eta\times \mathcal G: \gamma\in G(\H^3), \text{conv}(\sigma)\cap \gamma\cap \bar \Delta\neq \emptyset\}$ as in \eqref{omega.definition} and recall that $P(Y\cap X)=X/\Gamma$. The set $H'\cap Y$ is compact and so 
given $\delta>0$ we find  $O$  an open neighborhood  of $H'\cap Y$ with $\xi(\bar{O})\leq \delta$. 
The set $\tilde{O}=P( O\cap X)$ is open in $X/\Gamma$.   Since $[\xi_i](P(O\cap X))\leq \xi_i(O)$, it follows that $\limsup_{i\to\infty} [\xi_i](\tilde O)\leq \delta$. The closure of $X$ in $QC_\eta\times \mathcal G$ is contained in $X \cup H'$ and thus the set $K=P(Y\cap X-O)$ is compact in $X/ \Gamma$. Therefore
$$\limsup_{i\to\infty} [\xi_i](X/\Gamma)\leq \limsup_{i\to\infty} ([\xi_i](K)+ [\xi_i](\tilde O))\leq  [\xi](K)+\delta\leq [\xi](X/\Gamma)+\delta.$$
This finishes the proof.
\end{proof}

Let $[\gamma]$ be a homotopy class of closed curves.

\begin{prop}\label{length.form}
Then $I(\nu,\delta_{[\gamma]})=\pi\, l_{\overline{g}}([\gamma])$.
\end{prop}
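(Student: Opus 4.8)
The plan is to compute $I(\nu,\delta_{[\gamma]})$ by unwinding both definitions and reducing to the Santal\'o formula \eqref{santalo2}. Recall $I(\nu,\delta_{[\gamma]})=[\nu\times\delta_{[\gamma]}](X/\Gamma)$, where $X=\{(\sigma,u)\in QC_0\times\mathcal G:\text{lk}(\sigma,u)=1\}$ (the support of $\nu$ forces $\sigma\in QC_0$), and where $\delta_{[\gamma]}=\sum_{\tilde\phi}\delta_{\tilde\phi(\{p,q\})}$, the sum over coset representatives of $\Gamma/H$ with $H=\langle\psi_0\rangle\cong\mathbb Z$ the stabilizer of the geodesic $\tilde\alpha$ with endpoints $\{p,q\}$. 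First I would choose a convenient test setup: since $[\nu\times\delta_{[\gamma]}](X/\Gamma)$ is defined via $[\nu\times\delta_{[\gamma]}](\tilde f)=(\nu\times\delta_{[\gamma]})(f)$ for $f\in C_c(X)$ with $f^\Gamma=\tilde f\circ P$, I would take an increasing sequence $f_n\in C_c(X)$ with $f_n\uparrow \mathbf 1_X$ and hence $\tilde f_n\uparrow \mathbf 1_{X/\Gamma}$ (using $\sigma$-compactness of $X$ from Section \ref{intersection}), so that by monotone convergence $[\nu\times\delta_{[\gamma]}](X/\Gamma)=\lim_n(\nu\times\delta_{[\gamma]})(f_n)$. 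The point is to identify $\lim_n (\nu\times\delta_{[\gamma]})(f_n)$.

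The key step is to pick the fundamental domain for the $\Gamma$-action on $X$ compatibly with the structure of $\delta_{[\gamma]}$. Because $\delta_{[\gamma]}$ is supported on the $\Gamma$-orbit of $\{p,q\}$, and the stabilizer of $\{p,q\}$ in $\Gamma$ is $H$, I would argue that summing $f^\Gamma$ against $\nu\times\delta_{[\gamma]}$ collapses: for $g\in C_c(X/\Gamma)$ we have
\begin{equation}\label{collapse}
[\nu\times\delta_{[\gamma]}](g)=\int_{QC_0}\sum_{\tilde\phi\in\Gamma/H}\mathbf 1_{\text{lk}(\sigma,\tilde\phi(\{p,q\}))=1}\,(g\circ P)(\sigma,\tilde\phi(\{p,q\}))\,d\nu(\sigma),
\end{equation}
and now reindexing the sum over $\Gamma/H$ as a sum over the distinct geodesics in the $\Gamma$-orbit of $\tilde\alpha$ projecting to the closed geodesic $\alpha$ in $M$, together with $\Gamma$-invariance of $\nu$, this should telescope to an integral over $QC_0/\Gamma$ counting, for each $\sigma$, the number of lifts of $\alpha$ that $D(\sigma)$ links — equivalently intersects, since $\text{lk}(\sigma,\partial_\infty\gamma)=1$ iff the totally geodesic disk $D(\sigma)$ meets $\gamma$ transversally in exactly one point. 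Concretely, picking a Dirichlet fundamental domain $\Delta\subset\H^3$ and using that $\delta_{[\gamma]}$ ``is'' the closed geodesic $\alpha$, I expect to arrive at
\begin{equation}\label{toSantalo}
I(\nu,\delta_{[\gamma]})=\int_{QC_0}\#\big(D(\sigma)\cap \widetilde\alpha_\Delta\big)\,d\nu(\sigma),
\end{equation}
where $\widetilde\alpha_\Delta$ is a fundamental segment of total length $l_{\overline g}([\gamma])=\len(\alpha)$ — one full period of the closed geodesic lifted into $\H^3$. Then \eqref{santalo2} gives $\int_{QC_0}\#(D(\sigma)\cap c)\,d\nu(\sigma)=\pi\,\len(c)$ for any curve $c\subset\H^3$, and applying it to $c=\widetilde\alpha_\Delta$ yields $I(\nu,\delta_{[\gamma]})=\pi\,l_{\overline g}([\gamma])$.

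The main obstacle is the bookkeeping in \eqref{collapse}–\eqref{toSantalo}: making precise the passage from ``sum over cosets $\Gamma/H$ of linking indicators, integrated against $\nu$ on all of $QC_0$'' to ``integral against $\nu$ on a fundamental domain of intersection counts with a single period of $\alpha$''. The cleanest way is probably to fix a fundamental domain $F\subset QC_0$ for the $\Gamma$-action (which exists since $\Gamma$ acts properly discontinuously on the relevant locally compact space, cf. the arguments around Lemma \ref{properly.discontinuous}), write $[\nu\times\delta_{[\gamma]}](X/\Gamma)$ as $\int_F (\text{number of }\tilde\phi\in\Gamma/H\text{ with }\text{lk}(\sigma,\tilde\phi\{p,q\})=1)\,d\nu(\sigma)$, and then observe that this count equals $\#(D(\sigma)\cap \pi^{-1}(\alpha))$ restricted appropriately — but since $D(\sigma)$ ranges over a fundamental domain while $\alpha$ is a fixed closed geodesic, by $\Gamma$-invariance of $\nu$ this is the same as fixing $D(\sigma)$ over all of $QC_0$ and counting intersections with one lift-period of $\alpha$, which is exactly the left side of \eqref{santalo2}. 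One should also check the boundary/measure-zero issue that $\nu$-a.e.\ $\sigma$ meets $\widetilde\alpha_\Delta$ transversally and avoids its endpoints, which follows from the $\text{M\"ob}(S^2)$-invariance of $\nu$ as in the proof of Proposition \ref{continuity.form}; and finiteness of the count for $\nu$-a.e.\ $\sigma$ follows from local finiteness of $\delta_{[\gamma]}$ together with \eqref{santalo2} itself.
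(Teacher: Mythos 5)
Your proposal is correct and takes essentially the same route as the paper: unwind $[\nu\times\delta_{[\gamma]}]$ using $\Gamma$-invariance of $\nu$, reduce to an intersection count against a single period of the closed geodesic, and invoke Santal\'o's formula \eqref{santalo2} to get $\pi\,l_{\overline g}([\gamma])$.

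One small caution: equation \eqref{collapse} as written is not correct, since it places $g\circ P=f^\Gamma$ (rather than a compactly supported lift $f$ with $f^\Gamma=g\circ P$) inside the product integral; the right side would then be a $\Gamma$-sum of a $\Gamma$-invariant function and diverge. You effectively repair this in the subsequent "cleanest way" paragraph, but the way the paper makes the unfolding precise is slightly different from your fundamental-domain-in-$QC_0$ sketch and avoids stabilizer/measure-zero caveats: it uses a locally finite partition of unity $\{f_i\}$ of $X/\Gamma$ with each $f_i$ supported where $P$ is a local homeomorphism, changes variables $\sigma\mapsto\tilde\psi\sigma$ by $\Gamma$-invariance of $\nu$, notes that the circles with $\mathrm{lk}(\sigma,\{p,q\})=1$ decompose as the disjoint union $\bigsqcup_{h\in H}hQC_0'$ where $QC_0'$ consists of circles whose disks hit one period $J\subset\tilde\alpha$, and then collapses the double sum over $\Gamma/H$ and $H$ to a sum over $\Gamma$, giving $\nu(QC_0')$. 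This is exactly your \eqref{toSantalo}, since a totally geodesic disk meets a transversal geodesic in at most one point, and the conclusion follows from \eqref{santalo2}. So the strategy is the same; the paper's partition-of-unity mechanism is just a cleaner way of executing the bookkeeping you correctly identified as the main obstacle.
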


\begin{proof}
 Let $\alpha$ be the closed geodesic in the homotopy class $[\gamma]$. Let $\tilde{\alpha}\in G(\H^3)$ be a lift of $\alpha$ with $\{p,q\}=\partial_\infty\tilde{\alpha}$ and $\alpha=\tilde{\alpha}/H$ for some $H<\Gamma$.  Consider $\{f_i\}_{i\in I}$ a
locally finite continuous partition of unity of $X/\Gamma$. Then $[\nu \times \delta_{[\gamma]}](X/\Gamma)= \sum_i [\nu \times \delta_{[\gamma]}](f_i)$.

We can suppose that the support of $f_i$ is contained in an open set $U_i\subset X/\Gamma$ such that there exists an open set $\tilde{U}_i$ with
$P:\tilde{U}_i\rightarrow U_i$ a homeomorphism. Let $\tilde{f}_i$ be the function with support contained in $\tilde{U}_i$ such that
$\tilde{f}_i=f_i \circ P$. Then $[\nu \times \delta_{[\gamma]}](f_i)=(\nu \times \delta_{[\gamma]})(\tilde{f}_i)$.

Let $J \subset \tilde{\alpha}$ be a segment of the form $[\tilde{\alpha}(t),\tilde{\alpha}(t+s))$  which is a fundamental domain for the projection $\tilde{\alpha}\rightarrow \alpha$. Let $QC_0'$ be the set of round circles $\sigma$ such that the totally geodesic disk $D_\sigma$ with $\partial D_\sigma=\sigma$
intersects $J$. Then the set of round circles $\sigma$ with ${\rm lk}(\sigma, \{p,q\})=1$ is the disjoint union $\bigcup_{h\in H} hQC_0'$.

It follows by using Fubini's theorem that
\begin{eqnarray*}
\sum_i [\nu \times \delta_{[\gamma]}](f_i)&=&\sum_i(\nu \times \delta_{[\gamma]})(\tilde{f}_i)
=\sum_i (\nu \times \sum_{\tilde{\psi}}\delta_{\tilde{\psi}(\{p,q\})})(\tilde{f}_i)\\
&=&\sum_i \int_{QC_0} \sum_{\tilde{\psi}} \tilde{f}_i(\sigma,\tilde{\psi}(\{p,q\}))\,d \nu (\sigma).\\
\end{eqnarray*}
Since  $\nu$ is a $\Gamma$-invariant measure, 
\begin{eqnarray*}
\sum_i [\nu \times \delta_{[\gamma]}](f_i)&=&\sum_i \int_{QC_0} \sum_{\tilde{\psi}} \tilde{f}_i(\tilde{\psi}\sigma,\tilde{\psi}(\{p,q\}))\,d \nu (\sigma)\\
&=&\sum_i \int_{\bigcup_{h\in H} hQC_0'} \sum_{\tilde{\psi}} \tilde{f}_i(\tilde{\psi}\sigma,\tilde{\psi}(\{p,q\}))\,d \nu (\sigma)\\
&=&\sum_i\sum_{h\in H} \int_{hQC_0'} \sum_{\tilde{\psi}} \tilde{f}_i(\tilde{\psi}\sigma,\tilde{\psi}(\{p,q\}))\,d \nu (\sigma)\\
&=&\sum_i\sum_{h\in H} \int_{QC_0'} \sum_{\tilde{\psi}} \tilde{f}_i(\tilde{\psi}h\sigma,\tilde{\psi}h(\{p,q\}))\,d \nu (\sigma)\\
&=&\sum_i\sum_{b \in \Gamma} \int_{QC_0'} \tilde{f}_i(b\sigma,\lambda(\{p,q\}))\,d \nu (\sigma).\\
\end{eqnarray*}
Hence, using \eqref{santalo2} in the last equality, 
$$
\sum_i [\nu \times \delta_{[\gamma]}](f_i)=\sum_i \int_{QC_0'} f_i(P(\sigma,\{p,q\}))\,d \nu (\sigma)
=\nu(QC_0')
=\pi l(\alpha),
$$
and the result follows because $I(\nu, \delta_{[\gamma]})=\sum_i [\nu \times \delta_{[\gamma]}](f_i)$.
\end{proof}

The intersection form is also related to the geometric intersection.

\begin{prop}\label{geometric.form}
For a homotopy class    of closed curves  $[\gamma]$ and  a homotopy class  of   quasi-Fuchsian  surfaces $\Pi$:
 \begin{itemize}
 \item[(i)] 
$I(\delta_{\Pi},\delta_{[\gamma]})\leq \#(\Sigma\cap \alpha),$
where $\Sigma$ is any immersed surface in $\Pi$ and $\alpha$ is any closed curve in $[\gamma]$ which intersect transversely; 
\item[(ii)] $I(\delta_{\Pi},\delta_{[\gamma]})= \#(\Sigma\cap \alpha),$ if $\Sigma \in \Pi$ is a  totally geodesic surface and $\alpha\in [\gamma]$ is a closed geodesic for the hyperbolic metric which intersect transversely.
\end{itemize}
\end{prop}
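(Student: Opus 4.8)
The plan is to compute $I(\delta_\Pi,\delta_{[\gamma]})=[\delta_\Pi\times\delta_{[\gamma]}](X/\Gamma)$ by unwinding the definitions, exactly as in the proof of Proposition \ref{length.form}, replacing the $\text{M\"ob}(S^2)$-invariant measure $\nu$ by the atomic measure $\delta_\Pi$. Fix a surface group $G\in\Pi$ with limit set $\sigma=\Lambda(G)$, so $\delta_\Pi=|H_\Pi:G|\sum_{\tilde\phi\in\Gamma/G}\delta_{\tilde\phi(\sigma)}$ where $H_\Pi=\{\phi:\phi(\sigma)=\sigma\}$; similarly fix a lift $\tilde\alpha$ of the closed geodesic $\alpha\in[\gamma]$ with endpoints $\{p,q\}$ and $\alpha=\tilde\alpha/H$, so $\delta_{[\gamma]}=\sum_{\tilde\psi\in\Gamma/H}\delta_{\tilde\psi(\{p,q\})}$. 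Using a locally finite partition of unity $\{f_i\}$ on $X/\Gamma$ subordinate to sets over which $P$ is a homeomorphism, and Fubini, one gets
\begin{equation*}
I(\delta_\Pi,\delta_{[\gamma]})=|H_\Pi:G|\sum_{\tilde\phi\in\Gamma/G}\,\sum_{\tilde\psi\in\Gamma/H}\ \mathbf 1\big[\mathrm{lk}(\tilde\phi(\sigma),\tilde\psi(\{p,q\}))=1\big]\Big/ |\Gamma|\text{-redundancy},
\end{equation*}
but more cleanly: after using $\Gamma$-invariance to pull one of the two group sums out, the sum collapses to counting, modulo the diagonal $\Gamma$-action, the pairs $(g_1 G, g_2 H)$ with $\mathrm{lk}(g_1\sigma,g_2\{p,q\})=1$, which in turn equals $\#\{gG\in\Gamma/G:\mathrm{lk}(g\sigma,\{p,q\})=1\}$ weighted by $|H_\Pi:G|$ — i.e.\ the number of $\Gamma$-translates of $\mathrm{conv}(\sigma)$ (equivalently of the lifted surface $D(\Pi)$) that separate $p$ from $q$, counted with the surface's self-intersection multiplicity. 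The key identity to extract is therefore
\begin{equation*}
I(\delta_\Pi,\delta_{[\gamma]})=\#\{\ \Gamma\text{-translates of }\tilde D_\Pi\ \text{linking}\ \tilde\alpha\ \},
\end{equation*}
where $\tilde D_\Pi\to M$ is the lifted immersion of a surface $\Sigma\in\Pi$; local finiteness of this count is guaranteed by Proposition \ref{locally.finite} together with the fact that each linking translate meets a fixed bounded neighborhood of $\tilde\alpha$ (via the convex-hull/Anderson-disk intersection property already used in Section \ref{intersection}).

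For part (i), I would compare this intrinsic linking count with the geometric intersection number $\#(\Sigma\cap\alpha)$ for an arbitrary immersed $\Sigma\in\Pi$ and transverse $\alpha\in[\gamma]$. The point is that each $g\in\Gamma/G$ with $\mathrm{lk}(g\sigma,\{p,q\})=1$ forces the corresponding translate of the lifted immersed surface $g\tilde i(D)$ to cross $\tilde\alpha$ topologically (since its asymptotic boundary $g\sigma$ separates the endpoints of $\tilde\alpha$, a Jordan-curve argument in $\overline{\mathbb H}^3$ shows any spanning surface must meet the geodesic), hence contributes at least one point of $\tilde i(D)\cap\tilde\alpha$; passing to the quotient, distinct cosets $gG$ give intersection points lying in distinct $H$-orbits on $\tilde\alpha$, hence distinct points of $\Sigma\cap\alpha$. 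This yields $I(\delta_\Pi,\delta_{[\gamma]})\le\#(\Sigma\cap\alpha)$. One must be slightly careful that $\#(\Sigma\cap\alpha)$ is defined as the number of pairs $(x,y)\in\Sigma\times S^1$ with $i_\Sigma(x)=i_\alpha(y)$, so self-intersections of $\Sigma$ and multiple wrappings of $\alpha$ are all counted; this is exactly what makes the inequality go the right way.

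For part (ii), when $\Sigma$ is totally geodesic the lift $\tilde D_\Pi$ is a totally geodesic disk $D(\sigma)=\mathrm{conv}(\sigma)$, which is convex; then a translate $gD(\sigma)$ meets the geodesic $\tilde\alpha$ \emph{if and only if} $g\sigma$ separates $p$ from $q$ (a disk whose boundary circle does not separate the two endpoints cannot cross the geodesic, by convexity and the disjointness of $\overline{gD(\sigma)}$ from a geodesic whose endpoints lie in one complementary disk of $g\sigma$), and when it does meet it does so transversely in exactly one point (two totally geodesic objects in $\mathbb H^3$ meeting transversely intersect in a single point). Thus the linking count equals $\#(\tilde D_\Pi\cap\tilde\alpha)$ modulo the appropriate groups, which equals $\#(\Sigma\cap\alpha)$, giving equality. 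The main obstacle I anticipate is bookkeeping the two coset sums and the diagonal $\Gamma$-action carefully enough to see that the double sum really does reduce to a single honest count without over- or under-counting (the $|H_\Pi:G|$ factor in $\delta_\Pi$ and the stabilizer $H$ of $\tilde\alpha$ must be tracked); the geometric/topological inputs — convexity of totally geodesic disks, the Jordan-separation argument, transverse intersection of geodesic submanifolds — are all standard, but assembling them so that the inequality in (i) and the equality in (ii) both fall out of the same formula is where the care lies.
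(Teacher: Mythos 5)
Your geometric intuition (linking of translated disks with the lifted geodesic, convexity giving exactly one transverse crossing in the totally geodesic case) matches the paper's, but the combinatorial bookkeeping — which you yourself flag as the main difficulty — contains a genuine error that the paper's argument is specifically structured to avoid.

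The ``key identity'' you extract,
$I(\delta_\Pi,\delta_{[\gamma]}) = \#\{\Gamma\text{-translates of }\tilde D_\Pi \text{ linking } \tilde\alpha\}$,
is wrong as stated: if there is even one such translate, there are infinitely many, since the stabilizer $H\cong\mathbb Z$ of $\tilde\alpha$ acts freely on them by translation along the axis. What $[\delta_\Pi\times\delta_{[\gamma]}](X/\Gamma)$ actually counts is the set $W/\Gamma$ of diagonal-$\Gamma$-orbits of pairs $(\tilde\phi G,\tilde\psi H)$ with $\mathrm{lk}(\tilde\phi(\sigma),\tilde\psi(\{p,q\}))=1$; normalizing the second coordinate leaves a residual left $H$-action on $\Gamma/G$, so the quantity is a count of \emph{double cosets} $H\backslash\{gG:\mathrm{lk}(g\sigma,\{p,q\})=1\}$, not of single cosets. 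Your subsequent claim that ``distinct cosets $gG$ give intersection points lying in distinct $H$-orbits on $\tilde\alpha$'' is false in exactly the way you'd need: cosets $gG$ and $hgG$ with $h\in H$ produce intersection points lying in the \emph{same} $H$-orbit, so your proposed injection collapses. (The factor $|H_\Pi:G|$ in your expression for $\delta_\Pi$ is also spurious — $\delta_\Pi$ is by definition the sum over coset representatives of $\Gamma/G$, and the $|H:G|$ factor only arises when comparing $\delta_\Pi$ to the unweighted orbit measure $\delta_{\Lambda(G)}$, which you don't need here.)

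The paper sidesteps this entirely: rather than reducing to a count of translates of one object against a fixed other, it keeps both coset variables, defines $W\subset(\Gamma/G)\times(\Gamma/H)$ with the linking condition, observes that $\Gamma$ acts freely on $W$ so $I(\delta_\Pi,\delta_{[\gamma]})=\#(W/\Gamma)$, and then constructs an \emph{explicit} map $z:W/\Gamma\to Z$ by choosing, for each orbit, a representative pair, an intersection point $\tilde z(r)$ of the associated translated disk and geodesic, and projecting to $(x(r),y(r))\in\Sigma\times S^1$. Injectivity of $z$ is a short computation in $\Gamma$ (no Jordan-curve argument needed), giving (i); surjectivity in the totally geodesic case (using that a totally geodesic disk and a transverse geodesic line meet iff their boundaries link, and then in exactly one point) gives (ii). To repair your argument you would need to replace the single-coset count by the double-coset count and re-do the injectivity/surjectivity, at which point you are essentially forced into the paper's $W/\Gamma$ setup.
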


\begin{proof}
 Let $[\gamma]$ be a homotopy class of closed curves, and $\Pi$ be a homotopy class
of quasi-Fuchsian surfaces.  Let $\Sigma \in \Pi$ be a surface which induces the group $G < \Gamma$. Set $\sigma=\Lambda(G)\in QC_\eta$ 
and let $D\subset \H^3$ be a  $G$-invariant disk with $\partial_{\infty}D=\sigma$ and $\Sigma=D/G$. Consider  a curve $\alpha$ in $[\gamma]$ and let $\tilde{\alpha}\in G(\H^3)$ be a lift of $\alpha$ with $\{p,q\}=\partial_\infty\tilde{\alpha}$ and $\alpha=\tilde{\alpha}/H$ for some $H<\Gamma$.   If $i_\Sigma$ and $i_\alpha$
denote the immersions $i_\Sigma:\Sigma \rightarrow M$ and $i_\alpha:S^1\rightarrow M$, which intersect transversally,  the intersection number 
$\# (\Sigma \cap \alpha)$ is the number of elements in the set 
$$
Z=\{(x,y)\in \Sigma \times S^1: i_\Sigma(x)=i_\alpha(y)\}.
$$

Recall that $\delta_\Pi=\sum_{\tilde{\phi}} \delta_{\tilde{\phi}(\sigma)}$ and   $\delta_{[\gamma]}=\sum_{\tilde{\psi}}\delta_{\tilde{\psi}(\{p,q\})}$, where $\{\tilde{\phi}\}$ is a set of representatives of the cosets in $\Gamma/G$ and $\{\tilde{\psi}\}$ is a set of representatives of the cosets in $\Gamma/H$. 
We consider the set $W$ of pairs $(\tilde{\phi},\tilde{\psi})$  such that ${\rm lk}(\tilde{\phi}(\sigma), \tilde{\psi}(\{p,q\}))=1.$
Then $\Gamma$ acts on $W$ by setting $b\cdot (\tilde{\phi},\tilde{\psi})=(\tilde{\phi}'\in [b \tilde{\phi}], \tilde{\psi}'\in [b \tilde{\psi}]).$
Since $\Gamma$ acts freely on $X$, the same holds for the action of $\Gamma$ on $W$. Hence $[\delta_\Pi \times \delta_{[\gamma]}](X/\Gamma)$ is  the number of elements of $W\setminus \Gamma$.

We define a map $z:W\setminus \Gamma \rightarrow Z$. For each $r\in W\setminus \Gamma$, we choose an element  $(\tilde{\phi}(r),\tilde{\psi}(r))\in r$.
Since ${\rm lk}(\tilde{\phi}(r)(\sigma),\tilde{\psi}(r)(\{p,q\}))=1$, we can choose a point $\tilde{z}(r)\in \tilde{\phi}(r)(D) \cap \tilde{\psi}(r)(\tilde{\alpha})$.
Let $\tilde{x}(r)\in D$ and $\tilde{y}(r)\in \tilde{\alpha}$ such that $\tilde{\phi}(r)(\tilde{x}(r))=\tilde{\psi}(r)(\tilde{y}(r))=\tilde{z}(r)$.  
If $\pi_\Sigma:D\rightarrow \Sigma$ and $\pi_\alpha:\tilde{\alpha}\rightarrow \alpha$ denote the projections, define
$$
z(r)=(x(r),y(r))=(\pi_\Sigma(\tilde{x}(r)),\pi_\alpha(\tilde{y}(r))) \in Z.
$$

We claim that the map $z$ is injective. 
Suppose $z(r_1)=z(r_2)$, for $r_1,r_2\in W\setminus \Gamma$.  Then $\pi_\Sigma(\tilde{x}(r_1))=\pi_\Sigma(\tilde{x}(r_2))$ and 
$\pi_\alpha(\tilde{y}(r_1))=\pi_\alpha(\tilde{y}(r_2))$. Hence there exist $g\in G$ and $h\in H$ such that $\tilde{x}(r_2)=g \tilde{x}(r_1)$ and 
$\tilde{y}(r_2)=h \tilde{y}(r_1)$. Denote by $p=\tilde{x}(r_1)$ and $q=\tilde{y}(r_1)$ as points in $\mathbb{H}^3$, and let
$\tilde{\phi}_i=\tilde{\phi}(r_i)$, $\tilde{\psi}_i=\tilde{\psi}(r_i)$ for $i=1,2$. Then $\tilde{\phi}_1(p)=\tilde{\psi}_1(q)$ and 
$\tilde{\phi}_2g(p)=\tilde{\psi}_2h(q).$ Hence there is $b \in \Gamma$ such that
$\tilde{\psi}_1^{-1}\tilde{\phi}_1=h^{-1}\tilde{\psi}_2^{-1}\tilde{\phi}_2g=b$. This implies  
$$
(\tilde{\phi}_2g\tilde{\phi}_1^{-1})\tilde{\psi}_1=\tilde{\phi}_2 g b^{-1}=\tilde{\psi}_2h.
$$
Thus  $(\tilde{\phi}_2g\tilde{\phi}_1^{-1})\cdot (\tilde{\phi}_1,\tilde{\psi}_1)=(\tilde{\phi}_2,\tilde{\psi}_2)$, which proves $r_1=r_2$. Since $z$ is an injective map, we have that
$$
I(\delta_\Pi,\delta_{[\gamma]})=[\delta_\Pi \times \delta_{[\gamma]}](X/\Gamma)=\# (W\setminus \Gamma)\leq \#Z=\# (\Sigma \cap \alpha).
$$
This proves (i).

Suppose that $\Sigma$ is totally geodesic and $\alpha$ is a closed geodesic for the hyperbolic metric. We claim that the map $z$ is surjective.
Let $z=(x,y)\in W$. Let $p\in \mathbb{H}^3$ such that $\pi_M(p)=i_\Sigma(x)=i_\alpha(y)$. Consider $\tilde{x}\in D$ and $\tilde{y}\in \tilde{\alpha}$ such that
$\pi_\Sigma(\tilde{x})=x$ and $\pi_\alpha(\tilde{y})=y$. Hence there exist $\phi,\psi\in \Gamma$ such that $\phi(\tilde{x})=p$ and $\psi(\tilde{y})=p$.
We can choose $\tilde{\phi},\tilde{\psi}$ such that $\phi=\tilde{\phi}g$ and $\psi=\tilde{\psi}h$ for $g\in G$, $h\in H$. Then $\tilde{\phi} g(\tilde{x})=\tilde{\psi}h(\tilde{y})=p$.
Hence $\tilde{\phi}(D)$ intersects $\tilde{\psi}(\tilde{\alpha})$. But a totally geodesic disk and a transversal geodesic line in the hyperbolic space intersect if and only if the linking
number of their boundaries is  equal to one. Therefore $(\tilde{\phi},\tilde{\psi})\in W$. Let $r=[(\tilde{\phi},\tilde{\psi})]\in W\setminus \Gamma$. Since $[(\tilde{\phi}(r),\tilde{\psi}(r))]=r$, it follows that  there are $b\in \Gamma$, $g'\in G$, and $h'\in H$, such that
$$
b \tilde{\phi}= \tilde{\phi}(r)g', \, \, \, b \tilde{\psi}=\tilde{\psi}(r)h'.
$$
Recall that $\tilde{z}(r)$ is an element of the intersection $\tilde{\phi}(r)(D)\cap \tilde{\psi}(r)(\tilde{\alpha})$. Necessarily $b(p)\in \tilde{\phi}(r)(D)\cap \tilde{\psi}(r)(\tilde{\alpha})$\mbox{ and so,} because the intersection is a point, it follows that $\tilde{z}(r)=b(p)$. Notice that $\tilde{\phi}(r)g'g(\tilde{x})=\tilde{\psi}(r)h'h(\tilde{y})=b(p)=\tilde{z}(r)$. Since $D$ and $\tilde{\alpha}$ are embedded, $\tilde{x}(r)=g'g(\tilde{x})$ and $\tilde{y}(r)=h'h(\tilde{y}).$ Hence
$$
z(r)=(\pi_\Sigma(g'g(\tilde{x})),\pi_\alpha(h'h(\tilde{y})))=(x,y).
$$
This proves $z$ is surjective, and hence by the previous argument $z$ is a bijection. 
Thus
$$
I(\delta_\Pi,\delta_{[\gamma]})=[\delta_\Pi \times \delta_{[\gamma]}](X/\Gamma)=\# (W\setminus \Gamma)= \# (\Sigma \cap \alpha),
$$
which proves (ii).
\end{proof}

\section{Proof of Theorem \ref{intersection.number.thm}}\label{proof.intersection}

 Let $g$ be a metric of negative sectional curvature on $M$. Let $\Sigma \in \Pi$ be a surface which induces the group $G < \Gamma$. Set $\sigma={\Lambda}(G)\in QC_\eta$ 
and let $D\subset \H^3$ be a  $G$-invariant disk with $\partial_{\infty}D=\sigma$ and $\Sigma=D/G$.  Let $\Omega\subset D$ be a fundamental domain for the projection $D\rightarrow \Sigma$. Consider $\mathcal{G}'$ the set of $u\in \mathcal{G}$ such that ${\rm lk}(\sigma,u)=1$ and the geodesic $\gamma_g(u)$ intersects $\Omega$. Then
the set of elements $u\in \mathcal{G}$ such that ${\rm lk}(\sigma,u)=1$ is contained in the union $\bigcup_{g\in G} g\mathcal{G}'$. We can choose a partition
of unity $\{f_i\}_i$ as in the proof of Proposition \ref{length.form}.

Then, using Fubini's theorem as before  and the $\Gamma$-invariance of $\lambda_g$,
\begin{eqnarray*}
[\delta_\Pi \times \lambda_g](X/\Gamma) &=& \sum_i [\delta_\Pi \times \lambda_g](f_i)= \sum_i (\delta_\Pi \times \lambda_g)(\tilde{f}_i)\\
&=&\sum_i (\sum_{\tilde{\phi}} \delta_{\tilde{\phi}(\sigma)} \times \lambda_g)(\tilde{f}_i)
=\sum_i \int_{\mathcal{G}} \sum_{\tilde{\phi}} \tilde{f}_i(\tilde{\phi}(\sigma),u)\, d\lambda_g(u)\\
&=&\sum_i \int_{\mathcal{G}} \sum_{\tilde{\phi}} \tilde{f}_i(\tilde{\phi}(\sigma),\tilde{\phi}(u))\, d\lambda_g(u).\\
\end{eqnarray*}
Hence
\begin{eqnarray*}
[\delta_\Pi \times \lambda_g](X/\Gamma)&\leq& \sum_i \int_{\bigcup_{g\in G} g\mathcal{G}'} \sum_{\tilde{\phi}} \tilde{f}_i(\tilde{\phi}(\sigma),\tilde{\phi}(u))\, d\lambda_g(u)\\
&\leq& \sum_i \sum_{g\in G} \int_{g\mathcal{G}'} \sum_{\tilde{\phi}} \tilde{f}_i(\tilde{\phi}(\sigma),\tilde{\phi}(u))\, d\lambda_g(u)\\
&=&\sum_i \sum_{g\in G} \int_{\mathcal{G}'} \sum_{\tilde{\phi}} \tilde{f}_i(\tilde{\phi}(g(\sigma)),\tilde{\phi}(g(u)))\, d\lambda_g(u)\\
&=&\sum_i \sum_{\lambda\in \Gamma} \int_{\mathcal{G}'}  \tilde{f}_i(\lambda(\sigma),\lambda(u))\, d\lambda_g(u)\\
&=&\sum_i  \int_{\mathcal{G}'}  f_i(P(\sigma,u))\, d\lambda_g(u).\\
\end{eqnarray*}
Therefore, using \eqref{santalo.formula}, we have
\begin{multline*}
[\delta_\Pi \times \lambda_g](X/\Gamma)=\lambda_g(\mathcal{G}')
\leq \int_{\mathcal{G}} \# \, (\gamma_g(u)\cap \Omega) \, d\lambda_g(u)\\
=\pi {\rm area}_g(\Omega)=\pi {\rm area}_g(\Sigma).
\end{multline*}
Since this is for any  $\Sigma \in \Pi$, it follows that $I(\delta_\Pi,\lambda_g)\leq \pi {\rm area}_g(\Pi).$

Suppose $I(\delta_\Pi,\lambda_g)= \pi {\rm area}_g(\Pi),$ and choose $\Sigma\in \Pi$ such that ${\rm area}_g(\Sigma)={\rm area}_g(\Pi)$. Then  $$\lambda_g(\mathcal{G}')= \int_{\mathcal{G}} \# \, (\gamma_g(u)\cap \Omega) \, d\lambda_g(u),$$ which implies $\# \, (\gamma_g(u)\cap \Omega)\leq 1$ for $\lambda_g$-almost every $u\in \mathcal{G}$.
 If $\Sigma$ is not totally geodesic for the  metric $g$, we can find a point $p$ in the interior of $\Omega$  with a nonvanishing principal curvature. A local construction gives a geodesic $\gamma$ in $(\H^3,g)$ so that, near $p$,  $\gamma$ intersects $\Omega$ transversely in two  points.  Thus we can find an open neighborhood $V$ of  $\partial_{\infty}\gamma$ in $\mathcal G$ so that for any $v\in V$  we have $\#(\gamma_g(v)\cap\Omega)\geq 2.$
Since open sets of $\mathcal{G}$ have positive $\lambda_g$ measure, this is a contradiction. Hence $\Sigma$ is totally geodesic as we wanted to show. This proves (i).

From \cite[Theorem 1]{sigmund}, there is  a sequence $\{\gamma_i\}_{i\in\N}$ of closed geodesics of $(M,g)$ which becomes equidistributed in the metric $g$ (see \eqref{equidis}). Thus by  Proposition \ref{convergence.geodesic}, Proposition \ref{continuity.form} and Proposition \ref{length.form}, 
\begin{align*}
(2\pi \text{\rm vol}_g(M))^{-1}I(\nu,\lambda_g)&=\lim_{i\to\infty}\frac{I(\nu,\delta_{[\gamma_i]})}{l_g(\gamma_i)}=\pi\lim_{i\to\infty}\frac{l([\gamma_i])}{l_g(\gamma_i)}.
\end{align*}
By \cite[Lemma 2.19]{schapira}, $i(g,\overline{g})=\lim_{i\to\infty}\frac{l([\gamma_i])}{l_g(\gamma_i)}$. This proves Theorem \ref{intersection.number.thm} (ii).

\section{Proof of Theorem \ref{product.entropy.thm}, Corollary \ref{totally.geodesic.entropy}, and Corollary \ref{area.distortion}}\label{proof.second.thm}

\noindent{\bf Proof of Theorem \ref{product.entropy.thm}:} From  \eqref{alternative.entropy.mu} with $\mu=\overline{\nu}$, there is a sequence $\Pi_i\in\mathcal S_{1/i}(M)$  such that $\nu_{\Pi_i}\rightarrow \overline{\nu}$ and
$$
E_{\overline{\nu}}(g)=2\lim_i\frac{\text{area}(\Pi_i)}{\text{area}_g(\Pi_i)}.
$$
It follows by Proposition \ref{convergence} that
$\text{area}(\Pi_i)^{-1}{\delta_{\Pi_i}}\rightarrow {(2\pi\text{\rm vol}(M))}^{-1}{\nu}$.
Thus using Proposition \ref{continuity.form} and Theorem \ref{intersection.number.thm}, we have
\begin{align*}
\liminf_i\frac{\text{area}_g(\Pi_i)}{\text{area}(\Pi_i)} &\geq \frac 1 \pi \liminf_i I\left(\frac{\delta_{\Pi_i}}{\text{area}(\Pi_i)},\lambda_g\right)\\
& =\frac{1}{2\pi^2\text{\rm vol}(M)} I(\nu,\lambda_g)
 \geq \frac{\text{\rm vol}_g(M)}{\text{\rm vol}(M)}i(g,\overline{g}).
\end{align*}
We have $h(\bar g)=2$ and so $i(g,\overline{g})\geq \frac{h_L(g)}{2}$ by Theorem 1.1 of \cite{knieper-stretch}. Hence $$E_{\overline{\nu}}(g)h_L(g)\text{\rm vol}_g(M)\leq 4\text{\rm vol}_{\overline{g}}(M).$$
Suppose $E_{\overline{\nu}}(g)h_L(g)\text{\rm vol}_g(M)=4\text{\rm vol}_{\overline{g}}(M).$
It suffices to show that   $Gr_2(M)$ is foliated by totally geodesic disks for the metric $g$ because if that is the case then it follows from the  Codazzi equations that the metric $g$ has constant sectional curvature.

Let $\Sigma_i \in \Pi$ be a surface which induces the group $G_i < \Gamma$ and such that ${\rm area}_g(\Sigma_i)={\rm area}_g(\Pi_i)$. 
Let $D_i\subset \H^3$ be a  $G_i$-invariant disk with  $\Sigma_i=D_i/G_i$, and $\Omega_i\subset D_i$ be a fundamental domain for the
projection $D_i \rightarrow \Sigma_i$.

We will use the notation introduced in Section \ref{integral.geometry}. Consider the set
 $T_i=\{u\in \mathcal G:\, \gamma_g(u)\cap \Omega_i\neq \emptyset\},$
and the  function
 $$F_i:\mathcal G\rightarrow \Z, \quad F_i(u)=\min\{\#(\gamma_g(u)\cap \Omega_i)-1,1\}.$$
Consider the  $G_i$-invariant function $f_i$ on $D_i$ such that 
 $$ f_i(x)=\frac{1}{2}\int_{p_1^{-1}(x)}F_i\circ p_2(x,v)\, |\langle v, N_{\Sigma_i}(x)\rangle |d\omega_x(v)$$
for every $x\in \Omega_i$.
 Then $f_i\geq 0$. Using \eqref{santalo.formula0} we have that
\begin{equation}\label{crofton}
\int_{\Omega_i} f_i\, dA_g=\int_{T_i} F_i(u)\#(\gamma_g(u)\cap \Omega_i)d\lambda_g(u).
 \end{equation}

Since
 $$
\lim_i\frac{\text{area}_g(\Pi_i)}{\text{area}(\Pi_i)}= \frac{1}{\pi} \lim_i I\left(\frac{\delta_{\Pi_i}}{\text{area}(\Pi_i)},\lambda_g\right),
$$
the proof of Theorem \ref{intersection.number.thm} (i) implies that
$$
\lim_{i\to\infty}\frac{\lambda_g(T_i)}{\text{area}_g(\Pi_i)}=\lim_{i\to\infty}\frac{1}{\text{area}_g(\Pi_i)}\int_{T_i}\#(\gamma_g(u)\cap \Omega_{i})d\lambda_g(u).
$$
Hence
\begin{equation}\label{asym.equality}
\lim_{i\to\infty}\frac{1}{\text{area}_g(\Pi_i)}\int_{T_i}\{\#(\gamma_g(u)\cap \Omega_{i})-1\}d\lambda_g(u)=0.
\end{equation}
For $u\in T_i$,   $F_i(u)\#(\gamma_g(u)\cap \Omega_i)\leq 2\big(\#(\gamma_g(u)\cap \Omega_i)-1\big)$. Thus we deduce from  \eqref{crofton} and \eqref{asym.equality} that
\begin{equation*}\label{average.crofton2}
\lim_{i\to\infty}\frac{1}{\text{area}_g(\Pi_i)}\int_{\Omega_i} f_i\,dA_g=0.
 \end{equation*}
Since the sequence of homotopy groups $\{\Pi_i\}_{i\in\N}$ becomes equidistributed, we proceed  as in the proof of  Corollary 6.2  
 of \cite{calegari-marques-neves} to obtain, after passing to a subsequence,  $\phi_i\in \Gamma$  such that 
\begin{itemize}
\item[(a)] $\Lambda(\phi_iG_i\phi_i^{-1})$ converges  in Hausdorff distance, as $i\to\infty$, to $\sigma\in QC_0$;
\item[(b)]  $\{\phi(\sigma):\phi\in\Gamma\}$ is dense in $QC_0$;
\item[(c)]  for any $R>0$,
\begin{equation}\label{contradictions}\lim_{i\to\infty}\int_{\phi_i(D_i)\cap B_R(0)}f_i\circ \phi_i^{-1}dA_g=0.\end{equation}
\end{itemize}
 There is  $R>0$ so that $\text{conv}(\Lambda(\phi_iG_i\phi_i^{-1}))$ intersects $B_R(0)$ for every $i\in\N$. The surface $\Sigma_i$  is a stable minimal surface  for the metric $g$, hence the sequence $\phi_i(D_i)$  satisfies  uniform curvature estimates \cite{schoen}. Thus  the continuity property of convex hulls discussed in Section \ref{convex.hull} and Theorem 3.1 of \cite{calegari-marques-neves} gives the existence of a minimal disk $D$ in $(\H^3,g)$ so that $\partial_{\infty}D=\sigma$ and $\phi_i(D_i)$ converges strongly to $D$, after passing to a subsequence, on compact subsets of $\H^3$. 
 
 We argue that $D$ is totally geodesic for the metric $g$. Suppose there exists $y\in D$ such that the principal curvatures of $D$ at $y$ are nontrivial.  Then as before  there is $u\in \mathcal{G}$ such that $\gamma_g(u)$ intersects $D$ transversally at both $y$ and at $y'$, $y'\neq y$, nearby. By the smooth convergence there is an open set   $V\subset \mathcal G$ with $u\in V$  such that, for sufficiently large $i$, $\gamma_g(v)$ intersects $\phi_i(D_i)$ transversally  at points close to $y,y'$ for every $v\in V$. The injectivity radius of the surfaces $\Sigma_i$ is uniformly bounded from below.  By choosing $y$ and $y'$ sufficiently close to each other, one can choose $s>0$ and the fundamental domains $\Omega_i$ such that 
 $\# (\gamma_g(v) \cap \phi_i(\Omega_i) \cap B_s(y))\geq 2$ for every sufficiently large $i$ and every $v\in V$.  By replacing $D_i$ for $\phi_i(D_i)$ 
 in the previous equalities we can suppose that $\phi_i$ is the identity map. Hence, by \eqref{santalo.formula0},
  \begin{eqnarray*}
 \int_{\Omega_i\cap B_s(y)}f_i\, dA_g
 &=&  \int_{\mathcal{G}} \# (\gamma_g(u) \cap  \Omega_i\cap B_s(y))F_i(u) \, d\lambda_g(u)\\
 &\geq &\int_{V} \# (\gamma_g(u) \cap  \Omega_i\cap B_s(y))F_i(u) \, d\lambda_g(u)
 \geq 2 \lambda_g(V).
 \end{eqnarray*}
 This contradicts (c), since $\lambda_g(V)>0$. Therefore $D$ is totally geodesic.

From (b), the orbit of  $\sigma$ under $\Gamma$ is dense in $QC_0$. Thus by taking limits we obtain that for any $\sigma\in QC_0$ there is  a totally geodesic disk $D_{\sigma}$ in $(\H^3,g)$ with $\partial_{\infty}D_\sigma=\sigma$. A totally geodesic disk is  uniquely determined by its boundary  at infinity. Hence $D_\sigma$ depends continuously on $\sigma$ and  $\phi(D_\sigma)=D_{\phi(\sigma)}$ for every $\phi\in\Gamma$.  

The  set $\{(\sigma,x)\in QC_0\times \H^3:x\in D_\sigma\}$ is a five-dimensional manifold and the map $$\Phi:\{(\sigma,x)\in QC_0\times \H^3:x\in D_\sigma\}\rightarrow Gr_2(\H^3), \quad\Phi(\sigma,x)=(x,T_xD_\sigma),$$ is continuous and $\Gamma$-equivariant. Hence it induces a continuous map
$$\Phi':\mathcal{R}(\Gamma)\rightarrow Gr_2(M),$$
where $\mathcal{R}(\Gamma)=\{(\sigma,x)\in QC_0\times \H^3:x\in D_\sigma\}/\Gamma.$
The totally geodesic condition  implies that $\Phi'$ is injective. Since both  $\mathcal{R}(\Gamma)$ and $Gr_2(M)$ are compact  five-dimensional  manifolds,  invariance of domain  implies that $\Phi$ is a homeomorphism. Hence there is a totally geodesic surface tangent to any  element in $Gr_2(M)$. This finishes the proof
of Theorem \ref{product.entropy.thm}.
\medskip

\noindent{\bf Proof of Corollary \ref{totally.geodesic.entropy}:} The next proposition combined with Theorem \ref{product.entropy.thm} implies a direction of Corollary \ref{totally.geodesic.entropy}.
\begin{prop}\label{no.totally.geodesic}
If $(M,\overline{g})$ does not have compact totally geodesic surfaces, then $E_{\overline{\nu}}(g)=E(g)$ for any metric $g$ on $M$.
\end{prop}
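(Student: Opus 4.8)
The plan is to deduce the Proposition from Proposition~\ref{alternative.definition}, its extension \eqref{alternative.entropy.mu}, and the Ratner--Shah classification recalled in the introduction. Since $\mathcal{S}_{\eta,\overline{\nu}}(M)\subset\mathcal{S}_{\eta}(M)$ one always has $A_{\overline{\nu}}(g)\geq A(g)$, hence $E_{\overline{\nu}}(g)=2/A_{\overline{\nu}}(g)\leq 2/A(g)=E(g)$, and it remains to prove $A_{\overline{\nu}}(g)\leq A(g)$. Using $A(g)=\lim_{\eta\to 0}A(g,\eta)$, choose $\Pi_i\in\mathcal{S}_{1/i}(M)$ with $\text{area}_g(\Pi_i)/\text{area}(\Pi_i)\to A(g)$. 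The heart of the matter is the claim that $\nu_{\Pi_i}\to\overline{\nu}$ as $i\to\infty$. Granting it, fix $\eta>0$; for all $i$ large enough that $1/i<\eta$ and $\rho(\nu_{\Pi_i},\overline{\nu})<\eta$ we have $\Pi_i\in\mathcal{S}_{1/i}(M)\subset\mathcal{S}_{\eta}(M)$ and $\rho(\nu_{\Pi_i},\overline{\nu})<\eta$, so $\Pi_i\in\mathcal{S}_{\eta,\overline{\nu}}(M)$ and therefore $A_{\overline{\nu}}(g,\eta)\leq\text{area}_g(\Pi_i)/\text{area}(\Pi_i)$; letting $i\to\infty$ gives $A_{\overline{\nu}}(g,\eta)\leq A(g)$, and then $\eta\to 0$ yields $A_{\overline{\nu}}(g)\leq A(g)$, which together with $E(g)=2/A(g)$ and $E_{\overline{\nu}}(g)=2/A_{\overline{\nu}}(g)$ finishes the proof.

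To prove the claim, let $\Sigma_i=\Sigma(\Pi_i)\in\Pi_i$ be the $\overline{g}$-minimal surface (defined for $i$ large, since then $1/i<\tilde{\eta}$) and recall that $\nu_{\Pi_i}$ is its normalized area measure lifted to $Gr_2(M)$ by $x\mapsto(x,T_x\Sigma_i)$. Since $\Pi_i\in\mathcal{S}_{1/i}(M)$, \cite{seppi} gives $|A_{\Sigma_i}|\leq C\ln(1+1/i)\to 0$, so together with minimality and elliptic estimates the immersed surfaces $\Sigma_i$ become, on balls of fixed radius, $C^1$-close to totally geodesic disks; moreover $\text{area}(\Pi_i)\to\infty$, since otherwise a subsequence of the $\Sigma_i$ would have uniformly bounded area, genus (by Gauss--Bonnet, since $|A_{\Sigma_i}|\to 0$) and curvature, hence subconverge to a closed totally geodesic surface, contradicting the hypothesis. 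Passing to a subsequence with $\nu_{\Pi_i}\to\mu_\infty$ in the weak-$*$ topology on unit Radon measures on the compact space $Gr_2(M)$, one argues essentially as in \cite{calegari-marques-neves} (cf.\ the proof of Corollary~6.2 there) that $\mu_\infty$ is invariant under the homogeneous $\mathrm{PSL}(2,\mathbb{R})$-action: the lift of a totally geodesic disk is a piece of a $\mathrm{PSL}(2,\mathbb{R})$-orbit, so $\nu_{\Pi_i}$ is approximately $\mathrm{PSL}(2,\mathbb{R})$-invariant up to boundary terms that are negligible because $\text{area}(\Pi_i)\to\infty$.

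Finally, the Ratner--Shah theorem \cite{ratner,shah} applies to $\mu_\infty$: it is a convex combination $c_0\overline{\nu}+\sum_j c_j\mu_{S_j}$ with $c_j\geq 0$, $c_0+\sum_j c_j=1$, and each $\mu_{S_j}$ the normalized area measure of a closed totally geodesic surface $S_j\subset(M,\overline{g})$. By hypothesis there are no such surfaces, so $\mu_\infty=\overline{\nu}$. Since this holds along every weak-$*$ convergent subsequence, $\nu_{\Pi_i}\to\overline{\nu}$, which proves the claim.

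The main obstacle is the middle step: verifying that the weak-$*$ limit $\mu_\infty$ is genuinely invariant under the homogeneous $\mathrm{PSL}(2,\mathbb{R})$-action. This is precisely where the curvature estimate of \cite{seppi} and the fact that $\text{area}(\Pi_i)\to\infty$ enter, and where one must convert \emph{asymptotically totally geodesic minimal surface} into \emph{asymptotically $\mathrm{PSL}(2,\mathbb{R})$-invariant measure}, keeping track of the fact that the $\Sigma_i$ are only immersed; everything else is bookkeeping with the formulas $E(g)=2/A(g)$, $E_\mu(g)=2/A_\mu(g)$ and the cited classification.
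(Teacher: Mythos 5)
Your proposal is correct and follows essentially the same route as the paper: extract an extremal sequence $\Pi_i\in\mathcal S_{1/i}(M)$ realizing $A(g)$, show the associated normalized measures of the hyperbolic-minimal representatives converge to $\overline{\nu}$ via $\mathrm{PSL}(2,\mathbb R)$-invariance of the weak-$*$ limit plus Ratner--Shah and the absence of closed totally geodesic surfaces, and then deduce $A_{\overline{\nu}}(g)\leq A(g)$ hence $E_{\overline{\nu}}(g)\geq E(g)$. The one place you flag as "the main obstacle" (invariance of the limit measure) is precisely where the paper delegates to \cite[Lemma 3.2]{lowe-neves}, working on the frame bundle $F(M)$ rather than $Gr_2(M)$, so that step is legitimately a black box and your sketch of it is consistent with the cited argument.
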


\begin{proof}
Let  $\Pi_i\in\mathcal S_{1/i}(M)$ be a sequence such that  $$A(g)=\lim_i\frac{\text{area}_g(\Pi_i)}{\text{area}(\Pi_i)}.$$
 Let $\Sigma_i\in \Pi_i$ be such that  ${\rm area}(\Sigma_i)={\rm area}(\Pi_i)$. We claim that the sequence $\{\Pi_i\}_{i\in\N}$ becomes equidistributed.

Let $F(\Sigma)$ denote the frame bundle of an orientable immersed surface $\Sigma$. The metric $\overline{g}$ induces a natural measure $d\theta$ on  $F(\Sigma)$ of total measure $2\pi\text{area}(\Sigma).$  Define the following measure on  $F(M)$, the frame bundle   of $M$:
$$
\nu_i(\psi)=\frac{1}{2\pi\text{area}(\Pi_i)}\int_{F(\Sigma_i)}\psi(x,\{e_1,e_2,n(x)\})\, d\theta(x,e_1,e_2),
$$
where $\psi:F(M)\rightarrow\mathbb{R}$ is a continuous function and  $n(x)$ is the normal vector to $\Sigma_i$ at $x$. After passing to a subsequence, we obtain that the sequence $\nu_i$ converges to a unit measure $\tilde{\nu}$ on $F(M)$ that is invariant under the homogeneous  $\rm{PSL}(2,\R)$-action (see \cite[Lemma 3.2]{lowe-neves}).  Using  that $M$ has no closed totally geodesic surfaces, we obtain from Ratner's classification theorem \cite{ratner}  that $\tilde{\nu}$ is the homogeneous unit measure on $F(M)$. Hence $\nu_{\Pi_i}\rightarrow \overline{\nu}$, which implies that  the sequence $\{\Pi_i\}_{i\in\N}$ equidistributes. By (\ref{alternative.entropy.mu}) and Proposition \ref{alternative.definition},  $E_{\overline{\nu}}(g)\geq E(g)$ and hence $E_{\overline{\nu}}(g)=E(g)$.
 
\end{proof}

The next proposition then finishes the proof of Corollary \ref{totally.geodesic.entropy}.
\begin{prop}\label{not.true}
Suppose $(M,\overline{g})$ has compact totally geodesic surfaces. Then  there is a sequence of metrics $g_i$ on $M$ which converges to  $\overline{g}$ smoothly and such that
$$E(g_i)h_L(g_i)\text{\rm vol}_{g_i}(M)> 4\text{\rm vol}_{\overline{g}}(M)$$
for any $i$.
\end{prop}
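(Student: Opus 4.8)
The plan is to perturb $\overline{g}$ conformally near a totally geodesic surface, so that $E(g)$ increases while $h_L(g)\text{\rm vol}_g(M)$ changes only mildly. Fix an immersed closed totally geodesic surface $i:S\to M$ of $(M,\overline{g})$. Its image has measure zero, so choose an open set $W\supset i(S)$ with $\text{\rm vol}_{\overline{g}}(W)<\text{\rm vol}_{\overline{g}}(M)$, and a function $v\in C^\infty(M)$ with $-1\le v\le 0$, $\text{supp}(v)\subset W$, and $v\equiv -1$ on a neighborhood of $i(S)$ (so that $v\circ i\equiv -1$). Set $g_\epsilon=e^{2\epsilon v}\,\overline{g}$. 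For $\epsilon>0$ small, $g_\epsilon$ has negative sectional curvature and $g_\epsilon\to\overline{g}$ in $C^\infty$ as $\epsilon\to 0$, so it suffices to show $E(g_\epsilon)h_L(g_\epsilon)\text{\rm vol}_{g_\epsilon}(M)>4\text{\rm vol}_{\overline{g}}(M)$ for all sufficiently small $\epsilon>0$, and then take $g_i=g_{\epsilon_i}$ with $\epsilon_i\downarrow 0$.

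First I would bound $E(g_\epsilon)$ from below. The subgroup $i_*\pi_1(S)<\Gamma$ is quasi-Fuchsian with round-circle limit set, so its conjugacy class $\Pi_0$ lies in $\mathcal{S}_0(M)\subset \mathcal{S}_\eta(M)$ for every $\eta>0$; by the uniqueness of area-minimizing minimal surfaces in such classes (Section \ref{essential.section}) one has $\Sigma(\Pi_0)=(S,i)$ and $\text{\rm area}_{\overline{g}}(\Pi_0)=\text{\rm area}_{\overline{g}}(S)$. Since $v\circ i\equiv -1$ we have $i^*g_\epsilon=e^{-2\epsilon}(i^*\overline{g})$, so, using $i$ as a comparison surface, $\text{\rm area}_{g_\epsilon}(\Pi_0)\le e^{-2\epsilon}\,\text{\rm area}_{\overline{g}}(\Pi_0)$. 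Hence $A(g_\epsilon,\eta)\le e^{-2\epsilon}$ for every $\eta>0$, so $A(g_\epsilon)\le e^{-2\epsilon}$, and Proposition \ref{alternative.definition} gives $E(g_\epsilon)=2/A(g_\epsilon)\ge 2e^{2\epsilon}$.

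The main ingredient is the first variation of the Liouville entropy along this family. Write $\beta(\epsilon)=h_L(g_\epsilon)$ and $V(\epsilon)=\text{\rm vol}_{g_\epsilon}(M)=\int_M e^{3\epsilon v}\,dV_{\overline{g}}$, so $\beta(0)=2$, $V(0)=\text{\rm vol}_{\overline{g}}(M)$ and $V'(0)=3\int_M v\,dV_{\overline{g}}$. I would use that $h_L$ and $h_{top}$ depend differentiably on the metric for Anosov geodesic flows (Katok--Knieper--Pollicott--Weiss and subsequent work); that $h_L\le h_{top}$ always, with $h_L(\overline{g})=h_{top}(\overline{g})=2$, so that $\beta'(0)$ equals the derivative at $0$ of $\epsilon\mapsto h_{top}(g_\epsilon)$; that by Manning's theorem $h_{top}(g_\epsilon)=h_{vol}(g_\epsilon)$; and that by the Besson--Courtois--Gallot theorem \cite{BCG} the functional $g\mapsto h_{vol}(g)^3\text{\rm vol}_g(M)$ is minimized at $\overline{g}$. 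Differentiating $\epsilon\mapsto h_{vol}(g_\epsilon)^3V(\epsilon)$ at its interior minimum $\epsilon=0$, where $h_{vol}=2$, gives $12\,\beta'(0)\,\text{\rm vol}_{\overline{g}}(M)+8\,V'(0)=0$, that is, $\beta'(0)=-\tfrac{2}{\text{\rm vol}_{\overline{g}}(M)}\int_M v\,dV_{\overline{g}}$.

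Finally I would set $F(\epsilon)=2e^{2\epsilon}\beta(\epsilon)V(\epsilon)$, so that $F(\epsilon)\le E(g_\epsilon)h_L(g_\epsilon)\text{\rm vol}_{g_\epsilon}(M)$ by the first step and $F(0)=4\text{\rm vol}_{\overline{g}}(M)$. The logarithmic derivative is $F'(0)/F(0)=2+\beta'(0)/2+V'(0)/\text{\rm vol}_{\overline{g}}(M)=2+\tfrac{2}{\text{\rm vol}_{\overline{g}}(M)}\int_M v\,dV_{\overline{g}}$, hence $F'(0)=8\big(\text{\rm vol}_{\overline{g}}(M)+\int_M v\,dV_{\overline{g}}\big)$, which is strictly positive because $\int_M v\,dV_{\overline{g}}\ge -\text{\rm vol}_{\overline{g}}(W)>-\text{\rm vol}_{\overline{g}}(M)$. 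Thus $F(\epsilon)>4\text{\rm vol}_{\overline{g}}(M)$ for all sufficiently small $\epsilon>0$, which proves the proposition. I expect the main obstacle to be making the entropy first-variation step rigorous: it relies on differentiability of $h_L$ and $h_{top}$ in the metric and on identifying $\beta'(0)$ via the chain $h_L=h_{top}=h_{vol}$ at $\overline{g}$ together with the minimizing property of the hyperbolic metric, and one should check that the cited results apply in the stated generality (in particular the smooth dependence of the Liouville entropy on the metric).
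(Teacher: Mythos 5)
Your proof is correct, and it takes a genuinely different route from the paper's. The paper argues by contradiction: it assumes $E(g)h_L(g)\mathrm{vol}_g(M)\le 4\mathrm{vol}_{\overline g}(M)$ holds on a smooth neighborhood of $\overline g$, normalizes the inequality so it becomes an upper bound for $h_L(\tilde g)\,\mathrm{vol}_g(M)^{2/3}$ in terms of $\mathrm{area}_g(\Pi)$, and then perturbs the metric \emph{away} from the totally geodesic surface $\Sigma$ with a conformal factor $\geq 1$, so that $\mathrm{area}_{g(t)}(\Pi)$ stays frozen at $\mathrm{area}(\Sigma)$ while the volume term strictly increases; the Katok--Knieper--Weiss criticality of the Liouville entropy at $\overline g$ among unit-volume metrics kills the entropy term and yields a contradiction. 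You instead construct the sequence directly: your conformal factor is $\le 1$ and equals $e^{-2\epsilon}$ \emph{near} $\Sigma$, which forces $A(g_\epsilon)\le e^{-2\epsilon}$ and hence $E(g_\epsilon)\ge 2e^{2\epsilon}$, and you then show $F(\epsilon)=2e^{2\epsilon}\beta(\epsilon)V(\epsilon)$ has $F'(0)>0$. The only thing worth flagging is the detour you take to identify $\beta'(0)$: you go through $h_L\le h_{top}$ with equality at $\overline g$, Manning's theorem, and the BCG minimization of $h_{vol}^3\mathrm{vol}$. This is correct (given the differentiability of both $h_L$ and $h_{top}$ along the family, which KKW and Katok--Knieper--Pollicott--Weiss supply), but it is more machinery than needed: the scaling identity $h_L(c^2g)=c^{-1}h_L(g)$ together with the KKW criticality of $h_L$ among metrics of fixed volume gives $\beta'(0)=-\tfrac{2}{3}V'(0)/V(0)$ directly, which is exactly the input the paper uses. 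Both your argument and the paper's hinge on the differentiability of the Liouville entropy, and that is correctly identified as the essential nontrivial input.
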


\begin{proof}
Let $i:\Sigma \rightarrow (M,\overline{g})$ be a compact totally geodesic surface inducing the surface group $G< \Gamma$, and let $\Pi$ be the conjugacy class of $G$.  Then $\Pi\in S_0(M)$, since the surface $\Sigma$ lifts to a totally geodesic disk in $\H^3$. Therefore 
$A(g)\leq \frac{\text{\rm area}_g(\Pi)}{\text{\rm area}(\Pi)}$ for any metric $g$ on $M$ and so by  Proposition \ref{alternative.definition}, $E(g)\geq 2\frac{\text{\rm area}(\Pi)}{\text{\rm area}_g(\Pi)}$ for any $g$.

Suppose by contradiction that the proposition is false. Then there is an open set $\mathcal{V}$ of Riemannian metrics in the smooth topology with $\overline{g}\in \mathcal{V}$ such that  $E(g)h_L(g)\text{\rm vol}_{g}(M)\leq 4\text{\rm vol}_{\overline{g}}(M)$
for any $g\in \mathcal{V}$. Combining with $E(g)\geq 2\frac{\text{\rm area}(\Pi)}{\text{\rm area}_g(\Pi)}$ we have that
$$
\frac{\text{\rm area}(\Pi)}{\text{\rm area}_g(\Pi)}h_L(g)\text{\rm vol}_{g}(M)\leq 2\text{\rm vol}_{\overline{g}}(M)
$$
and so
$$
\frac{\text{\rm area}(\Pi)}{2\text{\rm vol}_{\overline{g}}(M)}h_L(g)\text{\rm vol}_{g}(M)\leq \text{\rm area}_g(\Pi) 
$$
for any $g\in \mathcal{V}$, with equality at $g=\overline{g}$. Since $h_L(c^2g)=c^{-1}h_L(g)$, 
\begin{equation}\label{differentiate}
\frac{\text{\rm area}(\Pi)}{2\text{\rm vol}_{\overline{g}}(M)^\frac23}
h_L\big(\frac{\text{\rm vol}_{\overline{g}}(M)^\frac23}{\text{\rm vol}_g(M)^\frac23}g\big)
\text{\rm vol}_{g}(M)^\frac23\leq \text{\rm area}_g(\Pi) 
\end{equation}
for  $g\in \mathcal{V}$, with equality at $g=\overline{g}$.  

Let $U$ be an open set of $M$ which is disjoint from $\Sigma$. Consider $f:M\rightarrow \mathbb{R}$ a smooth nonnegative function  with support contained in $U$ such that $f$ is positive somewhere. Define the variation of metrics $g(t)=(1+tf)^2\overline{g}$ such that $g(0)=\overline{g}$.  Since $\Sigma$ is the unique minimal surface in its homotopy class, it follows that $\text{area}(\Sigma)=\text{area}(\Pi)$.  For $t\geq 0$, $g(t)\geq \overline{g}$ on $M$. Since 
$\text{area}_{g(t)}(\Sigma)=\text{area}_{\overline{g}}(\Sigma)$, it follows that  ${\rm area}_{g(t)}(\Pi)=\text{area}(\Sigma)$ for $t\geq 0$.  Therefore, by (\ref{differentiate}) with $g=g(t)$,
$$
\frac{d}{dt}_{|t=0} 
\big(h_L\big(\tilde{g}(t)\big)
\text{\rm vol}_{g(t)}(M)^\frac23\big)\leq 0,
$$
where $\tilde{g}(t)=\frac{\text{\rm vol}_{\overline{g}}(M)^\frac23}{\text{\rm vol}_{g(t)}(M)^\frac23}g(t)$. Notice that $\text{\rm vol}\big(\tilde{g}(t)\big)=\text{\rm vol}(\overline{g})$ for any $t$. This is a contradiction, since the hyperbolic metric is a critical point of the Liouville entropy when restricted
to metrics of the same volume (\cite{katok-knieper-weiss}),  and $\frac{d}{dt}_{|t=0} \text{\rm vol}_{g(t)}(M)>0$. This proves  the  proposition.
\end{proof}

\noindent{\bf Proof of Corollary \ref{area.distortion}:}
As before, by  (\ref{alternative.entropy.mu}) with $\mu=\overline{\nu}$, there is a sequence $\Pi_i\in\mathcal S_{1/i}(M)$  such that $\nu_{\Pi_i}\rightarrow \overline{\nu}$ and
$$
E_{\overline{\nu}}(g)=2\lim_i\frac{\text{area}(\Pi_i)}{\text{area}_g(\Pi_i)}.
$$

We argue like in \cite[Proposition 6.3]{lowe-neves}. Let $\Sigma_i\in \Pi_i$ be such that ${\rm area}(\Sigma_i)={\rm area}(\Pi_i)$.   Since 
$$
{\rm area}_g(\Sigma_i)=\int_{\Sigma_i} \frac{{\rm area}_g}{{\rm area}_{\overline{g}}}(x,T_x\Sigma_i)(d\Sigma_i)_{\overline{g}}(x),
$$
and the sequence $\{\Pi\}_i$ becomes equidistributed, it follows that
$$
\lim_i\frac{{\rm area}_g(\Sigma_i)}{{\rm area}(\Sigma_i)}=\frac{1}{{\rm vol}(Gr_2(M))}\int_{Gr_2(M)} \frac{{\rm area}_g}{{\rm area}_{\overline{g}}}(x,P)dV_{\overline{g}}=A(g/\bar g).
$$
Since  ${\rm area}_g(\Pi_i)\leq {\rm area}_g(\Sigma_i)$ we deduce $A(g/\bar g)E_{\overline{\nu}}(g)\geq 2$ and so, by
 Theorem \ref{product.entropy.thm},
$$
 h_L(g) {\rm vol}_g(M)\leq 2 A(g/\bar g)\text{\rm vol}_{\overline{g}}(M).
$$
If equality holds then  $A(g/\bar g)E_{\overline{\nu}}(g)=2$ and $g$ has constant sectional curvature.  This implies    $E_{\overline{\nu}}(g)=E(g)$ and thus  $A(g/\bar g)E(g)=2$. Theorem 1.2 in \cite{lowe-neves} shows that $g$ must be a multiple of $\bar g$.

\section{Mostow Rigidity Theorem}\label{mostow.rigidity}
 In this section we prove:
 \begin{thm}[Mostow Rigidity]\label{mostow} Consider two closed hyperbolic manifolds   $M=\H^3/\Gamma_M$ and $N=\H^3/\Gamma_N$ for which there is   a homotopy equivalence $F:N\rightarrow M$.  Then $F$ is homotopic to an isometry $G:N\rightarrow M$.
 \end{thm}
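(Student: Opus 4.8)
\emph{Reduction to the boundary map.} The plan follows the strategy sketched in the introduction, carried out for a general homotopy equivalence rather than a diffeomorphism; the goal is to show that the boundary homeomorphism induced by $F$ carries round circles to round circles. Since $\Gamma_M$ and $\Gamma_N$ act cocompactly on $\H^3$, a lift of $F$ to the universal covers is a quasi-isometry equivariant for the isomorphism $\rho=F_*:\Gamma_N\to\Gamma_M$, and therefore extends to a $\rho$-equivariant quasiconformal homeomorphism $\tilde F:S^2\to S^2$ of the boundary at infinity. In particular, if $H<\Gamma_N$ is quasi-Fuchsian then so is $\rho(H)<\Gamma_M$, and $\tilde F(\Lambda(H))=\Lambda(\rho(H))$. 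It suffices to prove that $\tilde F$ maps the space $QC_0$ of round circles into itself: applying the same to a homotopy inverse of $F$ gives the reverse inclusion, so $\tilde F$ restricts to a homeomorphism of $QC_0$, and by Carath\'eodory's theorem \cite{caratheodory} it is then the restriction to $S^2$ of some $I\in\text{Isom}(\H^3)$; the equivariance gives $I\Gamma_N I^{-1}=\Gamma_M$, so $I$ descends to an isometry $G:N\to M$ with $G_*=F_*$, and since $N$ and $M$ are aspherical, $G$ is homotopic to $F$.

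\emph{The main inequality.} Replacing $F$ by a homotopy inverse if necessary, assume $\text{vol}_{\overline{g}_M}(M)\le\text{vol}_{\overline{g}_N}(N)$. Let $\{\Pi_i\}\subset\mathcal S_{1/i}(M)$ be the equidistributing sequence used in \eqref{sktech.proof}, with surface groups $G_i<\Gamma_M$ and hyperbolic minimal surfaces $\Sigma_i=\Sigma(\Pi_i)$, which become totally geodesic by \cite{seppi}. Let $\Pi_i^N$ be the homotopy class in $N$ of the quasi-Fuchsian group $\rho^{-1}(G_i)<\Gamma_N$, whose limit set is $\tilde F^{-1}(\Lambda(G_i))$, and let $S_i^N\in\Pi_i^N$ be an area-minimizing surface. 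Since the intersection form depends only on linking numbers, which $\tilde F$ preserves, one has $I_{\Gamma_M}(\delta_{\Pi_i},\lambda_F)=I_{\Gamma_N}(\delta_{\Pi_i^N},\lambda_{\overline{g}_N})$, where $\lambda_F=(\tilde F)_*\lambda_{\overline{g}_N}\in\mathcal C(\Gamma_M)$; and by the computation in the proof of Theorem \ref{intersection.number.thm}(ii), using Propositions \ref{convergence.geodesic}, \ref{length.form}, \ref{continuity.form} and the characterization of the geodesic stretch as a limit of length ratios, $I_{\Gamma_M}(\nu,\lambda_F)=2\pi^2\,\text{vol}_{\overline{g}_N}(N)\,i_F(\overline{g}_N,\overline{g}_M)$. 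Combining these with Theorem \ref{intersection.number.thm}(i) applied in $N$, with Knieper's inequality \cite{knieper-stretch} (which gives $i_F(\overline{g}_N,\overline{g}_M)\ge1$, both entropies being $2$), and with Propositions \ref{convergence} and \ref{continuity.form}, we obtain the analogue of \eqref{sktech.proof},
$$\text{vol}_{\overline{g}_N}(N)\ \le\ \text{vol}_{\overline{g}_N}(N)\,i_F(\overline{g}_N,\overline{g}_M)\ \le\ \text{vol}_{\overline{g}_M}(M)\,\lim_{i\to\infty}\frac{\text{area}_{\overline{g}_N}(\Pi_i^N)}{\text{area}_{\overline{g}_M}(\Pi_i)}.$$
Since $\Pi_i^N$ and $\Pi_i$ have the same genus $g_i$, and since the Gauss--Bonnet theorem gives $\text{area}_{\overline{g}_N}(S_i^N)\le 4\pi(g_i-1)$ while $\text{area}_{\overline{g}_M}(\Pi_i)=4\pi(g_i-1)(1+o(1))$ by \cite{seppi}, the limiting ratio is at most $1$; together with $\text{vol}_{\overline{g}_M}(M)\le\text{vol}_{\overline{g}_N}(N)$ this forces it to equal $1$, and hence, again by Gauss--Bonnet, $\text{area}_{\overline{g}_N}(S_i^N)^{-1}\int_{S_i^N}|A_{S_i^N}|^2\,dA_{\overline{g}_N}\to0$.

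\emph{Passing to totally geodesic disks.} We then argue as in the proof of Theorem \ref{product.entropy.thm}. Using the equidistribution of $\{\Pi_i\}$ and the argument of \cite{calegari-marques-neves}, after passing to a subsequence there are $\phi_i\in\Gamma_M$ with $\Lambda(\phi_iG_i\phi_i^{-1})\to\sigma\in QC_0$, with $\Gamma_M\sigma$ dense in $QC_0$, and with the curvature decay above localizing along the translations $\rho^{-1}(\phi_i)$. The lifts of $S_i^N$, translated by $\rho^{-1}(\phi_i)$, have uniformly bounded second fundamental form on compact sets (by stability estimates \cite{schoen}) and convex hulls meeting a fixed ball (by continuity of convex hulls \cite{bowditch}, as $\tilde F^{-1}(\phi_i\Lambda(G_i))\to\tilde F^{-1}(\sigma)$); passing to a smooth limit and running the transversal double-intersection argument of Theorem \ref{product.entropy.thm}, the limit minimal disk $D^N\subset\H^3$ with $\partial_{\infty}D^N=\tilde F^{-1}(\sigma)$ is totally geodesic, so $\tilde F^{-1}(\sigma)$ is a round circle. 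By the $\rho$-equivariance of $\tilde F$ and the fact that each $\rho(\gamma)$ acts on $S^2$ as a M\"obius transformation, $\tilde F^{-1}$ sends the dense set $\Gamma_M\sigma$ into $QC_0$; since $\tilde F^{-1}$ is continuous and $QC_0$ is closed in $QC_\eta$, $\tilde F^{-1}(QC_0)\subset QC_0$, and the same reasoning applied to $F$ gives $\tilde F(QC_0)\subset QC_0$, which completes the argument.

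\emph{The main obstacle.} Two points need work. First, one must set up the ``map'' versions of Theorem \ref{intersection.number.thm} and Proposition \ref{convergence} invoked above --- the $\Gamma$-invariant currents attached to $F$ and the two hyperbolic structures, the corresponding Santal\'o-type identities, and especially the asymptotic form of the rigidity in part (i). Second, and this is the genuinely new difficulty compared with the diffeomorphism case sketched in the introduction, one must transport the equidistribution of $\{\Pi_i\}$ on $Gr_2(M)$ into a \emph{localized} bound on the second fundamental form of the surfaces $S_i^N\subset N$: in the diffeomorphism case the two competing minimal surfaces sit in a common copy of $\H^3$ at bounded Hausdorff distance, whereas here they are related only through the quasi-isometry $\tilde F^{-1}$, and the equidistribution must be pushed through it with enough precision to feed the double-intersection argument.
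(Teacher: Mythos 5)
Your proposal matches the paper's own argument step for step: the reduction to the boundary map, the volume inequality via the intersection form applied to an equidistributing sequence $\{\Pi_i\}$, the Gauss--Bonnet derivation of $\text{area}_{\overline g_N}(\tilde\Sigma_i)^{-1}\int_{\tilde\Sigma_i}|A|^2\to 0$, the Ratner--Shah localization, and the Carath\'eodory conclusion are all exactly what the paper does. The two obstacles you flag at the end are precisely what the paper fills in, and both turn out to be manageable along the lines you anticipate: the ``map'' versions of the currents machinery are Propositions 6.2--6.4, where Proposition 6.2 builds an orbit equivalence $\chi:T^1_{\overline g_N}(\H^3)\to T^1_{\overline g_M}(\H^3)$ using Busemann functions, Proposition 6.3 expresses $i_F$ as a limit of length ratios over an equidistributing sequence, and Proposition 6.4 proves $I_{\Gamma_M}(\nu,(\tilde F)_*\lambda_{\overline g_N})=2\pi^2\mathrm{vol}_{\overline g_N}(N)\,i_F(\overline g_N,\overline g_M)\geq\pi^2\mathrm{vol}_{\overline g_N}(N)\,h_L(\overline g_N)$ via Abramov's formula --- the last inequality does need a new argument since Knieper's cited theorem assumes $F=\mathrm{id}$, so your appeal to \cite{knieper-stretch} there is the spot where the real work hides; and the localization of the curvature decay goes through by applying the Calegari--Marques--Neves pigeonhole argument directly to the $E_*(G_i)$-invariant minimal disks $\Omega_i\subset\H^3$, on which the translates $E_*(\phi_i)$ act group-theoretically, so the equidistribution on $Gr_2(M)$ feeds the double-intersection argument without any precision loss across the quasi-isometry --- the concern is real but resolves more cleanly than you expect.
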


Let $E:M\rightarrow N$ denote the homotopic inverse of $F$. The lifts $F:\H^3\rightarrow \H^3$, $E:\H^3\rightarrow \H^3$ can be assumed to be $C^1$ and both maps are quasi-isometries (see \cite[Chapter C]{benedetti} for instance). They induce  homeomorphisms on the boundary at infinity $\tilde{F}:S^2\rightarrow S^2$, $\tilde{E}:S^2\rightarrow S^2$, with  $\tilde{E}\circ \tilde{F}=\tilde{F}\circ \tilde{E}= id_{S^2}.$ The maps $\tilde{E},\tilde{F}$ are quasi-conformal  and thus map quasi-circles to quasi-circles. The maps  $F_{*}:\Gamma_N\rightarrow \Gamma_M$ and $E_{*}:\Gamma_M\rightarrow \Gamma_N$ are inverses of each other. For  $\phi\in\Gamma_N$, we have $F_*(\phi)\circ F=F\circ \phi$ and $F_*(\phi)\circ \tilde{F}=\tilde{F}\circ \phi$. For $\psi\in \Gamma_M$, $E_*(\psi)\circ E=E\circ \psi$ and $E_*(\psi)\circ \tilde{E}=\tilde{E}\circ \psi$.

If $\Pi<\Gamma_M$ is a homotopy class of essential surfaces and $G\in \Pi$, then $E_*(G)$ is  a  surface group of $\Gamma_N$ whose  conjugacy class coincides  with $E_*(\Pi)=\{E_*(G):G\in \Pi\}$. If $\Pi$ is quasi-Fuchsian, then $E_*(\Pi)$ is also quasi-Fuchsian since $\tilde{E}$ is quasi-conformal. The map $\tilde{F}$ induces a homeomorphism on the space of all geodesics of $\H^3$.  Hence  if $\lambda\in \mathcal C(\Gamma_N)$ is a  geodesic current, then  $(\tilde{F})_{*}\lambda\in \mathcal C(\Gamma_M)$. Notice that
 if $\Pi_N<\Gamma_N$ is a homotopy class of quasi-Fuchsian surfaces, then $(\tilde{F})_*\delta_{\Pi_N}=\delta_{F_*(\Pi_N)}.$

We claim that the proof of Theorem \ref{intersection.number.thm} (i) can be extended to show that for any negatively curved metric $g$ on $N$ and every quasi-Fuchsian homotopy class  $\Pi<\Gamma_M$, we have
\begin{equation}\label{new.4}
I_{\Gamma_M}(\delta_{\Pi},(\tilde{F})_*\lambda_g)\leq \pi\text{area}_g(E_*(\Pi)).
\end{equation}
Consider $\{f_i\}_i$ as the partition of unity  of the proof of Proposition \ref{length.form}.
Since $\delta_\Pi=(\tilde{F})_*\delta_{E_*(\Pi)}$, it follows as in the proof of Theorem \ref{intersection.number.thm} (i),
\begin{eqnarray*}
&&[\delta_\Pi \times (\tilde{F})_*\lambda_g](X/\Gamma_M) = \sum_i [\delta_\Pi \times (\tilde{F})_*\lambda_g](f_i)= \sum_i (\delta_\Pi \times (\tilde{F})_*\lambda_g)(\tilde{f}_i)\\
&&= \sum_i ((\tilde{F})_*\delta_{E_*(\Pi)} \times (\tilde{F})_*\lambda_g)(\tilde{f}_i)\\
&&= \sum_i (\delta_{E_*(\Pi)} \times \lambda_g)(\tilde{f}_i\circ \tilde{F})=[\delta_{E_*(\Pi)} \times \lambda_g](X/\Gamma_N). \\
\end{eqnarray*}
Hence $I_{\Gamma_M}(\delta_{\Pi},(\tilde{F})_*\lambda_g)=I_{\Gamma_N}(\delta_{E_*(\Pi)},\lambda_g)$. Therefore (\ref{new.4}) follows from  Theorem \ref{intersection.number.thm} (i).

The next proposition follows by adapting the construction made in  \cite[Theorem 2.12]{knieper} using Busemann functions instead of orthogonal projections. If  the map $F$ is the identity, a map $\chi$ can be constructed as in \cite[Section 2.4]{schapira}. Let us denote by $\{g^t\}_{t\in \mathbb{R}}$ the geodesic flow of a Riemannian metric $g$.

\begin{prop}\label{gromov-correspondence} Consider two negatively curved metrics $(N,g_1)$ and $(M,g_2)$. There is a homeomorphism  $\chi:T^1_{g_1}(\H^3)\rightarrow T^1_{g_2}(\H^3)$ so that
\begin{itemize}
\item[(i)]$F_{*}(\phi)(\chi(v))=\chi(\phi(v))$ for any $\phi\in \Gamma_N$ and hence $\chi$ induces  a homeomorphism $$\chi:T^1_{g_1}(N)\rightarrow T^1_{g_2}(M);$$
\item[(ii)] if $\gamma_v$ and $\sigma_{\chi(v)}$ are the unique unit speed geodesics  generated by $v$ and $\chi(v)$, then 
$\tilde{F}(\partial_{\infty}\gamma_v)=\partial_{\infty}\sigma_{\chi(v)}$;
\item[(iii)] there is a H\"older continuous function $T:\R\times T^1_{g_1}(\H^3)\rightarrow \R$ which is $C^1$ along the $t$-direction and is such that $g_{2}^{T(t,v)}(\chi(v))=\chi(g_1^t(v))$ for any $v\in T^1_{g_1}(\H^3)$, $t\in\R$;
\item[(iv)] the function $\Psi(v)=\frac{d}{dt}T(0,v)>0$ is  H\"older continuous;
\item[(v)] there is a constant $C>0$ such that $d_{g_2}(\pi(\chi(v)),F(\pi(v)))\leq C$ for any $v\in T^1_{g_1}(\H^3)$, where $\pi(v)$ denotes the basepoint of $v$.
\end{itemize}
\end{prop}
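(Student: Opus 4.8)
The plan is to build $\chi$ by hand from the boundary homeomorphism $\tilde F$ together with the Busemann functions of $g_2$, following the scheme of \cite[Theorem 2.12]{knieper} but replacing the orthogonal projection onto geodesics by a horospherical (Busemann) projection. The point of the replacement is that $F$ is only a homotopy equivalence, so its $C^1$ lift need not be an immersion, and the orthogonal‑projection recipe would not yield a time reparametrization with positive derivative.

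First I would fix the image geodesics. For $v\in T^1_{g_1}(\H^3)$ let $\gamma_v$ be the $g_1$-geodesic generated by $v$ and $\xi^\pm(v)=\gamma_v(\pm\infty)$; since the lift of $F$ is a quasi-isometry, $F\circ\gamma_v$ is a $g_2$-quasigeodesic with endpoints $\tilde F\xi^\pm(v)$, and I let $\sigma_v$ be the unique $g_2$-geodesic with these endpoints, oriented towards $\tilde F\xi^+(v)$. By the Morse/stability lemma, $F(\gamma_v)$ and $\sigma_v$ stay within a uniform Hausdorff distance $\Delta$, which gives (ii). Next I pick the basepoint: for a fixed large constant $D$, let $q_v\in\sigma_v$ be the unique point with
\[
 b^{g_2}_{\tilde F\xi^+(v)}(q_v)=\frac1D\int_0^D b^{g_2}_{\tilde F\xi^+(v)}\big(F(\gamma_v(r))\big)\,dr,
\]
an equation that is normalization–independent (the additive constant cancels) and has a unique solution because $b^{g_2}_{\tilde F\xi^+(v)}$ is proper and strictly monotone — equal to signed arclength — along $\sigma_v$; I set $\chi(v)$ to be the unit tangent to $\sigma_v$ at $q_v$ pointing towards $\tilde F\xi^+(v)$. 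Bounding $b^{g_2}_{\tilde F\xi^+(v)}(q_v)-b^{g_2}_{\tilde F\xi^+(v)}(F(\pi v))$ by the quasi-isometry bound over a window of length $D$ and comparing along $\sigma_v$ yields (v) with $C$ depending only on $D$, $\Delta$ and the quasi-isometry constants. Equivariance (i) follows because, for $\phi\in\Gamma_N$, one has $\xi^\pm(\phi v)=\phi\,\xi^\pm(v)$, $\tilde F\phi=F_*(\phi)\tilde F$, $F\gamma_{\phi v}=F_*(\phi)F\gamma_v$, and $F_*(\phi)\in\Gamma_M$ is a $g_2$-isometry of $\H^3$, so the defining equation for $q_v$ is carried by $F_*(\phi)$ and hence $\chi(\phi v)=F_*(\phi)(\chi(v))$.

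For the orbit equivalence I note that $\sigma_{g_1^tv}=\sigma_v$ (endpoints are flow‑invariant), so $\chi(g_1^tv)$ lies on $\sigma_v$ and there is a unique $T(t,v)$ with $g_2^{T(t,v)}(\chi(v))=\chi(g_1^tv)$, namely $T(t,v)=\frac1D\int_0^D b^{g_2}_{\tilde F\xi^+(v)}(F\gamma_v(r))\,dr-\frac1D\int_t^{t+D}b^{g_2}_{\tilde F\xi^+(v)}(F\gamma_v(r))\,dr$, which is a cocycle over the flow. Since the metrics are pinched, $b^{g_2}_\eta$ is $C^{1,\alpha}$, so $r\mapsto b^{g_2}_{\tilde F\xi^+(v)}(F\gamma_v(r))$ is $C^1$ (chain rule, $F\in C^1$) and thus $t\mapsto T(t,v)$ is $C^1$ with $\partial_tT(t,v)=\frac1D\big(b^{g_2}_{\tilde F\xi^+(v)}(F\gamma_v(t))-b^{g_2}_{\tilde F\xi^+(v)}(F\gamma_v(t+D))\big)$. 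Here the averaging pays off: a Busemann function decreases along a quasigeodesic ray towards its endpoint at a coarsely linear rate, so for $D$ large enough this quantity is bounded below by a positive constant; hence $\Psi(v)=\partial_tT(0,v)>0$, giving (iii) and (iv). Hölder regularity of $\chi$, $T$ and $\Psi$ then follows from the Hölder continuity of $\tilde F$ (quasiconformal maps of $S^2$ are Hölder), of the endpoint maps $v\mapsto\xi^\pm(v)$ (the strong stable/unstable foliations of a negatively curved geodesic flow are Hölder), together with $b^{g_2}\in C^{1,\alpha}$ and $F\in C^1$.

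Finally, to see that $\chi$ is a homeomorphism I would run the symmetric construction with $E$ and $\tilde E=\tilde F^{-1}$, producing an equivariant Hölder map $\chi'\colon T^1_{g_2}(\H^3)\to T^1_{g_1}(\H^3)$ with the analogous properties; since $\tilde E\tilde F=\mathrm{id}$, the composition $\chi'\circ\chi$ preserves every $g_1$-geodesic, so $\chi'(\chi(v))=g_1^{r(v)}(v)$ for a continuous $r$, and the cocycle identities together with the strict positivity of the two time changes show that along each orbit the induced map $s\mapsto s+r(g_1^sv)$ is a strictly increasing proper reparametrization; hence $\chi'\circ\chi$ and $\chi\circ\chi'$ are homeomorphisms, so $\chi$ is a bijection with continuous inverse $\chi^{-1}=\chi'\circ(\chi\circ\chi')^{-1}$, and by (i) it descends to a homeomorphism $T^1_{g_1}(N)\to T^1_{g_2}(M)$. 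The main obstacle — and the reason for departing from the orthogonal-projection construction — is exactly the positivity $\partial_tT>0$: with the naive choice $D=0$ (the point $q_v$ on the horosphere through $F(\pi v)$) one gets only $\partial_tT(0,v)=-\langle\nabla^{g_2}b^{g_2}_{\tilde F\xi^+(v)},dF(v)\rangle_{g_2}$, which can vanish where $dF$ is singular, so the window‑average over length $D$ and the coarse monotonicity of Busemann functions along quasigeodesics are what make the time change a genuine $C^1$, strictly monotone reparametrization.
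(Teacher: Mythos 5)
Your proposal is correct and follows essentially the same route as the paper. The paper's proof also adapts Knieper's Theorem~2.12 by replacing orthogonal projections onto geodesics with Busemann-level projections; it first defines the auxiliary map $\tilde{\chi}(v)=(\sigma(\tilde t),\sigma'(\tilde t))$ where $\sigma(\tilde t)$ is on the horosphere of $F(\pi(v))$ (your ``$D=0$'' point), observes exactly your concern that $\tilde{\chi}$ may fail to be injective, and then sets $\chi(v)=g_2^{r(v)}(\tilde{\chi}(v))$ with $r(v)=\frac1\tau\int_0^\tau s(t,v)\,dt$, $s(t,v)=B_{\tilde F(\gamma_v(+\infty))}(F\gamma_v(0),F\gamma_v(t))$, and $T(t,v)=r(g_1^t v)+s(t,v)-r(v)$. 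A short computation with the Busemann cocycle shows that your direct window-average definition of $q_v$ and your formula for $T$ are algebraically identical to these (with $\tau=D$), including the identity $\partial_tT(t,v)=\frac1\tau s(\tau,g_1^tv)$ giving positivity of $\Psi$. The only cosmetic differences are where detail is supplied: you spell out the homeomorphism property via the symmetric construction with $E$ and the H\"older regularity, which the paper delegates to Knieper's argument, while the paper gives a more explicit chain of triangle-inequality estimates for item (v) than your sketch.
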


\begin{proof}
Let  $B_{\xi}(x,y)$ denote the  Busemann function for $(\H^3,g_2),$ where $\xi\in S^2$ and $x,y\in \H^3$. For  $v\in T^1_{g_1}(\H^3)$, let $\gamma_v$ be the unit speed geodesic for $(\H^3,g_1)$ with $(\gamma_v(0),\gamma'_v(0))=v$. Let $\sigma$ be the unit speed geodesic for $(\H^3,g_2)$ with $\partial_\infty\sigma=\tilde{F}(\partial_\infty\gamma_v)$.  There is a unique $\tilde t$ so that $B_{\tilde{F}(\gamma_v(+\infty))}(F(\pi(v)), \sigma(\tilde t))=0$. We define $\tilde{\chi}(v)=(\sigma(\tilde t),\sigma'(\tilde t))$. The map $\tilde{\chi}$ is continuous,  surjective, and satisfies 
$\tilde{\chi} \circ \phi=F_*(\phi)\circ \tilde{\chi}$ for any $\phi \in \Gamma_N$. Using  that the  Busemann function satisfies the co-cycle condition
$$B_{\xi}(x,y)=B_{\xi}(x,z)+B_{\xi}(z,y),$$
the same argument as in the proof of Lemma 2.12  \cite{schapira} shows that if we set $$s(t,v)=B_{\tilde{F}(\gamma_v(+\infty))}(F(\gamma_v(0)), F(\gamma_v(t))),$$ then 
$\tilde{\chi}(g_1^t(v))=g_{2}^{s(t,v)}(\tilde{\chi}(v))$ for every $t\in\R$ and $v\in T^1_{g_1}(\H^3)$. 
The function $s$ is H\"older continuous, differentiable along the $t$-direction, and $v\mapsto \partial_ts(t,v)$ is H\"older continuous.

The map $\tilde{\chi}$ may not be injective. Since $F$ is a quasi-isometry, there is $\tau>0$  such  that  $s(\tau,v)>0$ for any $v\in T^1_{g_1}(\H^3)$. Consider the function
$$r(v)=\frac{1}{\tau}\int_0^\tau s(t,v)dt,\quad v\in T^1_{g_1}(\H^3).$$
Following \cite[Theorem 2.12]{knieper}, we define 
$$\chi:T^1_{g_1}(\H^3)\rightarrow T^1_{g_2}(\H^3),\quad \chi(v)=g_2^{r(v)}(\tilde{\chi}(v)).$$
The fact that $\chi$ is a homeomorphism follows just as in  \cite{knieper}.
The function $T$ can be defined by $T(t,v)=r(g^t_1(v))+s(t,v)-r(v).$

The statements (i)-(iv) of the proposition follow from the construction. We will denote by $C$ a positive constant independent of $t,v$ that depends on the line. To check property $(v)$, notice that since $F$ is a quasi-isometry we have  $$|s(t,v)|\leq d_{g_2}(F(\gamma_v(0)),F(\gamma_v(t)))\leq C|t|+C.$$ Hence  $|r(v)|\leq C$ for every  $v\in T^1_{g_1}(\H^3)$ and it suffices to check  (v) for the map $\tilde{\chi}$. 

In the definition of  $\tilde{\chi}$,  assume that $\tilde t=0$ so $\tilde{\chi}(v)=(\sigma(0),\sigma'(0))$. From Morse's Lemma  it follows  that the Hausdorff distance between $F\circ\gamma_v$ and $\sigma$ is finite. Then $d_{g_2}(F(\gamma_v(0)),\sigma(\bar t))\leq C$ for some $\bar t\in\R$. If $\xi=\tilde{F}(\gamma_v(+\infty))$ and using that $B_{\xi}(\sigma(0),F(\gamma_v(0))=0$, 
\begin{align*}
d_{g_2}(\sigma(0),F(\gamma_v(0)))&\leq  d_{g_2}(\sigma(0),\sigma(\bar t))+d_{g_2}(\sigma(\bar t),F(\gamma_v(0)))\\
&\leq d_{g_2}(\sigma(0),\sigma(\bar t))+C=
|B_{\xi}(\sigma(0),\sigma(\bar t))|+C\\
&\leq |B_{\xi}(\sigma(0),F(\gamma_v(0)))|+|B_{\xi}(\sigma(\bar t),F(\gamma_v(0)))|+C\\
&=|B_{\xi}(\sigma(\bar t),F(\gamma_v(0)))|+C\\
&\leq d_{g_2}(F(\gamma_v(0)),\sigma(\bar t))+C\leq 2C.
\end{align*}
This finishes the proof  of the proposition.
\end{proof}

The geodesic stretch can be determined by the lengths of periodic orbits (Section 2 of \cite{schapira}). The proof of the next proposition is based on \cite[Theorem 1.4]{fathi}.  
\begin{prop}\label{quotient.geodesics} Let  $(N,g_1)$ and $(M,g_2)$ be negatively curved. Suppose $\{\gamma_i\}_{i\in\N}$ is a sequence of closed geodesics in $(N,g_1)$ equidistributed for the metric $g_1$. Then
$$i_F(g_1,g_2)=\lim_{i\to\infty}\frac{l_{g_2}(F_*[\gamma_i])}{l_{g_1}([\gamma_i])}.$$
\end{prop}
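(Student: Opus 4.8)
The plan is to reduce the statement about the geodesic stretch $i_F(g_1,g_2)$ to the corresponding statement for the identity map by using the orbit equivalence $\chi$ constructed in Proposition \ref{gromov-correspondence}, and then to invoke the known characterization of the geodesic stretch in terms of periodic orbits. Recall that the geodesic stretch is defined as a Birkhoff-type average $i_F(g_1,g_2)=\mathrm{vol}_{g_1}(T^1_{g_1}N)^{-1}\lim_{t\to\infty}\int_{T^1_{g_1}N}\frac{a(t,v)}{t}\,dV_{g_1}(v)$, where $a(t,v)=d_{g_2}(F(\gamma_v(0)),F(\gamma_v(t)))$ for a lift. The first step is to replace $a(t,v)$ by the cocycle $s(t,v)=B_{\tilde F(\gamma_v(+\infty))}(F(\gamma_v(0)),F(\gamma_v(t)))$ from the proof of Proposition \ref{gromov-correspondence}: property (v) of that proposition, together with Morse's Lemma, gives $|a(t,v)-s(t,v)|\leq C$ uniformly in $t,v$, so the two have the same Birkhoff averages and the same integral against periodic orbits. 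Thus $i_F(g_1,g_2)$ is computed by the $\mathbb{R}$-cocycle $s$ over the geodesic flow $g_1^t$, which is additive: $s(t_1+t_2,v)=s(t_1,v)+s(t_2,g_1^{t_1}(v))$.

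Next I would record the periodic-orbit formula for cocycle averages. For a Hölder cocycle $s$ over a topologically mixing Anosov flow, the ergodic average $\frac{1}{\mathrm{vol}}\int \Psi\,dV_{g_1}$ (where $\Psi(v)=\partial_t s(0,v)$) equals the limit of $\frac{\int_{\gamma_i}\Psi}{l_{g_1}(\gamma_i)}$ along any sequence of closed $g_1$-geodesics that equidistributes for the Liouville measure $dV_{g_1}$ — this is exactly the content of \cite[Theorem 1.4]{fathi} (and is the reason the proposition cites it), and it is the same mechanism used in Section 2 of \cite{schapira}. The quantity $\int_{\gamma_i}\Psi$ is, by the fundamental theorem of calculus along the orbit and periodicity, equal to $s(l_{g_1}(\gamma_i),v_i)$ for $v_i$ any point on $\gamma_i$; and $s(l_{g_1}(\gamma_i),v_i)=B_{\tilde F(\gamma_i(+\infty))}(F(\gamma_i(0)),F(\gamma_i(l_{g_1}(\gamma_i))))$. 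Here $F(\gamma_i(0))$ and $F(\gamma_i(l_{g_1}(\gamma_i)))$ differ by the deck transformation $F_*(\phi_i)$, where $\phi_i\in\Gamma_N$ is the hyperbolic element corresponding to $[\gamma_i]$, so this Busemann increment along the axis of $F_*(\phi_i)$ is precisely the translation length of $F_*(\phi_i)$ in $(\H^3,g_2)$, i.e.\ $l_{g_2}(F_*[\gamma_i])$ up to an additive error bounded independently of $i$. Dividing by $l_{g_1}([\gamma_i])$ and letting $i\to\infty$, the bounded errors wash out and we obtain $\lim_i \frac{l_{g_2}(F_*[\gamma_i])}{l_{g_1}([\gamma_i])}=\frac{1}{\mathrm{vol}_{g_1}(T^1_{g_1}N)}\int_{T^1_{g_1}N}\Psi\,dV_{g_1}=i_F(g_1,g_2)$, where the last equality again uses that the Birkhoff average of $\Psi$ computes $i_F(g_1,g_2)$ (via $|a-s|\leq C$).

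There is one bookkeeping point that needs care: the identification of the Busemann increment $B_{\tilde F(\gamma_i(+\infty))}(F(\gamma_i(0)),F_*(\phi_i)F(\gamma_i(0)))$ with the $g_2$-translation length $l_{g_2}(F_*[\gamma_i])$. The point $\tilde F(\gamma_i(+\infty))$ is the attracting fixed point of $F_*(\phi_i)$ on $S^2$ (because $\tilde F$ conjugates $\phi_i$ to $F_*(\phi_i)$ and $\gamma_i(+\infty)$ is the attracting fixed point of $\phi_i$), hence it is one endpoint of the $g_2$-axis of $F_*(\phi_i)$; for a point $x$ on that axis the identity $B_{\xi^+}(x,F_*(\phi_i)x)=-l_{g_2}(F_*[\gamma_i])$ is immediate, and for $x=F(\gamma_i(0))$ at bounded $g_2$-distance from the axis the error is controlled by the cocycle property of $B$ exactly as in the last display of the proof of Proposition \ref{gromov-correspondence}. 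I expect this Busemann-to-length identification, and verifying that all the additive constants are genuinely uniform in $i$, to be the main technical obstacle; the ergodic-theoretic input (equidistribution $\Rightarrow$ convergence of cocycle averages) is essentially quoted from \cite{fathi}, and the reduction from $a$ to $s$ is the bounded-difference estimate already in hand from Proposition \ref{gromov-correspondence}(v).
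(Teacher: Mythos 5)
Your proposal is correct and follows essentially the same route as the paper: compute the geodesic stretch via the derivative $\Psi$ of an additive H\"older cocycle over the $g_1$-flow, pass from $a(t,v)$ to the cocycle using the uniform bound from Proposition~\ref{gromov-correspondence}(v), and evaluate the cocycle on closed orbits to recover $l_{g_2}(F_*[\gamma_i])$; the only cosmetic difference is that you work with the Busemann cocycle $s$ while the paper works with $T$ (which differs from $s$ by the bounded coboundary $r\circ g_1^t-r$, hence has the same periodic-orbit integrals and Birkhoff average), and the paper derives the needed identities directly from flow-invariance of $dV_{g_1}$ rather than quoting \cite{fathi} as a black box. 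The ``main technical obstacle'' you flag is in fact a non-issue: for a hyperbolic isometry $\psi$ with attracting fixed point $\xi^+$, the cocycle identity $B_{\xi^+}(x,\psi x)=B_{\xi^+}(p,\psi p)$ for any $p$ (using $\psi$-invariance of $B_{\xi^+}$) shows $B_{\xi^+}(x,\psi x)=l_{g_2}(\psi)$ \emph{exactly}, independently of $x$, so there is no additive error to control; the paper gets the same exactness automatically from the equivariance of $\chi$, which sends a periodic $g_1$-orbit of period $l_{g_1}(\gamma_i)$ to the periodic $g_2$-orbit of $F_*(\phi_i)$, giving $\int_0^{l_{g_1}(\gamma_i)}\Psi(\gamma_i,\gamma_i')\,dt=l_{g_2}(\sigma_i)$ on the nose. (Minor sign note: with the paper's Busemann convention, which makes $s(t,v)>0$ for $t$ large, one has $B_{\xi^+}(x,\psi x)=+l_{g_2}(\psi)$, not $-l_{g_2}(\psi)$.)
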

\begin{proof}
Consider  $\chi,T,\Psi$ as in Proposition \ref{gromov-correspondence}. The functions $T$ and $\Psi$ are equivariant for the isomorphism $F_*:\Gamma_N\rightarrow \Gamma_M$, and hence  induce functions  $T:\R\times T^1_g(N)\rightarrow \mathbb{R}$ and $\Psi:T^1_g(N)\rightarrow \mathbb{R}$.

Let  $\sigma_i$ be the closed geodesic of $(M,g_2)$ in the conjugacy class $F_{*}[\gamma_i]$. Let $v_i\in T^1_{g_1}(\H^3)$ so that  $\tilde\gamma_i(t)=\pi(g_1^t(v_i))$ is a unit speed geodesic in $(\H^3,g_1)$ that projects to $\gamma_i$ in  $N$ under the quotient map. Then $\tilde \sigma_i(s)=\pi(g_2^s(\chi(v_i)))$ is a unit speed geodesic that projects to $\sigma_i$ in $M$. Since $\chi(g_1^t(v))=g_2^{
T(t,v)}(\chi(v))$, it follows that
\begin{equation}\label{length}
l_{g_2}(\sigma_i)=\int_0^{l_{g_1}(\gamma_i)}|\frac{d}{dt}\tilde\sigma_i(T(t,v_i))|dt=\int_0^{l_{g_1}(\gamma_i)}\Psi(\gamma_i(t),\gamma_i'(t))dt.
\end{equation}
Thus we obtain from  \eqref{equidis} that
\begin{eqnarray}\label{equidis1}
\nonumber\lim_{i\to\infty}\frac{l_{g_2}(F_*[\gamma_i])}{l_{g_1}([\gamma_i])}&=&\lim_{i\to\infty}\frac{1}{l_{g_1}(\gamma_i)}\int_0^{l_{g_1}(\gamma_i)}\Psi(\gamma_i(t),\gamma_i'(t))dt\\
&=&\frac{1}{\text{\rm vol}_{g_1}(T^1_{g_1}(N)}\int_{T^1_{g_1}(N)}\Psi dV_{g_1}.
\end{eqnarray}

For $v\in T^1_{g_1}(N)$, consider $\tilde{v}\in T^1_{g_1}(\H^3)$ which projects to $v$. We have 
$a(t,v)=d_{g_2}(F(\pi(g_1^t(\tilde v))),F(\pi(\tilde{v})))$, and from Proposition \ref{gromov-correspondence} (iii) $$T(t,v)=T(t,\tilde{v})=d_{g_2}(\pi(\chi(g_1^t(\tilde v))),\pi(\chi(\tilde v))).$$ 
From Proposition \ref{gromov-correspondence} (v), there is a positive constant $C$ so that $|a(t,v)-T(t,v)|\leq C$ for any  $t\in \R$ and $v\in T^1_{g_1}(N)$. Hence
\begin{equation}\label{last.identity}
 i_F(g_1,g_2)=\frac{1}{\text{\rm vol}(T^1_{g_1}(N))}\lim_{t\to\infty}\int_{T^1_{g_1}(N)}\frac{T(t,v)}{t}dV_{g_1}(v).
 \end{equation}
From Proposition \ref{gromov-correspondence} (iii),  $$T(t+s,v)=T(t,g_1^s(v))+T(s,v)$$ and hence $\frac{d}{dt}T(t,v)=\Psi(g_1^t(v))$. Therefore
$$\int_{T^1_{g_1}(N)}\frac{T(t,v)}{t}dV_{g_1}(v)=\int_{T^1_{g_1}(N)}\frac{1}{t}\int_0^t\Psi(g_1^s(v))ds dV_{g_1}(v)=\int_{T^1_{g_1}(N)}\Psi dV_{g_1},$$
where we used that  $dV_{g_1}$ is invariant under the geodesic flow. The proposition follows by combining this  with \eqref{equidis1} and \eqref{last.identity}.
\end{proof}

The next proposition was proven in \cite{knieper-stretch} when $F$ is the identity map. The proof  presented here uses Abramov's formula. This generalizes Theorem \ref{intersection.number.thm} (ii).
\begin{prop}\label{stretch.entropy} Let $g$ be a metric of negative sectional curvature  on $N$. Then
$$I_{\Gamma_M}(\nu,(\tilde{F})_{*}\lambda_g)=2\pi^2\text{\rm vol}_g(N)i_F(g,\overline{g}_M)\geq\pi^2 \text{\rm vol}_g(N)h_L(g).$$
\end{prop}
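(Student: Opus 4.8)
The plan is to prove the two assertions separately: the identity by transcribing the proof of Theorem~\ref{intersection.number.thm}(ii) through the boundary map $\tilde F$, and the inequality by applying Abramov's formula to the orbit equivalence $\chi$ of Proposition~\ref{gromov-correspondence}.

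\emph{The identity.} First I would invoke \cite[Theorem~1]{sigmund} to produce a sequence $\{\gamma_i\}_{i\in\N}$ of closed geodesics of $(N,g)$ that becomes equidistributed for $g$ in the sense of \eqref{equidis}. Proposition~\ref{convergence.geodesic}, applied to $\Gamma_N$ and the metric $g$, then gives $l_g(\gamma_i)^{-1}\delta_{[\gamma_i]}\to(2\pi\text{\rm vol}_g(N))^{-1}\lambda_g$ in $\mathcal C(\Gamma_N)$. Since $\tilde F$ induces a homeomorphism of $\mathcal G$, the pushforward $(\tilde F)_*$ is continuous for convergence of Radon measures, and $(\tilde F)_*\delta_{[\gamma_i]}=\delta_{F_*[\gamma_i]}$ (the curve analogue of $(\tilde F)_*\delta_{\Pi_N}=\delta_{F_*(\Pi_N)}$), so $l_g(\gamma_i)^{-1}\delta_{F_*[\gamma_i]}\to(2\pi\text{\rm vol}_g(N))^{-1}(\tilde F)_*\lambda_g$ in $\mathcal C(\Gamma_M)$. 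Combining Proposition~\ref{continuity.form} (continuity of $I_{\Gamma_M}$ at $(\nu,\lambda)$ for any $\lambda\in\mathcal C(\Gamma_M)$), Proposition~\ref{length.form} (giving $I_{\Gamma_M}(\nu,\delta_{F_*[\gamma_i]})=\pi\,l_{\overline{g}_M}(F_*[\gamma_i])$), and Proposition~\ref{quotient.geodesics} (giving $l_{\overline{g}_M}(F_*[\gamma_i])/l_g(\gamma_i)\to i_F(g,\overline{g}_M)$), one obtains
\[
(2\pi\text{\rm vol}_g(N))^{-1}I_{\Gamma_M}(\nu,(\tilde F)_*\lambda_g)=\lim_{i\to\infty}\frac{I_{\Gamma_M}(\nu,\delta_{F_*[\gamma_i]})}{l_g(\gamma_i)}=\pi\,i_F(g,\overline{g}_M),
\]
that is, $I_{\Gamma_M}(\nu,(\tilde F)_*\lambda_g)=2\pi^2\text{\rm vol}_g(N)\,i_F(g,\overline{g}_M)$.

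\emph{The inequality.} It suffices to show $i_F(g,\overline{g}_M)\ge\tfrac12 h_L(g)$. Let $\chi,T,\Psi$ be as in Proposition~\ref{gromov-correspondence} for $(N,g_1)=(N,g)$ and $(M,g_2)=(M,\overline{g}_M)$, and set $b:=\Psi\circ\chi^{-1}$, a positive H\"older function on the compact space $T^1_{\overline{g}_M}(M)$. The conjugate flow $\mathfrak h^t:=\chi\circ g^t\circ\chi^{-1}$ on $T^1_{\overline{g}_M}(M)$ has the $\overline{g}_M$-geodesics as its orbits (Proposition~\ref{gromov-correspondence}(ii)), and since $\Psi>0$ the map $t\mapsto T(t,v)$ is an increasing $C^1$ diffeomorphism of $\R$; using $\partial_t T(t,v)=\Psi(g^t v)$ one checks that $\mathfrak h^t$ is exactly the time change of the $\overline{g}_M$-geodesic flow $(\overline{g}_M)^t$ with speed $b$, i.e.\ (in Abramov's normalization) $\mathfrak h^t(w)=(\overline{g}_M)^{\tau_t(w)}(w)$ with $\int_0^{\tau_t(w)}b^{-1}\!\big((\overline{g}_M)^s w\big)\,ds=t$. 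The normalized Liouville measure $\hat\nu_g$ of $g$ is $g^t$-invariant with $h_{\hat\nu_g}(g^1)=h_L(g)$, so $\eta:=\chi_*\hat\nu_g$ is an $\mathfrak h^t$-invariant probability with $h_\eta(\mathfrak h^1)=h_L(g)$. Put $\mu:=i_F(g,\overline{g}_M)^{-1}\,b\,\eta$; this is an $(\overline{g}_M)^t$-invariant probability measure, because $\int b\,d\eta=\int_{T^1_g(N)}\Psi\,d\hat\nu_g=i_F(g,\overline{g}_M)$ by the geodesic-stretch identity derived in the proof of Proposition~\ref{quotient.geodesics}. Abramov's formula then gives
\[
h_L(g)=h_\eta(\mathfrak h^1)=\Big(\int b^{-1}\,d\mu\Big)^{-1}h_\mu\big((\overline{g}_M)^1\big)=i_F(g,\overline{g}_M)\,h_\mu\big((\overline{g}_M)^1\big).
\]
Since the geodesic flow of the hyperbolic metric $\overline{g}_M$ on the closed three-manifold $M$ has topological entropy equal to the volume-growth entropy of $\H^3$, namely $2$, the variational principle yields $h_\mu\big((\overline{g}_M)^1\big)\le 2$; hence $h_L(g)\le 2\,i_F(g,\overline{g}_M)$, and combining with the identity already proved gives $I_{\Gamma_M}(\nu,(\tilde F)_*\lambda_g)=2\pi^2\text{\rm vol}_g(N)\,i_F(g,\overline{g}_M)\ge\pi^2\text{\rm vol}_g(N)\,h_L(g)$.

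\emph{Main obstacle.} The delicate step is the Abramov computation: one must confirm that $\mathfrak h^t$ is a complete time change (this uses $\Psi>0$ on the compact space $T^1_g(N)$, so that $b,b^{-1}$ are bounded and $T(\cdot,v)$ surjects onto $\R$), track the normalizations so that the time-change density integrates against $\eta$ to exactly $i_F(g,\overline{g}_M)$, and observe that although $\chi$ is merely H\"older it is still a homeomorphism, so entropy is preserved under the conjugacy $\eta=\chi_*\hat\nu_g$ and the speed function $b^{-1}$ is continuous and bounded --- which is all Abramov's formula requires. By comparison, the identity is a routine transcription of the argument for Theorem~\ref{intersection.number.thm}(ii) through $\tilde F$.
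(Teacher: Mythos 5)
Your proof is correct and follows essentially the same route as the paper: the identity is obtained exactly as in the text (Sigmund + Proposition~\ref{convergence.geodesic} + continuity + Propositions~\ref{length.form} and \ref{quotient.geodesics}), and the inequality is the same Abramov argument --- you simply push the reparametrized flow and the measure across $\chi$ to $T^1_{\overline{g}_M}(M)$ and apply the variational principle there (your $\mu = i_F^{-1}b\,\eta$ is precisely $\chi_*\mu_\Psi$, so $h_\mu((\overline{g}_M)^1)\le 2$ is literally the paper's $h_{\mu_\Psi}(U)\le h_{top}(U)=2$). The bookkeeping in your ``main obstacle'' paragraph --- positivity of $\Psi$, completeness of the time change, invariance of entropy under the homeomorphism $\chi$, and the identity $\int\Psi\,d\hat\nu_g=i_F(g,\overline{g}_M)$ from \eqref{intersection.identity} --- is exactly what the paper uses.
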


\begin{proof}

Recall the  homeomorphism $\chi:T^1_g(\H^3)\rightarrow T^1(\H^3)$ given by Proposition \ref{gromov-correspondence} with $g_1=g$ and $g_2=\overline{g}_M$.  Consider the functions $T$ and $\Psi$ provided by that proposition. We  denote the induced homeomorphism on the quotients by  $\chi:T^1_g(N) \rightarrow T^1_{\overline{g}_M}(M)$. The functions $T$ and $\Psi$ are equivariant for the isomorphism $F_*:\Gamma_N\rightarrow \Gamma_M$, and hence  induce functions  $T:\R\times T^1_g(N)\rightarrow \mathbb{R}$ and $\Psi:T^1_g(N)\rightarrow \mathbb{R}$. 

From \cite{sigmund}, there is a sequence $\{\gamma_i\}_{i\in\N}$ of closed geodesics of $(N,g)$ which becomes equidistributed for the metric $g$ (see \eqref{equidis}). The Proposition \ref{convergence.geodesic} implies that $\frac{1}{l_g(\gamma_i)}\delta_{[\gamma_i]}\rightarrow \frac{1}{2\pi \text{\rm vol}_g(N)}\lambda_g$.  As in the case of surfaces, $\tilde{F}_*(\delta_{[\gamma_i]})=\delta_{F_*([\gamma_i])}$.  Therefore
$\frac{1}{l_g(\gamma_i)}\delta_{F_*([\gamma_i])}\rightarrow \frac{1}{2\pi \text{\rm vol}_g(N)}\tilde{F}_*(\lambda_g),$ and hence by Proposition \ref{continuity.form},  
$$
(2\pi \text{\rm vol}_g(N))^{-1}I_{\Gamma_M}(\nu,(\tilde{F})_*\lambda_g)=\lim_{i\to\infty}\frac{I_{\Gamma_M}(\nu,\delta_{F_{*}[\gamma_i]})}{l_g(\gamma_i)}.
$$
By Proposition \ref{length.form} and Proposition \ref{quotient.geodesics},
$$
(2\pi \text{\rm vol}_g(N))^{-1}I_{\Gamma_M}(\nu,(\tilde{F})_*\lambda_g)=\pi \lim_{i\to\infty}\frac{l_{\overline{g}_M}(F_*([\gamma_i]))}{l_g(\gamma_i)}=\pi i_F(g,\overline{g}_M).
$$
This proves $I_{\Gamma_M}(\nu,(\tilde{F})_{*}\lambda_g)=2\pi^2\text{\rm vol}_g(N)i_F(g,\overline{g}_M)$.

 Consider  the flow $\{U_s\}_{s\in\R}$ on $T^1_g(N)$ defined by $U_s(v)=\chi^{-1}(\overline{g}_M^s(\chi(v)))$. This corresponds to a reparametrization  of the geodesic flow on $T^1_g(N)$. For  a unit measure $\mu$ on $T^1_g(N)$  that is invariant under  the flow $U=\{U_s\}_{s\in\R}$, the entropy of $U$ with respect to $\mu$ is denoted by $h_\mu(U)$. Let $\tau:\R\times T^1_g(N)\rightarrow \R$ be such that $U_s(v)=g^{\tau(s,v)}(v)$.  Then
 $$T(\tau(s,v),v)=\tau(T(s,v),v)$$
for any  $s\in\R, v\in  T^1_g(N$).
 The function $\tau$ is continuous and  differentiable along the $s$-direction. Define  a unit measure $\mu_\Psi$ on $T^1_g(N)$   by
 $$\mu_\Psi(\psi)=\frac{\int_{T^1_g(N)}\psi\Psi dV_g}{\int_{T^1_g(N)}\Psi dV_g}, \quad \psi\in C(T^1_g(N)).$$
Since  $\Psi(v)\frac{d}{ds}\tau(0,v)=1$ for $v\in T^1_g(N)$, the measure $\mu_{\Psi}$ is preserved  by $\{U_s\}_{s\in\R}$. From Abramov's formula \cite{abramov}, 
\begin{equation}\label{abramov}
\frac{h_L(g)}{h_{\mu_\Psi}(U)}=\frac{1}{\text{\rm vol}(T^1_g(N))}\int_{T^1_g(N)}\Psi dV_g.
\end{equation}

Notice that it follows from the  proof of Proposition \ref{quotient.geodesics}  that
\begin{equation}\label{intersection.identity}
i_F(g,\overline{g}_M)=\frac{1}{\text{\rm vol}(T^1_g(N))}\int_{T^1_g(N)}\Psi dV_g.
\end{equation}
Hence $h_L(g)=i_F(g,\overline{g}_M) h_{\mu_\Psi}(U).$
The flow $U=\{U_s\}$ is topologically conjugate to the geodesic flow of $\overline{g}_M$ on $M$ by the map $\chi$. Hence the topological entropy $h_{top}(U)$ of  $\{U_s\}_{s\in\R}$ is equal to $2$. The variational principle implies  that $h_{\mu_\Psi}(U)\leq h_{top}(U)=2$. Therefore $h_L(g)\leq 2i_F(g,\overline{g}_M)$, which finishes the proof of the proposition.

\end{proof}

\begin{proof}[Proof of Theorem \ref{mostow}]
 Let us  suppose that $\text{\rm vol}_{\overline{g}_M}(M)\leq \text{\rm vol}_{\overline{g}_N}(N)$.
Consider a sequence  $\{\Pi_i\in S_{\eta_i}(M)\}_{i\in\N}$ of homotopy classes of quasi-Fuchsian surfaces in $M$, $\eta_i\rightarrow 0$,  which becomes equidistributed in the sense described in \eqref{equidis.defi}. Such sequences can  be constructed as in Proposition 6.1 of \cite{lowe-neves}. Since $\text{area}_{\overline{g}_M}(\Pi_i)^{-1}{\delta_{\Pi_i}}\rightarrow {(2\pi\text{\rm vol}_{\overline{g}_M}(M))}^{-1}{\nu}$ by Proposition \ref{convergence}, it follows by \eqref{new.4} and  Proposition \ref{continuity.form} that  
\begin{eqnarray*}
\frac{I_{\Gamma_M}(\nu,(\tilde{F})_*\lambda_{\overline{g}_N})}{2\pi^2\text{\rm vol}_{\overline{g}_M}(M)}&=&\frac 1 \pi \lim_i I_{\Gamma_M}\left(\frac{\delta_{\Pi_i}}{\text{area}_{\overline{g}_M}(\Pi_i)},(\tilde{F})_*\lambda_{\overline{g}_N}\right)\\
&\leq&\liminf_i\frac{\text{area}_{\overline{g}_N}(E_*(\Pi_i))}{\text{area}_{\overline{g}_M}(\Pi_i)}.
\end{eqnarray*}
Since $h_L(\overline{g}_N)=2,$  by Proposition \ref{stretch.entropy} it follows that 
$I_{\Gamma_M}(\nu,(\tilde{F})_{*}\lambda_{\overline{g}_N})\geq 2\pi^2 \text{\rm vol}_{\overline{g}_N}(N)$ and hence
\begin{equation}\label{lower.bound.mostow}
1\leq  \frac{\text{\rm vol}_{\overline{g}_N}(N)}{\text{\rm vol}_{\overline{g}_M}(M)}\leq \liminf_i\frac{\text{area}_{\overline{g}_N}(E_*(\Pi_i))}{\text{area}_{\overline{g}_M}(\Pi_i)}.
\end{equation} 

Let $\tilde{\Sigma}_i\in E_*(\Pi_i)$  be such that  ${\rm area}_{\overline{g}_N}(\tilde{\Sigma}_i)={\rm area}_{\overline{g}_N}(E_*(\Pi_i))$, and let $g_i$ be the genus of $\tilde{\Sigma}_i$.
Then as in the proof of Proposition \ref{alternative.definition},
\begin{equation}\label{gauss.mostow}
\frac{1}{\text{area}_{\overline{g}_N}(\tilde{\Sigma}_i)}\int_{\tilde{\Sigma}_i}\frac{|A_{\tilde{\Sigma}_i}|^2}{2}dA_{\overline{g}_N}=\frac{4\pi({g_i}-1)}{\text{area}_{\overline{g}_N}(\tilde{\Sigma}_i)}-1,
 \end{equation}
where $A_{\tilde{\Sigma}_i}$ is the second fundamental form of $\tilde{\Sigma}_i$ in the metric $\overline{g}_N$. Let $\Sigma_i\in \Pi_i$ be such that ${\rm area}_{\overline{g}_M}(\Sigma_i)={\rm area}_{\overline{g}_M}(\Pi_i)$. Since the genus of $\Sigma_i$ is $g_i$,  
\begin{equation}\label{seppi.genus}
\frac{1}{\text{area}_{\overline{g}_M}(\Sigma_i)}\int_{\Sigma_i}\frac{|A_{\Sigma_i}|^2}{2}dA_{\overline{g}_M}=\frac{4\pi({g_i}-1)}{\text{area}_{\overline{g}_M}(\Sigma_i)}-1,
 \end{equation}
 where $A_{\Sigma_i}$ is the second fundamental form of $\Sigma_i$ in the metric $\overline{g}_M$.
By \cite{seppi}, $|A_{\Sigma_i}|\leq C \ln (1+\eta_i)$.
  Combining  \eqref{lower.bound.mostow}, \eqref{gauss.mostow} and (\ref{seppi.genus}),
 \begin{equation}\label{thm6.1.condition}
\lim_{i\to\infty} \frac{1}{\text{area}_{\overline{g}_N}(\tilde{\Sigma}_i)}\int_{\tilde{\Sigma}_i}|A_{\tilde{\Sigma}_i}|^2dA_{\overline{g}_N}=0.
\end{equation}

We will use  Theorem 6.1 of \cite{calegari-marques-neves},  which relies on the Ratner-Shah \cite{ratner,shah} classification theorem in homogeneous dynamics, to prove  the map $\tilde{E}$ sends round circles to round circles.
Consider   an $E_*(G_i)$-invariant minimal disk $\Omega_i\subset \H^3$ so that $\Omega_i/E_*(G_i)=\tilde{\Sigma}_i,$ where $G_i<\Gamma_M$ is the group induced by $\Sigma_i\rightarrow M$. 
Since  the sequence $\{\Pi_i\}_{i\in\N}$ becomes equidistributed, we  proceed as in the proof of  Corollary 6.2
of \cite{calegari-marques-neves} to conclude the existence of $\phi_i\in \Gamma_M$  such that, after passing to a subsequence, it follows that
\begin{itemize}
\item[(a)] $\Lambda(\phi_iG_i\phi_i^{-1})$ converges  in Hausdorff distance, as $i\to\infty$, to $\sigma\in QC_0$;
\item[(b)]  $\{\phi(\sigma):\phi\in\Gamma_M\}$ is dense in $QC_0$;
\item[(c)]  for any $R>0$,
$$\lim_{i\to\infty}\int_{E_*(\phi_i)(\Omega_i)\cap B_R(0)}|A|^2dA_{\overline{g}_N}=0.$$
\end{itemize}

The sequence $E_*(\phi_i)(\Omega_i)$ has bounded curvature and hence converges  on compact sets, after passing to a subsequence, to a totally geodesic disk  $\Omega\subset \H^3$ such that $\partial_{\infty}\Omega=\tilde{E}(\sigma)$, where $\sigma$ is the round circle of (a). Therefore $\tilde{E}(\sigma)$ is  a round  circle. Since $\tilde{E}\circ \phi=E_*(\phi)\circ \tilde{E}$ for any $\phi\in \Gamma_M$, and $E_*(\phi)$ acts on $S^2$ by a conformal transformation, it follows that $\tilde{E}(\phi(\sigma))$ is a round circle for any $\phi \in \Gamma_M$.  Using (b) it follows that $\tilde{E}$ maps any round circle to a round circle.

By Carath\'eodory  \cite{caratheodory} the map $\tilde{E}$ is a M\"obius transformation. Hence $\tilde{F}$ is a M\"obius transformation and there is a unique $G\in \text{Isom}(\H^3)$ so that $G_{|S^2}=\tilde{F}$. Therefore $G\circ\phi= F_*(\phi)\circ G$ for every $\phi\in\Gamma_N$, and hence the map $G$ induces a map $G:N\rightarrow M$ which is  an isometry  homotopic to $F$. This finishes the proof of Theorem \ref{mostow}.
\end{proof}

\medskip

We now argue using Mostow's Rigidity Theorem that the entropies $E,E_{\overline{\nu}}$ of a Riemannian metric $g$ on $M$ do not depend 
on the hyperbolic metric. Let $\overline{g}_1$, $\overline{g}_2$ be hyperbolic metrics on $M$. Let $F:(M,\overline{g}_1)\rightarrow (M,\overline{g}_2)$ be an isometry that is homotopic to the identity map. If $\Pi$ is a homotopy class of surfaces in $M$ and $\Sigma\in \Pi$, then this implies  $F(\Sigma)\in \Pi$. Since ${\rm area}_{\overline{g}_2}(F(\Sigma))={\rm area}_{\overline{g}_1}(\Sigma),$ it follows that ${\rm area}_{\overline{g}_2}(\Pi)={\rm area}_{\overline{g}_1}(\Pi)$. 

The restriction $\tilde{F}:S^2 \rightarrow S^2$ to $S^2$ of the lift of $F$ to $\H^3$ is a conformal map. Hence 
the homotopy class $\Pi$ is $(1+\eta)$-quasi-Fuchsian for $\overline{g}_1$ if and only if it is $(1+\eta)$-quasi-Fuchsian for $\overline{g}_2$. Therefore, by Proposition \ref{alternative.definition},  $E(g)$ does not depend on whether the hyperbolic metric is $\overline{g}_1$ or $\overline{g}_2$. 
The Liouville measures on $Gr_2(M)$ satisfy $F_*(\overline{\nu}_1)=\overline{\nu}_2$, where we are denoting by $F$ also the induced action on $Gr_2(M)$. 
Hence   by (\ref{alternative.entropy.mu}) this is true for  $E_{\overline{\nu}}(g)$.

\bibliographystyle{amsbook}

\begin{thebibliography}{99}

\bibitem{abramov}L. Abramov, {\em On the entropy of a flow.} Doklady Akademii Nauk SSSR 128 (1959), 873--875.

 
 \bibitem{anderson} M. Anderson, {\em Complete minimal hypersurfaces in hyperbolic n-manifolds,} Comment. Math. Helvetici 58 (1983), 264--290.






\bibitem{benedetti} R. Benedetti, C. Petronio, {\em Lectures on hyperbolic geometry.}  Universitext. Springer-Verlag, Berlin, 1992.


\bibitem{BCG} G. Besson, G. Courtois, and S. Gallot, {\em  Entropies et rigidites des espaces localement symetriques de courbure strictement negative.} Geom. Funct. Anal. 5 (1995), 731--799.

\bibitem{BCG-applications} G. Besson, G. Courtois, and S. Gallot, {\em Lemme de Schwarz r\'{e}el et applications g\'{e}om\'{e}triques,} Acta Math. 183 (1999), 145--169.





\bibitem{bonahon} F. Bonahon, {\em  Bouts des vari\'et\'es hyperboliques de dimension 3,} Ann. Math.  124 (1986), 71--158.




\bibitem{bowditch} B. Bowditch, {\em Some results on the geometry of convex hulls in manifolds of pinched negative curvature.} Comment. Math. Helv. 69 (1994), 49--81.










\bibitem{brody-reyes}
N. Brody and E. Reyes, {\em Approximating hyperbolic lattices by cubulations.}
arXiv:2404.01511 [math.GR] (2024)

\bibitem{calegari-marques-neves} D. Calegari, F. C. Marques, and A. Neves, {\em Counting minimal surfaces in negatively curved 3-manifolds.}  Duke Math. J. 171 (8) (2022), 1615--1648.

\bibitem{caratheodory} C. Carath\'eodory,  {\em The most general transformations of plane regions which transform circles into circles.} Bull. Amer. Math. Soc. 43 (8) (1937), 573--579.



\bibitem{croke-fathi} C. Croke and A. Fathi,
{\em An inequality between energy and intersection.}
Bull. London Math. Soc. 22 (1990), 489--494.

\bibitem{fathi} A. Fathi and L. Flaminio,
{\em Infinitesimal conjugacies and Weil-Petersson metric.} 
Ann. Inst. Fourier 43 (1993), 279--299.



\bibitem{gromov-mostow} M. Gromov,  {\em Hyperbolic manifolds (according to Thurston and J{\o}rgensen).} Bourbaki Seminar, Vol. 1979/80, 40--53, Lecture Notes in Math., 842, Springer, Berlin-New York, 1981.

\bibitem{gromov2} M. Gromov, {\em Foliated Plateau problem, part I: Minimal varieties.} Geom. Funct. Anal. 1 (1991), 14--79.










\bibitem{huang-lowe-seppi}
Z. Huang, B. Lowe and A. Seppi, 
{\em Uniqueness and non-uniqueness for the asymptotic Plateau problem in hyperbolic space.}
arXiv:2309.00599 [math.DG] (2023)
 
 \bibitem{jiang} R. Jiang, {\em Counting essential minimal surfaces in closed negatively curved n-manifolds}, arXiv:2108.01796 (2021)

\bibitem{kahn-markovic2} J. Kahn and V. Markovic,
{\em Immersing almost geodesic surfaces in a closed hyperbolic three manifold.}
Ann. of Math.  175 (2012), 1127--1190. 

\bibitem{kahn-markovic} J. Kahn and V. Markovic, 
{\em Counting essential surfaces in a closed hyperbolic $3$-manifold} Geom. Topol. 16 (2012), 601--624. 

\bibitem{kahn-markovic-smilga} J. Kahn, V. Markovic and I. Smilga,
{\em Geometrically and topologically random surfaces in a closed hyperbolic three manifold,}
arXiv:2309.02847 [math.GT] (2023)

\bibitem{kapovich-nagnibeda}
I. Kapovich and T. Nagnibeda,
{\em Subset currents on free groups,}
Geometriae Dedicata 166 (2013), 307--348.

\bibitem{katok} A. Katok, {\em Entropy and closed geodesics.} Ergodic Theory Dynam. Systems 2 (1982), 339--365.

\bibitem{katok-knieper-weiss} A. Katok, G. Knieper, H. Weiss, {\em Formulas for the derivative
and critical points of topological entropy for Anosov and geodesic flows,}
Comm. Math. Phys. 138 (1991), 19--31



\bibitem{knieper-stretch} G. Knieper,
{\em Volume growth, entropy and the geodesic stretch.}
Math. Res. Lett. 2 (1995), 39--58.

\bibitem{knieper} G. Knieper, 
{\em Hyperbolic dynamics and Riemannian geometry.} Handbook of dynamical systems, Vol. 1A, 453--545, North-Holland, Amsterdam, 2002. 




\bibitem{labourie} F. Labourie, {\em Asymptotic counting of minimal surfaces in hyperbolic manifolds [according to Calegari, Marques and Neves].} S\'{e}minaire Bourbaki, 1179  (2021).

\bibitem{lowe-neves} B. Lowe and A. Neves, {\em Minimal surface entropy and average area ratio,} arXiv:2110.09451v2 [math.DG], to appear in J. Diff. Geom.


\bibitem{MacR} C. Maclachlan and A. Reid, {\em The arithmetic of hyperbolic 3-manifolds}, Graduate Texts in Mathematics, vol. 219, Springer-Verlag, New York, 2003.






\bibitem{mostow} G. Mostow,  {\em Quasi-conformal mappings in n-space and the rigidity of the hyperbolic space form.} Publ. Math. IHES, No. 34 (1968) 53--104,












\bibitem{pesin}
Y. Pesin,
{\em Formulas for the entropy of the geodesic flow on a compact Riemannian manifold without conjugate points,}
Math. Notes 24 (4) (1978), 796--805.



\bibitem{ratner} M. Ratner, 
{\em Raghunathan's topological conjecture and distributions of unipotent flows.}
Duke Math. J. 63 (1991), 235--280.

\bibitem{santalo} L. Santal\'o, 
{\em Integral geometry and geometric probability.}
With a foreword by Mark Kac. Encyclopedia of Mathematics and its Applications, Vol. 1. Addison-Wesley Publishing Co., Reading, Mass.-London-Amsterdam, 1976. 

\bibitem{schapira} B. Schapira; S. Tapie,
{\em Regularity of entropy, geodesic currents and entropy at infinity.} 
Ann.Sci.\'Ec.Norm.Sup\'er. (4) 54 (2021), 1--68.

\bibitem{schoen} R. Schoen,  {\em Estimates for stable minimal surfaces in three dimensional manifolds.}  Ann. of Math. Stud. 103, Princeton Univ. Press, 1983.

\bibitem{schoen-yau}
R.~Schoen and S.T.~Yau, \textit{Existence of incompressible minimal surfaces and the topology of three dimensional manifolds of non-negative scalar curvature.} Ann. of Math. 110 (1979), 127--142.

\bibitem{seppi} A. Seppi, 
{\em Minimal discs in hyperbolic space bounded by a quasicircle at infinity.} 
Comment. Math. Helv. 91 (2016), 807--839. 

\bibitem{shah}
N. Shah, {\em Closures of totally geodesic immersions in manifolds of constant negative curvature.} Group theory from a geometrical viewpoint (Trieste, 1990), 718--732, World Sci. Publ., River Edge, NJ, 1991. 




\bibitem{sigmund} K. Sigmund,
{\em On the space of invariant measures for hyperbolic flows.}
Amer. J. Math. 94 (1972), 31--37.






\bibitem{thurston} W. Thurston, {\em The Geometry and Topology of Three-Manifolds.} Princeton University Notes.


\bibitem{uhlenbeck} K. Uhlenbeck,  {\em Closed minimal surfaces in hyperbolic 3-manifolds.}  Seminar on minimal submanifolds, volume 103 of Ann. of Math. Stud., pages 147--168. Princeton Univ. Press, Princeton, NJ, 1983.



\end{thebibliography}

  \end{document}